\newtheorem{thm}{Theorem}[subsection]
\newtheorem{prop}[thm]{Proposition}
\newtheorem{rem}[thm]{Remark}
\newtheorem{conj}[thm]{Conjecture}
\newtheorem{assu}[thm]{Assumption}
\newcommand{\npmod}[1]{\!\!\pmod{#1}}
\newcommand{\nnpmod}[1]{\!\!\!\!\pmod{#1}}
\newenvironment{proof}{\par\noindent{\bf[Proof]}}%
                      {$\blacksquare$\noindent\par\vspace{0.5\baselineskip}}
                      {$\blacksquare$\par\noindent}
\font\b=cmr10 scaled \magstep4
\def\bigzerou{\smash{\lower0.7ex\hbox{\b 0}}}
\def\bigastl{\smash{\lower0.7ex\hbox{\b *}}}
\def\bigastu{\smash{\lower2.7ex\hbox{\b *}}}
\def\addots{\mathinner
    {\mkern1mu\raise1pt\hbox{.}\mkern2mu
    \raise4pt\hbox{.}\mkern2mu\raise7pt\vbox{\kern7pt\hbox{.}}\mkern1mu}}
\renewcommand{\subsection}{\@startsection%
  {subsection}%
  {2}%
  {0mm}%
  {\baselineskip}%
  {-0.2\parindent}%
  {\normalfont\normalsize\upshape\bfseries}}%
\def\@dotsep{1.5}
\def\@pnumwidth{1em}
\title{Regular irreducible characters \\of a hyperspecial compact group} 
\author{Koichi Takase
        \thanks{The author is partially supported by 
                    JSPS KAKENHI Grant Number JP 16K05053}}
\date{}
\begin{document}   


%
%

\maketitle

\begin{abstract}
A parametrization of irreducible unitary representations 
associated with the regular adjoint orbits of a
hyperspecial compact subgroup of a reductive group over a non-dyadic
non-archimedean local filed 
is presented. The parametrization is given by means of (a subset of) 
the character group of certain finite abelian groups arising from the
reductive group. Our method 
is based upon Cliffod's theory and Weil representations over  
finite fields. It works under an assumption of 
the triviality of certain Schur multipliers defined for an algebraic
group over a finite field. The assumption of 
the triviality has good evidences in the case of
general linear groups and highly probable in general. 

\end{abstract}

\section{Introduction}\label{sec:introduction}

Let $F$ be a non-dyadic non-archimedean local field. 
The integer ring of $F$ is denoted by $O$ with the maximal
ideal $\frak{p}$ generated by $\varpi$. The residue class field 
$\Bbb F=O/\frak{p}$ is a finite field of $q$ elements.
Fix a continuous unitary character $\tau$ of the additive group $F$ such
that $\{x\in F\mid\tau(xO)=1\}=O$, and define an additive
character $\widehat\tau$ of $\Bbb F$ by 
$\widehat\tau(\overline x)=\tau(\varpi^{-1}x)$. 
For an integer $l>0$ put $O_l=O/\frak{p}^l$ so that $\Bbb F=O_1$. 

If a connected reductive quasi-split linear group $\bold{G}$ over
$F$ is split over an
unramified extension of $F$, then there exists a smooth affine group
scheme $G$ over $O$ such that $G{\otimes}_{O}F=\bold{G}$ and 
$G{\otimes}_{O}\Bbb F$ is a connected reductive group over $\Bbb F$. 
In this case the locally compact group $G(F)=\bold{G}(F)$ of the
$F$-rational points has an open compact subgroup $G(O)$ wich is
called a hyperspecial compact subgroup of $G(F)$ 
\cite[3.8.1]{Tits1979}. 
An important problem in the harmonic analysis on $G(F)$
is to determine the irreducible unitary representations of the compact
group $G(O)$. Such a representation $\pi$ of $G(O)$ factors through  the 
canonical group homomorphism $G(O)\to G(O_r)$ for some
$r>0$ since the canonical group homomorphism is surjective due to 
the smoothness of the group scheme $G$ over $O$ and Hensel's
lemma, and $\pi$ is trivial on the kernel of the canonical group
homomorphism for some $r>0$. 
Hence the problem is reduced to determine the set 
$\text{\rm Irr}(G(O_r))$ of the equivalence classes of the 
irreducible complex representations of the finite group $G(O_r)$. 

This problem in the case $r=1$, that is the representation theory of
the finite reductive group $G(\Bbb F)$, has been studied extensively,
starting 
from Green \cite{Green1955} concerned with $GL_n(\Bbb F)$ to the
decisive paper of Deligne-Lusztig \cite{Deligne-Lusztig1976}. 

This paper treats the case $r>1$ where the study of the representation
theory of the finite group $G(O_r)$ is less complete. 
Shalika \cite{Shalika2004} treats the case $SL_2(O_r)$, Silberger 
\cite{Silberger1970} the case $PGL_2(O_r)$. Shintani \cite{Shintani1968} 
and G\'erardin \cite{Gerardin1972} treat cuspidal representations
of $GL_n(O_r)$ in order to construct supercuspidal representations of
$GL_n(F)$. The last two papers 
use Clifford theory and Weil representations over
finite fields. In the series of papers 
\cite{Hill1993,Hill1994,Hill1995-1,Hill1995-2}, Hill treats the case
$GL_n(O_r)$ systematically by means of Clifford theory, but 
different methods are used 
for representations associated with different type of adjoint orbits. 

In this paper, we will establish a parametrization of the irreducible
representations of $G(O_r)$ ($r>1$) associated with the regular (more 
precisely smoothly regular) adjoint orbits. Taking a representative 
$\beta$ of the adjoint orbit, the parametrization is given by means of
a subset of the character group of $G_{\beta}(O_r)$ where $G_{\beta}$
is the centralizer of $\beta$ in $G$ which is smooth commutative group
scheme over $O$. Our theory is based on Clifford theory
and Weil representations over finite fields, and 
it works well under an assumption
of the triviality of certain Schur multiplier of a finite commutative 
group $G_{\beta}(\Bbb F)$. We can verify the
assumption in the case of $GL_n$ with $n\leq 4$, and the discussions
in this paper show that the assumption is highly probable for the
reductive groups in general 

The main result of this paper is Theorem \ref{th:main-result}. The
situation is quite simple when $r$ is even, and almost all of this
paper is devoted to treat the case of $r=2l-1$ being odd. In this case
we need Weil representation over finite field to construct irreducible
representations of $G_{\beta}(O_r)\cdot K_{l-1}(O_r)$, 
where $K_{l-1}(O_r)$ is the kernel of the canonical group homomorphism
$G(O_r)\to G(O_{l-1})$, and at this
point appears the Schur multiplier as an obstruction to the
construction. Here we shall note that over a finite
field, Weil representation is a representation of a symplectic group, 
not of the $2$-fold covering group of it, and that the Schur
multiplier is coming not from Weil representation but from 
certain {\it twist} which occurs en route of connecting 
$K_{l-1}(O_r)$ with the Heisenberg group over finite field 
(see section \ref{sec:weil-representation-over-finite-field} for the
details). 

Several fundamental properties of the Schur multiplier will be
discussed in section \ref{sec:properties-of-c-beta-rho}. 
These properties, combined with the
results of \cite{Takase2016} in the case of $G=GL_n$, shows that it is
highly 
probable that the Schur multiplier is trivial for all reductive group
schemes over $O$ provided that $\beta$ is regular and that 
the residue characteristic is big enough. 

We will give some examples of classical groups where the
characteristic polynomial of $\beta$ is irreducible modulo
$\frak{p}$. In this case the
parametrization is given by a subset of the character group of unit
groups of unramified extensions of the base field $F$. See propositions 
\ref{prop:shintani-gerardin-parametrization} for a general linear
group, 
\ref{prop:shintani-gerardin-type-parametrization-for-gsp-2n} for a
group of symplectic similitudes, 
\ref{prop:shintani-gerardin-type-parametrization-for-go(s)-of-even-n} and 
\ref{prop:shintani-gerardin-type-parametrization-for-go(s)-of-odd-n} for a
general orthogonal group with respect to a quadratic form of even and
odd variables respectively, and 
\ref{prop:shintani-gerardin-type-parametrization-for-u(s)} for an
unitary group associated with Hermitian form of odd variables.

\section{Main results}\label{sec:main-results}

\subsection[]{}\label{subsec:fundamental-setting}
Let $G\subset GL_n$ be a closed smooth $O$-group subscheme, and
$\frak{g}$ the Lie algebra of $G$ which is a closed affine
$O$-subscheme of $\frak{gl}_n$ the Lie algebra of $GL_n$. We may
  assume that the fibers $G{\otimes}_OK$ ($K=F$ or $K=\Bbb F$) are
  non-commutative algebraic $K$-group (that is smooth $K$-group
  scheme).  

For any
$O$-algebra $K$, the set of the $K$-valued points $\frak{gl}_n(K)$ is
  identified with the $K$-Lie algebra of square matrices $M_n(K)$ of size $n$
  with Lie bracket $[X,Y]=XY-YX$, and 
the group of $K$-valued points $GL_n(K)$ is
identified with the matrix group
$$
 GL_n(K)=\{g\in M_n(K)\mid \det g\in K^{\times}\}
$$  
where $K^{\times}$ is the multiplicative group of $K$. Hence
$\frak{g}(K)$ is identified with a matrix Lie subalgebra of $\frak{gl}_n(K)$
and $G(K)$ is identified with a matrix subgroup of $GL_n(K)$. Let 
$$
 B:\frak{gl}_n{\times}_O\frak{gl}_n\to
                \mathcal{O}=\text{\rm Spec}(O[t])
$$
be the trace form on $\frak{gl}_n$, that is $B(X,Y)=\text{\rm tr}(XY)$
for all $X,Y\in\frak{gl}_n(K)$ with any $O$-algebra $K$. The
smoothness of $G$ implies that we have a canonical isomorphism
$$
 \frak{g}(O)/\varpi^r\frak{g}(O)\,\tilde{\to}\,
 \frak{g}(O_r)=\frak{g}(O){\otimes}_{O}O_r
$$
(\cite[Chap.II, $\S 4$, Prop.4.8]{Demazure-Gabriel1970}) and that the
 canonical group homomorphism $G(O)\to G(O_r)$ is
 surjective due to Hensel's lemma. Then for any $0<l<r$ the canonical
 group homomorphism $G(O_r)\to G(O_l)$ is surjective whose kernel is
 denoted by $K_l(O_r)$. 

For any $g\in G(O)$ (resp. $X\in\frak{g}(O)$), 
the image under the canonical surjection onto $G(O_l)$ 
(resp. onto $\frak{g}(O_l)$) with $l>0$ is denoted by 
$$
 g_l=g\npmod{\frak{p}^l}\in G(O_l)
 \quad
(\text{\rm resp.} X_l=X\npmod{\frak{p}^l}\in\frak{g}(O_l)).
$$
Since the reduction modulo $\frak{p}$ plays a fundamental role in our
theory, let us use 
the notation $\overline g=g\npmod{\frak p}\in G(\Bbb F)$ 
(resp. $\overline X=X\npmod{\frak p}\in\frak{g}(\Bbb F)$) if $l=1$. 

We will pose the following three conditions;
\begin{itemize}
\item[I)] $B:\frak{g}(\Bbb F)\times\frak{g}(\Bbb F)\to\Bbb F$ is
  non-degenerate, 
\item[II)] for any integers $r=l+l^{\prime}$ with 
           $0<l^{\prime}\leq l<r$, we have a group isomorphism
$$
 \frak{g}(O_{l^{\prime}})\,\tilde{\to}\,K_l(O_r)
$$
           defined by 
$X\npmod{\frak{p}^{l^{\prime}}}\mapsto1+\varpi^lX\npmod{\frak{p}^r}$,
\item[III)] if $r=2l-1\geq 3$ is odd, then we have a mapping
$$
 \frak{g}(O)\to K_{l-1}(O_r)
$$
defined by 
$X\mapsto(1+\varpi^{l-1}X+2^{-1}\varpi^{2l-2}X^{2l-2})\npmod{\frak{p}^r}$.
\end{itemize}
The condition I) implies that 
$B:\frak{g}(O_l)\times\frak{g}(O_l)\to O_l$ 
is non-degenerate for all $l>0$, and so 
$B:\frak{g}(O)\times\frak{g}(O)\to O$ is also non-degenerate. 
The mappings of the conditions II) and III) from Lie algebras to
groups can be regarded as truncations of the exponential mapping.

See section \ref{sec:example} for the examples of classical groups
which satisfy these three fundamental conditions of our theory. 

The character group of an finite abelian group $\mathcal{G}$ 
is denoted by $\mathcal{G}\sphat$.

\subsection[]{}\label{subsec:clifford-theory-in-general}
From now on we will fix an integer $r\geq 2$ and put 
$r=l+l^{\prime}$ with the smallest integer $l$ such that 
$0<l^{\prime}\leq l$. In other word
$$
 l^{\prime}=\begin{cases}
             l&:r=2l,\\
             l-1&:r=2l-1.
            \end{cases}
$$
Take a $\beta\in\frak{g}(O)$ and define a
character $\psi_{\beta}$ of the finite abelian group 
$K_l(O_r)$ by 
$$
 \psi_{\beta}((1+\varpi^lX)\nnpmod{\frak{p}^r})
 =\tau(\varpi^{-l^{\prime}}B(X,\beta))
 \qquad
 (X\in\frak{g}(O)).
$$
Then $\beta\npmod{\frak{p}^{l^{\prime}}}\mapsto\psi_{\beta}$ 
gives an isomorphism of the
additive group $\frak{g}(O_{l^{\prime}})$ onto the character
group $K_l(O_r)\sphat$. For any 
$g_r=g\npmod{\frak{p}^r}\in G(O_r)$, we have
$$
 \psi_{\beta}(g_r^{-1}hg_r)
 =\psi_{\text{\rm Ad}(g)\beta}(h)
 \qquad
 (h\in K_l(O_r)).
$$
So the stabilizer of $\psi_{\beta}$ in $G(O_R)$ is 
$$
 G(O_r,\beta)
 =\left\{g_r\in G(O_r)\bigm|
         \text{\rm Ad}(g)\beta\equiv\beta
                    \pmod{\frak{p}^{l^{\prime}}}\right\}
$$
which is a subgroup of $G(O_r)$ containing $K_{l^{\prime}}(O_r)$.

Now let us denote by 
$\text{\rm Irr}(G(O_r)\mid\psi_{\beta})$ 
(resp. $\text{\rm Irr}(G(O_r,\beta)\mid\psi_{\beta})$) the set of the
isomorphism classes of the irreducible complex representation $\pi$ of 
$G(O_r)$ (resp. $\sigma$ of $G(O_r,\beta)$) such
that 
$$
 \langle\psi_{\beta},\pi\rangle_{K_l(O_r)}
 =\dim_{\Bbb C}\text{\rm Hom}_{K_l(O_r)}(\psi_{\beta},\pi)
 >0
$$
(resp. 
$\langle\psi_{\beta},\sigma\rangle_{K_l(O_r)}>0$). Then Clifford's
 theory says that 
\begin{enumerate}
\item $\text{\rm Irr}(G(O_r))
       =\bigsqcup_{\beta\npmod{\frak{p}^{l^{\prime}}}}
           \text{\rm Irr}(G(O_r)\mid\psi_{\beta})$
  where $\bigsqcup_{\beta\npmod{\frak{p}^{l^{\prime}}}}$ 
  is the disjoint union over the
  representatives $\beta\npmod{\frak{p}^{l^{\prime}}}$ 
  of the $\text{\rm Ad}(G(O_{l^{\prime}}))$-orbits in 
  $\frak{g}(O_{l^{\prime}})$,
\item a bijection of 
      $\text{\rm Irr}(G(O_r,\beta)\mid\psi_{\beta})$ onto 
      $\text{\rm Irr}(G(O_r)\mid\psi_{\beta})$ is given by 
$$
 \sigma\mapsto\text{\rm Ind}_{G(O_r,\beta)}^{G(O_r)}\sigma.
$$
\end{enumerate}
So our problem is to give a good parametrization of the 
   set $\text{\rm Irr}(G(O_r,\beta)\mid\psi_{\beta})$ for 
$\beta\in\frak{g}(O)$ which is {\it regular} enough. 

For any $\beta\in\frak{g}(O)$, let us denote
by $G_{\beta}=Z_G(\beta)$ the centralizer of
$\beta$ in $G$  which is a closed $O$-group subscheme of $G$. The Lie
algebra $\frak{g}_{\beta}=Z_{\frak g}(\beta)$ of $G_{\beta}$ is 
the centralizer of $\beta$ in $\frak{g}$ which is 
a closed $O$-subscheme of $\frak{g}$. 

\subsection[]{}
\label{subsec:def-of-schur-multiplier-associated-with-alg-group}
In this subsection we will define a Schur multiplier which is an
obstruction to our theory.

Take a $\beta\in\frak{g}(O)$ such that 
$\frak{g}_{\beta}(\Bbb F)\lvertneqq\frak{g}(\Bbb F)$. Then non-zero 
$\Bbb F$-vector space 
$\Bbb V_{\beta}=\frak{g}(\Bbb F)/\frak{g}_{\beta}(\Bbb F)$ has a
symplectic form 
$$
 \langle \dot X,\dot Y\rangle_{\beta}=B([X,Y],\overline\beta)
$$
where $\dot X=X\pmod{\frak{g}_{\beta}(\Bbb F)}\in\Bbb V_{\beta}$ with 
$X\in\frak{g}_{\beta}(\Bbb F)$.  
Then $g\in G_{\beta}(\Bbb F)$ gives an element $\sigma_g$ of 
 the symplectic group $Sp(\Bbb V_{\beta})$ defined by 
$$
 X\npmod{\frak{g}_{\beta}(\Bbb F)}\mapsto
 \text{\rm Ad}(g)^{-1}X\npmod{\frak{g}_{\beta}(\Bbb F)}.
$$
Note that the group $Sp(\Bbb V_{\beta})$ acts on $\Bbb V_{\beta}$ from
right. Let 
$v\mapsto[v]$ be a $\Bbb F$-linear section on $\Bbb V_{\beta}$ of the
exact sequence 
\begin{equation}
 0\to\frak{g}_{\beta}(\Bbb F)
  \to\frak{g}(\Bbb F)\to\Bbb V_{\beta}\to 0.
\label{eq:fundamental-exact-sequence-of-lie-alg-and-symplectic-space}
\end{equation}
For any $v\in\Bbb V_{\beta}$ and $g\in G_{\beta}(\Bbb F)$, put
$$
 \gamma(v,g)=\gamma_{\frak g}(v,g)
 =\text{\rm Ad}(g)^{-1}[v]-[v\sigma_g]\in\frak{g}_{\beta}(\Bbb F).
$$
Take a $\rho\in\frak{g}_{\beta}(\Bbb F)\sphat$. Then there exists
uniquely a $v_g\in\Bbb V_{\beta}$ such that
$$
 \rho(\gamma(v,g))=\widehat\tau(\langle v,v_g\rangle_{\beta})
$$
for all $v\in\Bbb V_{\beta}$. Note that $v_g\in\Bbb V_{\beta}$ depends
on $\rho$ as well as the section $v\mapsto[v]$. Let
$$
 G_{\beta}(\Bbb F)^{(c)}
 =\{g\in G(\Bbb F)\mid\text{\rm Ad}(g)Y=Y\;\text{\rm for}\;
                        \forall Y\in\frak{g}_{\beta}(\Bbb F)\}
$$
be the centralizer of $\frak{g}_{\beta}(\Bbb F)$ in $G(\Bbb F)$, which
is a subgroup of $G_{\beta}(\Bbb F)$. Then for any 
$g,h\in G_{\beta}(\Bbb F)^{(c)}$, we have
\begin{equation}
 v_{gh}=v_h\sigma_g^{-1}+v_g
\label{eq:multiplication-formula-of-v-epsilon}
\end{equation}
because $\gamma(v,gh)=\gamma(v,g)+\gamma(v\sigma_g^{-1},h)$ for all
$v\in\Bbb V_{\beta}$. Put
$$
 c_{\beta,\rho}(g,h)
 =\widehat\tau(2^{-1}\langle v_g,v_{gh}\rangle_{\beta})
$$
for $g,h\in G_{\beta}(\Bbb F)^{(c)}$. Then the relation 
\eqref{eq:multiplication-formula-of-v-epsilon} shows that 
$c_{\beta,\rho}\in Z^2(G_{\beta}(\Bbb F)^{(c)},\Bbb C^{\times})$ is a
2-cocycle with trivial action of $G_{\beta}(\Bbb F)^{(c)}$ on 
$\Bbb C^{\times}$. Moreover we have

\begin{prop}
\label{prop:schur-multiplier-associated-with-finite-symplectic-space}
The Schur multiplier 
$[c_{\beta,\rho}]\in H^2(G_{\beta}(\Bbb F)^{(c)},\Bbb C^{\times})$ is
independent of the choice of the $\Bbb F$-linear section 
$v\mapsto[v]$.
\end{prop}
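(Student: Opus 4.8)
The plan is to show that replacing the $\Bbb F$-linear section $v\mapsto[v]$ by another one alters the family $\{v_g\}$ only by a $1$-coboundary of the form $\delta_g=w-w\sigma_g^{-1}$, and that this in turn multiplies $c_{\beta,\rho}$ by an explicit $2$-coboundary. I will suppress the subscript $\beta$ on the symplectic pairing throughout.

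First I would track how a change of section propagates. Two $\Bbb F$-linear sections differ by an $\Bbb F$-linear map $\phi:\Bbb V_\beta\to\frak{g}_\beta(\Bbb F)$, say $[v]'=[v]+\phi(v)$. Substituting into the definition of $\gamma$ and using that every $g\in G_\beta(\Bbb F)^{(c)}$ fixes $\frak{g}_\beta(\Bbb F)$ pointwise, so that $\mathrm{Ad}(g)^{-1}\phi(v)=\phi(v)$, one finds $\gamma'(v,g)=\gamma(v,g)+\phi(v)-\phi(v\sigma_g)$. The map $v\mapsto\rho(\phi(v))$ is a character of $\Bbb V_\beta$, so by nondegeneracy of the symplectic form there is a unique $w\in\Bbb V_\beta$ with $\rho(\phi(v))=\widehat\tau(\langle v,w\rangle)$. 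Combining these and invoking the symplectic invariance $\langle v\sigma_g,w\rangle=\langle v,w\sigma_g^{-1}\rangle$, the defining relation for $v_g'$ forces $v_g'=v_g+\delta_g$ with $\delta_g:=w-w\sigma_g^{-1}$. One checks directly that $\delta$ obeys the same twisted-additivity $\delta_{gh}=\delta_g+\delta_h\sigma_g^{-1}$ as $v$ does.

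Next I would expand the quotient. Writing $c_{\beta,\rho}'(g,h)/c_{\beta,\rho}(g,h)=\widehat\tau(2^{-1}E(g,h))$ and substituting $v_g'=v_g+\delta_g$, the common term $\langle v_g,v_{gh}\rangle$ cancels, leaving $E(g,h)=\langle v_g,\delta_{gh}\rangle+\langle\delta_g,v_{gh}\rangle+\langle\delta_g,\delta_{gh}\rangle$. After inserting the twisted-additivity relations and using antisymmetry to kill $\langle v_g,\delta_g\rangle+\langle\delta_g,v_g\rangle$ and $\langle\delta_g,\delta_g\rangle$, this reduces to $E(g,h)=\langle v_g,\delta_h\sigma_g^{-1}\rangle+\langle\delta_g,v_h\sigma_g^{-1}\rangle+\langle\delta_g,\delta_h\sigma_g^{-1}\rangle$. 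The crucial computation is to recognize each summand as a $2$-coboundary. Setting $s(g)=\langle v_g,w\sigma_g^{-1}\rangle$, $q(g)=\langle w,v_g\rangle$ and $p(g)=\langle w,w\sigma_g^{-1}\rangle$, repeated use of $v_{gh}=v_g+v_h\sigma_g^{-1}$, the identity $w\sigma_{gh}^{-1}=w\sigma_h^{-1}\sigma_g^{-1}$, and symplectic invariance yields $E=\partial(s-q+p)$, where $(\partial f)(g,h)=f(g)+f(h)-f(gh)$. Hence $c_{\beta,\rho}'=c_{\beta,\rho}\cdot\partial b$ with $b(g)=\widehat\tau(2^{-1}(s-q+p)(g))$, so the two cocycles are cohomologous and the class $[c_{\beta,\rho}]$ is unchanged.

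The step I expect to be the main obstacle is the bookkeeping in the identity $E=\partial(s-q+p)$: one must track the right action of $Sp(\Bbb V_\beta)$ faithfully, in particular the order reversal encoded in $w\sigma_{gh}^{-1}=w\sigma_h^{-1}\sigma_g^{-1}$, and move the symplectic operators across the pairing without sign errors. The factor $2^{-1}$ is harmless, since it merely rescales the $1$-cochain. I would also double-check the passage from $\gamma'$ to $v_g'$, as that is precisely where the hypothesis $g\in G_\beta(\Bbb F)^{(c)}$, rather than merely $g\in G_\beta(\Bbb F)$, is used in an essential way.
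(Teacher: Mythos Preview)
Your proof is correct and follows essentially the same route as the paper: compute how the family $\{v_g\}$ transforms under a change of section (by a term of the form $w-w\sigma_g^{-1}$), and then verify by direct expansion that the resulting change in $c_{\beta,\rho}$ is an explicit $2$-coboundary. The paper is terser---it records the single $1$-cochain $\alpha(g)=\widehat\tau\bigl(2^{-1}\langle v_g'-v_{g^{-1}},\delta\rangle_\beta\bigr)$ and asserts the coboundary identity---whereas you break the correction into the three pieces $s,q,p$ and carry out the bookkeeping in full; but the underlying computation is the same.
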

\begin{proof}
Take another $\Bbb F$-linear section $v\mapsto[v]^{\prime}$ 
with respect to which
we will define $\gamma^{\prime}(v,g)\in\frak{g}_{\beta}$ 
and $v^{\prime}_g\in\Bbb V_{\beta}$ as above. 
Then there exists a $\delta\in\Bbb V_{\beta}$ such that 
$\rho([v]-[v]^{\prime})
 =\widehat\tau\left(\langle v,\delta\rangle_{\beta}\right)$ 
for all $v\in\Bbb V_{\beta}$. We have 
$v_g^{\prime}=v_g+\delta-\delta\sigma_g$ for all 
$g\in G_{\beta}(\Bbb F)^{(c)}$. So if we put 
$\alpha(g)
 =\widehat\tau\left(2^{-1}\langle 
    v_g^{\prime}-v_{g^{-1}},\delta\rangle_{\beta}
            \right)$ 
for $g\in G_{\beta}(\Bbb F)^{(c)}$, then we have
$$
 \widehat\tau\left(2^{-1}\langle v_g^{\prime},
           v_{gh}^{\prime}\rangle_{\beta}\right)
 =\widehat\tau\left(2^{-1}\langle v_g,
           v_{gh}\rangle_{\beta}\right)\cdot
  \alpha(h)\alpha(gh)^{-1}\alpha(g)
$$
for all $g,h\in G_{\beta}(\Bbb F)^{(c)}$.
\end{proof}

\subsection[]{}\label{subsec:main-result}
Now our main result is

\begin{thm}\label{th:main-result}
Suppose that a $\beta\in\frak{g}(O)$ satisfies the conditions
\begin{enumerate}
\item $G_{\beta}$ is commutative smooth $O$-group scheme, and
\item the Schur multiplier 
      $[c_{\beta,\rho}]\in H^2(G_{\beta}(\Bbb F),\Bbb C^{\times})$ is
      trivial for all characters $\rho\in\frak{g}_{\beta}(\Bbb F)\sphat$.
\end{enumerate}
Then 
we have a bijection $\theta\mapsto\sigma_{\beta,\theta}$ of the set
$$
 \left\{\theta\in G_{\beta}(O_r)\sphat\;\;\;
         \text{\rm s.t. $\theta=\psi_{\beta}$ on 
               $G_{\beta}(O_r)\cap K_l(O_r)$}\right\}
$$
onto $\text{\rm Irr}(G(O_r,\beta)\mid\psi_{\beta})$.
\end{thm}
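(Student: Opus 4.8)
The plan is to localize the entire problem inside the subgroup $J=G_\beta(O_r)\cdot K_{l^{\prime}}(O_r)$ and to build $\sigma_{\beta,\theta}$ there by Clifford theory along the chain $K_l(O_r)\subseteq K_{l^{\prime}}(O_r)\subseteq J$. The first observation I would make is that $J=G(O_r,\beta)$: both subgroups contain $K_{l^{\prime}}(O_r)$, and by the smoothness of $G_\beta$ (the first hypothesis) both map onto the stabilizer of $\beta\npmod{\frak p^{l^{\prime}}}$ in $G(O_{l^{\prime}})$, namely $G_\beta(O_{l^{\prime}})$. Hence it suffices to parametrize $\text{\rm Irr}(J\mid\psi_\beta)$. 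When $r=2l$ the layer $K_{l-1}(O_r)$ need not lie in $J$, the normal subgroup $K_l(O_r)$ is abelian, and $\psi_\beta$ is extended directly along the abelian quotient $J/K_l(O_r)=G_\beta(O_l)$ using $\theta$; no symplectic space and no Schur multiplier intervene, which is why the even case is elementary. I therefore concentrate on $r=2l-1$.

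For $r=2l-1$ set $N=K_l(O_r)\subseteq M=K_{l-1}(O_r)\subseteq J$. The truncated exponential of condition III) and the commutator map identify $M/N\cong\frak g(\Bbb F)$, and transporting the commutator through $\psi_\beta$ reproduces exactly the symplectic form $\langle\dot X,\dot Y\rangle_\beta$, whose radical is $\frak g_\beta(\Bbb F)$; thus $M$ is a Heisenberg-type group with symplectic quotient $\Bbb V_\beta$. I would first extend $\psi_\beta$ to the preimage $R$ of $\frak g_\beta(\Bbb F)$ in $M$, on which $\psi_\beta$ is central; these extensions form a torsor under $\frak g_\beta(\Bbb F)\sphat$, and for each $\rho$ the Stone--von Neumann theorem furnishes a unique $\eta_{\beta,\rho}\in\text{\rm Irr}(M\mid\psi_\beta)$ of dimension $|\Bbb V_\beta|^{1/2}$. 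Since $G_\beta$ is commutative, $\text{\rm Ad}(g)$ acts trivially on $\frak g_\beta(\Bbb F)$, so conjugation by $G_\beta(O_r)$ fixes every $\rho$ and stabilizes each $\eta_{\beta,\rho}$.

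Next I would extend $\eta_{\beta,\rho}$ from $M$ to $J=G_\beta(O_r)\cdot M$. The map $g\mapsto\sigma_g$ sends $G_\beta(O_r)$ into $Sp(\Bbb V_\beta)$, and pulling back the Weil representation produces an extension up to scalars. The crux is that the $\Bbb F$-linear section $v\mapsto[v]$ is not $G_\beta$-equivariant: the discrepancy $\gamma(v,g)$, read through $\rho$, is precisely what forces the projective cocycle of this extension to be the $2$-cocycle $c_{\beta,\rho}$ introduced above. By the second hypothesis its class $[c_{\beta,\rho}]$ is trivial, so I can rescale and obtain a genuine extension $\widetilde\eta_{\beta,\rho}$ of $\eta_{\beta,\rho}$ to $J$; the set of such extensions is then a torsor under $(J/M)\sphat=G_\beta(O_{l-1})\sphat$.

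Finally, Clifford theory for $M\triangleleft J$ gives $\text{\rm Irr}(J\mid\psi_\beta)=\bigsqcup_\rho\text{\rm Irr}(J\mid\eta_{\beta,\rho})$, each summand being the torsor just described. To match this with characters I would observe that restricting $\theta$ to $G_\beta(O_r)\cap M$ and dividing by $\psi_\beta$ recovers $\rho\in\frak g_\beta(\Bbb F)\sphat$, while the remaining freedom in $\theta$ is a character of $G_\beta(O_{l-1})=J/M$; tensoring a fixed $\widetilde\eta_{\beta,\rho}$ with that character yields $\sigma_{\beta,\theta}$ and the asserted bijection. The step I expect to be hardest is the third one: realizing the Weil representation at the level of the $O_r$-group $J$ and verifying that the cohomology class of its cocycle is literally the $\Bbb F$-defined $[c_{\beta,\rho}]$, since the Weil representation lives on the finite group $Sp(\Bbb V_\beta)$ while the twist $\gamma$ must be carried faithfully from $G_\beta(O_r)$ down through its reduction to $G_\beta(\Bbb F)$.
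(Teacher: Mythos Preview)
Your proposal is correct and follows essentially the same route as the paper: the identification $G(O_r,\beta)=G_\beta(O_r)\cdot K_{l'}(O_r)$ via smoothness, the Heisenberg structure of $K_{l-1}(O_r)$ with center $Z(O_r,\beta)$ (your $R$), the Stone--von Neumann step producing $\pi_{\beta,\rho}$ (your $\eta_{\beta,\rho}$), the Weil-representation extension whose projective cocycle is computed to be $c_{\beta,\rho}(\overline g,\overline h)$, and the final reparametrization $(\theta,\rho)\mapsto\theta$ are exactly the paper's Propositions 2.6.1--2.6.5 together with Section 3. The only points where the paper is more explicit than your sketch are the normalization ensuring $U_{\beta,\rho}(h)=1$ for $h\in G_\beta(O_r)\cap K_{l-1}(O_r)$ (needed so that $\sigma_{\beta,\theta}$ is well-defined on the product), and the precise formula $\rho(\overline X)=\tau(-\varpi^{-l}B(X,\beta))\cdot\theta\bigl((1+\varpi^{l-1}X+2^{-1}\varpi^{2l-2}X^2)_r\bigr)$ implementing your ``divide by $\psi_\beta$'' (note $\psi_\beta$ itself is only defined on $K_l(O_r)$, so one uses its canonical extension $\widetilde\psi_\beta$ to $Z(O_r,\beta)$).
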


The proof is given in subsection
\ref{subsec:proof-of-main-result-for-even-r} for even $r$ and 
subsection \ref{subsec:description-of-main-result-for-odd-r} for odd
$r$.

\begin{rem}\label{remark:remarks-on-main-result}
\begin{enumerate}
\item A sufficient condition for the first condition of Theorem
  \ref{th:main-result} is given by Theorem
  \ref{th:sufficient-condition-for-smooth-commutativeness-of-g-beta}.
\item The smoothness of $G_{\beta}$ over $O$ implies that the
      canonical group homomorphism 
      $G_{\beta}(O_r)\to G_{\beta}(O_{l^{\prime}})$ is surjective. So we
      have
\begin{equation}
 G(O_r,\beta)=G_{\beta}(O_r)\cdot K_{l^{\prime}}(O_r),
 \qquad
 l^{\prime}=\begin{cases}
             l&:r=2l,\\
             l-1&:r=2l-1.
            \end{cases}
\label{eq:fundamental-structure-of-stabilizer-of-psi-beta}
\end{equation}
\item As presented in the following two subsections, 
      the second condition in the theorem is required only in the case
      of $r$ being odd.
\item Since $G{\otimes}_O\Bbb F$ and $G_{\beta}{\otimes}_O\Bbb F$ are
      $\Bbb F$-algebraic group, and the former is not commutative
      while the latter is, so we have
$$
 \dim_{\Bbb F}\frak{g}(\Bbb F)=\dim G{\otimes}_O\Bbb F
 >\dim G_{\beta}{\otimes}_O\Bbb F=\dim_{\Bbb F}\frak{g}_{\beta}(\Bbb F).
$$
      That is $\frak{g}_{\beta}(\Bbb F)\lvertneqq\frak{g}(\Bbb F)$.
\item Since $G_{\beta}$ is assumed to be commutative, we have
      $G_{\beta}(\Bbb F)^{(c)}=G_{\beta}(\Bbb F)$ in the definition of
      the Schur multiplier $[c_{\beta,\rho}]$. 
\item Assume that $G_{\beta}(\Bbb F)^{(c)}$ is commutative. Then the
      cohomology class 
      $[c_{\beta,\rho}]
       \in H^2(G_{\beta}(\Bbb F)^{(c)},\Bbb C^{\times})$ 
      is trivial if and only if $c_{\beta,\rho}$ is symmetric, that is 
      $c_{\eta,\rho}(g,h)=c_{\beta,\rho}(h,g)$ for all 
      $g,h\in G_{\beta}(\Bbb F)^{(c)}$. In fact, only if part is
      trivial. Let
\begin{equation}
 1\to\Bbb T\xrightarrow{i}\mathcal{G}
           \xrightarrow{j}G_{\beta}(\Bbb F)^{(c)}\to 1
\label{eq:group-extension-of-c-beta-rho}
\end{equation}
      be the group extension associated with the $2$-cocycle 
      $c_{\beta,\rho}
       \in Z^2(B_{\beta}(\Bbb F)^{(c)},\Bbb T)$ where $\Bbb T$ is the
       subgroup of $z\in\Bbb C^{\times}$ such that $|z|=1$. 
       Then the
       groups are compact commutative group, and we have a group
       extension of the Pontryagin dual groups
\begin{equation}
 1\to{G_{\beta}(\Bbb F)^{(c)}}\sphat
  \xrightarrow{\widehat j}\mathcal{G}\sphat
  \xrightarrow{\widehat i}{\Bbb T}\sphat\to 1.
\label{eq:dual-of-group-extension-of-c-beta-rho}
\end{equation}
      Since ${\Bbb T}\sphat\simeq\Bbb Z$ is free the group extension 
      \eqref{eq:dual-of-group-extension-of-c-beta-rho} is trivial and so
      is the group extension 
      \eqref{eq:group-extension-of-c-beta-rho}.
\end{enumerate}
\end{rem}

\subsection[]{}\label{subsec:proof-of-main-result-for-even-r}
Assume that $r=2l$ is even so that $l^{\prime}=l$. In this case the
proof of Theorem \ref{th:main-result} is quite easy. Let us suppose
more generally that there exists a commutative subgroup $\mathcal{C}$
of $G(O_r,\beta)$ such that
$$
 G(O_r,\beta)=\mathcal{C}\cdot K_l(O_r).
$$
Let us denote by $\mathcal{C}\sphat_{\beta}$ the subset of the
character group $\mathcal{C}\sphat$ consisting of the
$\theta\in\mathcal{C}\sphat$ such 
that $\theta=\psi_{\beta}$ on $\mathcal{C}\cap K_l(O_r)$. Then any 
$\theta\in\mathcal{C}\sphat_{\beta}$ gives an one-dimensional
representation $\sigma_{\beta,\theta}$ of $G(O_r,\beta)$ defined by 
$$
 \sigma_{\beta,\theta}(gh)=\theta(g)\cdot\psi_{\beta}(h)
 \qquad
 (g\in\mathcal{C}, h\in K_l(O_r)).
$$
Then we have a proposition of which our Theorem 
\ref{th:main-result} is a special case;

\begin{prop}\rm\label{prop:generalized-main-result-for-even-r}
$\theta\mapsto\sigma_{\beta,\theta}$ gives a bijection of 
$\mathcal{C}\sphat_{\beta}$ onto 
$\text{\rm Irr}(G(O_r,\beta)\mid\psi_{\beta})$.
\end{prop}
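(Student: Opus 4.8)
The plan is to show that $\theta\mapsto\sigma_{\beta,\theta}$ is a well-defined injection into $\text{\rm Irr}(G(O_r,\beta)\mid\psi_{\beta})$, and then, as the substantial part, that it is surjective. First I would check that $\sigma_{\beta,\theta}$ really is a (necessarily one-dimensional, hence irreducible) representation. Well-definedness of the formula $\sigma_{\beta,\theta}(gh)=\theta(g)\psi_{\beta}(h)$ follows from the hypothesis $\theta=\psi_{\beta}$ on $\mathcal{C}\cap K_l(O_r)$: if $g_1h_1=g_2h_2$ then $g_2^{-1}g_1=h_2h_1^{-1}\in\mathcal{C}\cap K_l(O_r)$, so $\theta(g_2^{-1}g_1)=\psi_{\beta}(h_2h_1^{-1})$ and the two expressions agree. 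To see it is a homomorphism I would use that $K_l(O_r)$ is normal in $G(O_r)$ together with the transformation formula $\psi_{\beta}(g_r^{-1}hg_r)=\psi_{\text{\rm Ad}(g)\beta}(h)$: for $g\in\mathcal{C}\subset G(O_r,\beta)$ we have $\text{\rm Ad}(g)\beta\equiv\beta\pmod{\frak{p}^{l^{\prime}}}$, hence $\psi_{\text{\rm Ad}(g)\beta}=\psi_{\beta}$, so $\psi_{\beta}$ is invariant under conjugation by $\mathcal{C}$; rearranging $g_1h_1g_2h_2=(g_1g_2)\cdot(g_2^{-1}h_1g_2)h_2$ and applying this invariance yields the homomorphism property. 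Since $\sigma_{\beta,\theta}$ restricts to $\psi_{\beta}$ on $K_l(O_r)$, it lies in $\text{\rm Irr}(G(O_r,\beta)\mid\psi_{\beta})$, and injectivity is immediate because restricting the one-dimensional $\sigma_{\beta,\theta}$ to $\mathcal{C}$ recovers $\theta$.

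For surjectivity, take $\sigma\in\text{\rm Irr}(G(O_r,\beta)\mid\psi_{\beta})$. The key step is to show $\dim_{\Bbb C}\sigma=1$. Since $K_l(O_r)$ is normal in $G(O_r,\beta)$ and $G(O_r,\beta)$ is by definition the stabilizer of $\psi_{\beta}$, every $G(O_r,\beta)$-conjugate of $\psi_{\beta}$ equals $\psi_{\beta}$; hence by Clifford theory the restriction $\sigma|_{K_l(O_r)}$ is $\psi_{\beta}$-isotypic, so $K_l(O_r)$ acts on the space of $\sigma$ through the scalar character $\psi_{\beta}$. Because $\mathcal{C}$ is commutative, the operators $\sigma(\mathcal{C})$ pairwise commute, and as $G(O_r,\beta)=\mathcal{C}\cdot K_l(O_r)$ the whole image $\sigma(G(O_r,\beta))$ consists of scalar multiples of elements of $\sigma(\mathcal{C})$, hence is abelian. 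An irreducible representation with abelian image is one-dimensional by Schur's lemma, giving $\dim_{\Bbb C}\sigma=1$.

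With $\dim_{\Bbb C}\sigma=1$ in hand, I would set $\theta=\sigma|_{\mathcal{C}}$. This is a character of $\mathcal{C}$ agreeing with $\psi_{\beta}=\sigma|_{K_l(O_r)}$ on $\mathcal{C}\cap K_l(O_r)$, so $\theta\in\mathcal{C}\sphat_{\beta}$; and $\sigma(gh)=\sigma(g)\sigma(h)=\theta(g)\psi_{\beta}(h)$ shows $\sigma=\sigma_{\beta,\theta}$, completing surjectivity. The only real obstacle is the dimension count, and it rests entirely on the conjugation-invariance of $\psi_{\beta}$ under its own stabilizer (so that Clifford theory forces isotypicity and the scalar action of $K_l(O_r)$) combined with the commutativity of $\mathcal{C}$; everything else is bookkeeping about the factorization $G(O_r,\beta)=\mathcal{C}\cdot K_l(O_r)$.
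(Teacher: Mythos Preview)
Your proof is correct and follows essentially the same route as the paper's. Both arguments reduce surjectivity to showing $\dim\sigma=1$ by first observing that $K_l(O_r)$ acts through the scalar $\psi_{\beta}$ (the paper phrases this as $V_\sigma(\psi_\beta)=V_\sigma$, you invoke Clifford isotypicity) and then exploiting the commutativity of $\mathcal{C}$; the paper does the last step by twisting by an extension $\chi$ of $\psi_\beta$ so that $\chi^{-1}\otimes\sigma$ factors through the abelian quotient $G(O_r,\beta)/K_l(O_r)\cong\mathcal{C}/(\mathcal{C}\cap K_l(O_r))$, while you equivalently note that the image $\sigma(G(O_r,\beta))$ is abelian. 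Your write-up is in fact more complete, since you supply the well-definedness and homomorphism checks for $\sigma_{\beta,\theta}$ that the paper leaves implicit.
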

\begin{proof}
Take any $\sigma\in\text{\rm Irr}(G(O_r,\beta)\mid\psi_{\beta})$ with
representation space $V_{\sigma}$. Then
$$
 V_{\sigma}(\psi_{\beta})
 =\{v\in V_{\sigma}\mid \sigma(g)v=\psi_{\beta}(g)v\,
                \text{\rm for $\forall g\in K_l(O_r)$}\}
$$
is a non-trivial $G(O_r,\beta)$-subspace of $V_{\sigma}$ so that 
$V_{\sigma}=V_{\sigma}(\psi_{\beta})$. Then, for any 
one-dimensional representation $\chi$ of $G(O_r,\beta)$ such that 
$\chi=\psi_{\beta}$ on $K_l(O_r)$, we have 
$K_l(O_r)\subset\text{\rm Ker}(\chi^{-1}\otimes\sigma)$. On the other
hand $G(O_r,\beta)/K_l(O_r)$ is commutative, we have 
$\dim(\chi^{-1}\otimes\sigma)=1$ and then $\dim\sigma=1$. 
Put $\theta=\sigma|_{\mathcal{C}}\in\mathcal{C}\sphat_{\beta}$ and 
we have $\sigma=\sigma_{\beta,\theta}$. 
\end{proof}

\subsection[]{}\label{subsec:description-of-main-result-for-odd-r}
Assume that $r=2l-1\geq 3$ is odd so that $l^{\prime}=l-1\geq 1$. 
We have a chain of canonical surjections
\begin{equation}
 \heartsuit:K_{l-1}(O_r)\to K_{l-1}(O_{r-1})\,\tilde{\to}\,
        \frak{g}(O_{l-1})\to\frak{g}(\Bbb F)
\label{eq:canonical-surjection-of-k-l-1-to-g(f)}
\end{equation}
defined by
\begin{align*}
 1+\varpi^{l-1}X\nnpmod{\frak{p}^r}
 &\mapsto
 1+\varpi^{l-1}X\nnpmod{\frak{p}^{r-1}}\\
 &\mapsto
  X\nnpmod{\frak{p}^{l-1}}\mapsto
  \overline X=X\npmod{\frak p}..
\end{align*}
Let us denote by $Z(O_r,\beta)$ the inverse image 
under the surjection $\heartsuit$ of $\frak{g}_{\beta}(\Bbb F)$. Then 
$Z(O_r,\beta)$ is a normal subgroup of $K_{l-1}(O_r)$ containing 
$K_l(O_r)$ as the kernel of $\heartsuit$. 

Let us denote by $Y_{\beta}$ the set of the group
homomorphisms $\psi$ of $Z(O_r,\beta)$ to $\Bbb C^{\times}$ 
such that $\psi=\psi_{\beta}$ on $K_l(O_r)$. Then
a bijection of $\frak{g}_{\beta}(\Bbb F)\sphat$ onto $Y_{\beta}$ is
given by
$$
 \rho\mapsto\psi_{\beta,\rho}
            =\widetilde{\psi}_{\beta}\cdot(\rho\circ\heartsuit),
$$
where a group homomorphism 
$\widetilde{\psi}_{\beta}:Z(O_r,\beta)\to\Bbb C^{\times}$ 
is defined by 
$$
 1+\varpi^{l-1}X\npmod{\frak{p}^r}\mapsto
 \tau\left(\varpi^{-l}B(X,\beta)-(2\varpi)^{-1}B(X^2,\beta)\right)
$$
with $\overline X=X\npmod{\frak{p}}\in\frak{g}_{\beta}(\Bbb F)$. 

Take a $\psi\in Y_{\beta}$. 
For two elements 
$$
 x=1+\varpi^{l-1}X\npmod{\frak{p}^r},
 \quad
 y=1+\varpi^{l-1}Y\npmod{\frak{p}^r}
$$ 
of $K_{l-1}(O_r)$, we have 
$x^{-1}=1-\varpi^{l-1}X+2^{-1}\varpi^{2l-2}X^2\npmod{\frak{p}^r}$ so
that we have
$$
 xyx^{-1}y^{-1}
 =1+\varpi^{r-1}[X,Y]\npmod{\frak{p}^r}
 \in K_{r-1}(O_r)\subset K_l(O_r)
$$
and so 
$\psi_{\beta}(xyx^{-1}y^{-1})
 =\tau\left(\varpi^{-1}B(X,\text{\rm ad}(Y)\beta)\right)$. 
Hence we have 
$$
 \psi(xyx^{-1}y^{-1})=\psi_{\beta}(xyx^{-1}y^{-1})=1
$$
for all $x\in K_{l-1}(O_r)$ and $y\in Z(O_r,\beta)$ so
that we can define 
$$
 D_{\psi}:K_{l-1}(O_r)/Z(O_r,\beta)\times K_{l-1}(O_r)/Z(O_r,\beta)
          \to\Bbb C^{\times}
$$
by
$$
 D_{\psi}(\dot g,\dot h)
 =\psi(ghg^{-1}h^{-1})
 =\psi_{\beta}(ghg^{-1}h^{-1})
 =\tau\left(\varpi^{-1}B([X,Y],\beta)\right)
$$
for 
$g=(1+\varpi^{l-1}X)\npmod{\frak{p}^r}, 
 h=(1+\varpi^{l-1}Y)\npmod{\frak{p}^r}
 \in K_{l-1}(O_r)$, which is non-degenerate. Then Proposition 3.1.1 of 
\cite{Takase2016} gives

\begin{prop}
\label{prop:existence-of-fundamental-rep-associated-with-psi} 
For any $\psi=\psi_{\beta,\rho}\in Y_{\beta}$ with 
$\rho\in\frak{g}_{\beta}(\Bbb F)\sphat$, 
there exists unique irreducible
representation $\pi_{\beta,\rho}$ of $K_{l-1}(O_r)$ such that 
$\langle\psi,\pi_{\beta,\rho}\rangle_{Z(O_r,\beta)}>0$. Furthermore 
$$
 \text{\rm Ind}_{Z(O_r,\beta)}^{K_{l-1}(O_r)}\psi
 =\bigoplus^{\dim\pi_{\beta,\rho}}\pi_{\beta,\rho}
$$
and $\pi_{\beta,\rho}(x)$ is the homothety $\psi(x)$ for all 
$x\in Z(O_r,\beta)$.
\end{prop}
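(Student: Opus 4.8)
The plan is to recognise the statement as the Stone--von Neumann theorem for the finite Heisenberg-type group $K_{l-1}(O_r)$; this is exactly the content of Proposition 3.1.1 of \cite{Takase2016}, so I would first verify that its hypotheses hold and then recall the argument. Write $N=K_{l-1}(O_r)$ and $Z=Z(O_r,\beta)$. The chain $\heartsuit$ gives an isomorphism $N/Z\,\tilde{\to}\,\Bbb V_\beta$ of $\Bbb F$-vector spaces; the commutator computation preceding the statement shows $[N,N]\subseteq K_{r-1}(O_r)\subseteq K_l(O_r)\subseteq Z$, so $N/Z$ is abelian, and it shows $\psi(gzg^{-1}z^{-1})=1$ for $g\in N$, $z\in Z$, i.e. $\psi$ is $N$-invariant. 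Finally $D_\psi(\dot g,\dot h)=\widehat\tau(\langle\dot g,\dot h\rangle_\beta)$ is the non-degenerate symplectic form on $\Bbb V_\beta$ pulled back along the non-trivial character $\widehat\tau$. Recording these reductions leaves only the abstract Heisenberg argument.

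Next I would extract the multiplicity bookkeeping from Clifford theory. Since $Z\triangleleft N$ and $\psi$ is $N$-invariant, for every $\pi'\in\text{\rm Irr}(N\mid\psi)$ the restriction $\pi'|_Z$ is $\psi$-isotypic, equal to $(\dim\pi')\,\psi$; in particular $\pi'(x)=\psi(x)$ is the homothety for $x\in Z$, and by Frobenius reciprocity $\langle\text{\rm Ind}_Z^N\psi,\pi'\rangle_N=\dim\pi'$. Hence
$$
 \text{\rm Ind}_Z^N\psi=\bigoplus_{\pi'\in\text{\rm Irr}(N\mid\psi)}(\dim\pi')\,\pi',
 \qquad
 [N:Z]=\sum_{\pi'}(\dim\pi')^2 .
$$
It therefore suffices to exhibit a single $\pi\in\text{\rm Irr}(N\mid\psi)$ with $(\dim\pi)^2=[N:Z]$; the displayed identity then forces $\text{\rm Irr}(N\mid\psi)=\{\pi\}$ and $\text{\rm Ind}_Z^N\psi=(\dim\pi)\,\pi$, yielding at once existence, uniqueness, the isotypic decomposition, and the homothety claim.

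To build such a $\pi$ I would choose a Lagrangian subspace $L\subset\Bbb V_\beta$ for $\langle\cdot,\cdot\rangle_\beta$ and let $M\subset N$ be the preimage of $L$ under $N\to N/Z$. Because $L$ is isotropic, $\psi(mm'm^{-1}(m')^{-1})=D_\psi(\dot m,\dot m')=1$ for $m,m'\in M$, so $\psi$ is trivial on $[M,M]\subseteq Z$; as $M/[M,M]$ is abelian and $\Bbb C^{\times}$ is divisible, $\psi$ extends to a character $\widetilde\psi$ of $M$. Put $\pi=\text{\rm Ind}_M^N\widetilde\psi$, so that $\dim\pi=[N:M]=|\Bbb V_\beta|/|L|=|L|=[N:Z]^{1/2}$ since $|L|^2=|\Bbb V_\beta|$ for a Lagrangian.

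The main obstacle is the irreducibility of $\pi$, which I would settle by Mackey's irreducibility criterion. For $n\in N\setminus M$ one has $\dot n\notin L=L^{\perp}$, so non-degeneracy forces the $\Bbb F$-linear functional $\dot m\mapsto\langle\dot n,\dot m\rangle_\beta$ on $L$ to be nonzero, hence surjective onto $\Bbb F$; since $\widehat\tau$ is a non-trivial character of $\Bbb F$ we may pick $\dot m\in L$ with $\widehat\tau(\langle\dot n,\dot m\rangle_\beta)\neq1$. Lifting to $m\in M$, the relation $[N,N]\subseteq Z\subseteq M$ shows $n^{-1}mn\in mZ\subseteq M$, so $m\in M\cap nMn^{-1}$, and there $\widetilde\psi^{\,n}(m)\,\widetilde\psi(m)^{-1}=\psi(n^{-1}mnm^{-1})=\widehat\tau(-\langle\dot n,\dot m\rangle_\beta)\neq1$. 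Thus $\widetilde\psi$ and $\widetilde\psi^{\,n}$ are distinct on $M\cap nMn^{-1}$ for every $n\notin M$, so $\pi$ is irreducible with $\langle\psi,\pi\rangle_Z=\dim\pi>0$. Combined with the dimension identity above this completes the proof, and I would set $\pi_{\beta,\rho}=\pi$.
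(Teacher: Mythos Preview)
Your proposal is correct and matches the paper's approach: the paper simply cites Proposition 3.1.1 of \cite{Takase2016} for this statement, and you have supplied precisely the standard finite Stone--von Neumann argument behind that citation (verifying the hypotheses, then producing the irreducible via induction from a Lagrangian preimage and checking Mackey's criterion). Your Lagrangian construction is moreover the abstract form of the explicit Schr\"odinger model the paper builds in subsection \ref{subsec:schrodinger-representation}.
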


Fix a $\psi=\psi_{\beta,\rho}\in Y_{\beta}$ with 
$\rho\in\frak{g}_{\beta}(\Bbb F)\sphat$. 
Let $G_l(O_r,\beta)^{(c)}$ be a subgroup of $G(O_r,\beta)$ defined
by
$$
 G_l(O_r,\beta)^{(c)}
 =\left\{g\npmod{\frak{p}^r}\in G(O_r)\biggm|
    \begin{array}{l}
     \text{\rm Ad}(g)\beta\equiv\beta\npmod{\frak{p}^l},\\
     \text{\rm Ad}(\overline g)X=X\,\text{\rm for}\,
      \forall X\in\frak{g}_{\beta}(\Bbb F)
    \end{array}\right\}.
$$
Then, for any $g_r=g\npmod{\frak{p}^r}\in G_l(O_r,\beta)^{(c)}$ and 
$x=(1+\varpi^{l-1}X)\npmod{\frak{p}^r}\in Z(O_r,\beta)$, we have
\begin{align*}
 g_r^{-1}xg_rx^{-1}
 &=\left(1+\varpi^{l-1}g^{-1}Xg\right)
  \left(1-\varpi^{l-1}X+2^{-1}\varpi^{2l-2}X^2\right)\npmod{\frak{p}^r}\\
 &=1+\varpi^{l-1}\left(\text{\rm Ad}(g)^{-1}X-X\right)
    \npmod{\frak{p}^r}
  \in K_l(O_r),
\end{align*}
and
$$
 \psi(g_r^{-1}xg_rx^{-1})
 =\psi_{\beta}(g_r^{-1}xg_rx^{-1})
 =\tau\left(\varpi^{-l}B(X,\text{\rm Ad}(g)\beta-\beta)\right)=1,
$$
that is $\psi(g_r^{-1}xg_r)=\psi(x)$ for all $x\in Z(O_r,\beta)$. This
means that, for any $g\in G_l(O_r,\beta)^{(c)}$, the $g$-conjugate of
$\pi_{\beta,\rho}$ is isomorphic to $\pi_{\beta,\rho}$, that is, there
exists a $U(g)\in GL_{\Bbb C}(V_{\beta,\rho})$ ($V_{\beta,\rho}$ is the
representation space of $\pi_{\beta,\rho}$) such that
$$
 \pi_{\beta,\rho}(g^{-1}xg)
 =U(g)^{-1}\circ\pi_{\beta,\rho}(x)\circ U(g)
$$
for all $x\in K_{l-1}(O_r)$, and moreover, for any 
$g,h\in G_l(O_r,\beta)^{(c)}$, there exists a 
$c_U(g,h)\in\Bbb C^{\times}$ such that
$$
 U(g)\circ U(h)=c_U(g,h)\cdot U(gh).
$$
Then $c_U\in Z^2(G_l(O_r,\beta)^{(c)},\Bbb C^{\times})$ is a 
$\Bbb C^{\times}$-valued 2-cocycle on $G_l(O_r,\beta)^{(c)}$ with
trivial action on $\Bbb C^{\times}$, and the cohomology class 
$[c_U]\in H^2(G_l(O_r,\beta)^{(c)},\Bbb C^{\times})$ is independent of
the choice of each $U(g)$. 

In the next section, we will construct $\pi_{\beta,\rho}$ by means of
Weil representations over the finite field $\Bbb F$ (see Proposition
\ref{prop:another-expression-of-pi-beta-psi}), and will show
that we can choose $U(g)$ so that we have 
$$
 c_U(g,h)=c_{\beta,\rho}(\overline g,\overline h)
$$ 
for all $g,h\in G_l(O_r,\beta)^{(c)}$, where 
$\overline g\in G_{\beta}(\Bbb F)^{(c)}$ is the image of 
$g\in G_l(O_r,\beta)^{(c)}$ under the canonical surjection 
$G(O_r)\to G(\Bbb F)$ (see subsection \ref{subsec:action-of-g-o-f-beta}).

Let us assume 

\begin{assu}
\label{hypo:existence-of-good-abelian-subgroup-of-g1(c)} 
There exists a commutative subgroup 
$\mathcal{C}\subset G_l(O_r,\beta)^{(c)}$ such that
\begin{enumerate}
\item $G(O_r,\beta)=\mathcal{C}\cdot K_{l-1}(O_r)$,
\item the cohomology class 
      $[c_{\beta,\rho}|_{\overline{\mathcal{C}}\times\overline{\mathcal{C}}}]
       \in H^2(\overline{\mathcal{C}},\Bbb C^{\times})$ is trivial for all 
      $\rho\in\frak{g}_{\beta}(\Bbb F)\sphat$, where
       $\overline{\mathcal C}\subset G_{\beta}(\Bbb F)^{(c)}$ is the
       image of $\mathcal{C}$ under the canonical surjection 
      $G(O_r)\to G(\Bbb F)$.
\end{enumerate}
\end{assu}

Under this assumption we have

\begin{prop}
\label{prop:existence-of-canonical-intertwiner-of-pi-psi}
For any $\rho\in\frak{g}_{\beta}(\Bbb F)\sphat$, there exists a group
homomorphism 
$U_{\beta,\rho}:\mathcal{C}\to GL_{\Bbb C}(V_{\beta,\rho})$ such that
\begin{enumerate}
\item $\pi_{\beta,\rho}(g^{-1}xg)
       =U_{\beta,\rho}(g)^{-1}\circ\pi_{\beta,\rho}(x)
                              \circ U_{\beta,\rho}(g)$ for all 
      $g\in\mathcal{C}$ and $x\in K_{l-1}(O_r)$ and
\item $U_{\beta,\rho}(h)=1$ for all $h\in\mathcal{C}\cap K_{l-1}(O_r)$.
\end{enumerate}
\end{prop}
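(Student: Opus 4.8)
The plan is to produce $U_{\beta,\rho}$ in two stages: first restrict the projective family $\{U(g)\}_{g\in G_l(O_r,\beta)^{(c)}}$ to $\mathcal{C}$ and trivialize its cocycle using Assumption \ref{hypo:existence-of-good-abelian-subgroup-of-g1(c)}(2), obtaining a genuine homomorphism $U_0$ satisfying the intertwining relation (1); then correct $U_0$ by a character of $\mathcal{C}$ so that it becomes trivial on $\mathcal{C}\cap K_{l-1}(O_r)$, which yields (2) without disturbing (1).

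For the first stage I would use the identity $c_U(g,h)=c_{\beta,\rho}(\overline g,\overline h)$ (established in Section \ref{sec:weil-representation-over-finite-field}), which exhibits $c_U|_{\mathcal{C}\times\mathcal{C}}$ as the inflation, along the reduction $\mathcal{C}\to\overline{\mathcal{C}}$, of $c_{\beta,\rho}|_{\overline{\mathcal{C}}\times\overline{\mathcal{C}}}$. By Assumption \ref{hypo:existence-of-good-abelian-subgroup-of-g1(c)}(2) the latter is a coboundary, say $c_{\beta,\rho}(\overline g,\overline h)=\nu(\overline g)\nu(\overline h)\nu(\overline{gh})^{-1}$ for a normalized $\nu\colon\overline{\mathcal{C}}\to\Bbb C^{\times}$. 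Setting $\mu=\nu\circ(\text{reduction})$ and $U_0(g)=\mu(g)^{-1}U(g)$, the relation $U(g)\circ U(h)=c_U(g,h)\cdot U(gh)$ shows at once that $U_0\colon\mathcal{C}\to GL_{\Bbb C}(V_{\beta,\rho})$ is a group homomorphism; since $U_0$ differs from $U$ only by scalars it still satisfies (1).

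For the second stage the structural point is the identification $G_l(O_r,\beta)^{(c)}\cap K_{l-1}(O_r)=Z(O_r,\beta)$. Indeed, for $h=(1+\varpi^{l-1}X)\npmod{\frak{p}^r}$ one has $\text{\rm Ad}(h)\beta\equiv\beta+\varpi^{l-1}[X,\beta]\npmod{\frak{p}^l}$ (the second-order terms lie in $\frak{p}^{2l-2}\subset\frak{p}^l$ as $l\geq 2$), so $\text{\rm Ad}(h)\beta\equiv\beta\npmod{\frak{p}^l}$ is equivalent to $[\overline X,\overline\beta]=0$, i.e. $\overline X\in\frak{g}_{\beta}(\Bbb F)$, while the condition $\text{\rm Ad}(\overline h)Y=Y$ holds automatically because $\overline h=1$; thus $h\in Z(O_r,\beta)$. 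Consequently every $h\in\mathcal{C}\cap K_{l-1}(O_r)$ lies in $Z(O_r,\beta)$, where $\pi_{\beta,\rho}(h)$ is the homothety $\psi(h)$ by Proposition \ref{prop:existence-of-fundamental-rep-associated-with-psi}. By Schur's lemma $U_0(h)$ is then a scalar, and (noting $\mu(h)=\nu(1)=1$) the map $h\mapsto U_0(h)$ is a character $\chi$ of the finite abelian group $\mathcal{C}\cap K_{l-1}(O_r)$. Since $\mathcal{C}$ is finite abelian and $\Bbb C^{\times}$ is divisible, $\chi^{-1}$ extends to a character $\widetilde\chi$ of $\mathcal{C}$, and I would set $U_{\beta,\rho}(g)=\widetilde\chi(g)U_0(g)$: multiplying a homomorphism by a character keeps it a homomorphism and a scalar does not affect conjugation, so (1) persists, while on $\mathcal{C}\cap K_{l-1}(O_r)$ we get $U_{\beta,\rho}(h)=\widetilde\chi(h)\chi(h)=1$, giving (2).

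The one genuinely delicate ingredient is the cocycle identity $c_U=c_{\beta,\rho}\circ(\text{reduction})$: it is precisely what lets the finite-field hypothesis of Assumption \ref{hypo:existence-of-good-abelian-subgroup-of-g1(c)}(2) feed into the trivialization on $\mathcal{C}$, and its proof is deferred to the Weil-representation construction of Section \ref{sec:weil-representation-over-finite-field}. Granting that, everything here is formal: a coboundary correction followed by a character extension, both standard for finite abelian groups.
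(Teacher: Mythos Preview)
Your proof is correct and follows essentially the same route as the paper's: trivialize the cocycle on $\mathcal{C}$ via the coboundary supplied by Assumption \ref{hypo:existence-of-good-abelian-subgroup-of-g1(c)}(2) to obtain a genuine homomorphism with the intertwining property, then observe that on $\mathcal{C}\cap K_{l-1}(O_r)\subset Z(O_r,\beta)$ it is scalar-valued and correct by an extended character. The only differences are cosmetic---you spell out the coboundary trivialization explicitly and invoke Schur directly on $U_0(h)$ (using that $\pi_{\beta,\rho}(h)$ is already scalar), whereas the paper first compares $U(h)$ with $\pi_{\beta,\rho}(h)$ via Schur and then uses that the latter is scalar.
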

\begin{proof}
Because of 2) in Assumption
\ref{hypo:existence-of-good-abelian-subgroup-of-g1(c)} there exists a
group homomorphism $U:\mathcal{C}\to GL_{\Bbb C}(V_{\beta,\rho})$ such that 
$\pi_{\beta,\rho}(g^{-1}xg)=U(g)^{-1}\circ\pi_{\beta,\rho}(x)\circ U(g)$ for all 
$g\in\mathcal{C}$ and $x\in K_{l-1}(O_r)$. Then for any 
$h\in\mathcal{C}\cap K_{l-1}(O_r)$ there exists a 
$c(h)\in\Bbb C^{\times}$ such that $U(h)=c(h)\cdot\pi_{\beta,\rho}(h)$. 
On the other hand we have 
$$
 G_l(O_r,\beta)^{(c)}\cap K_{l-1}(O_r)\subset Z(O_r,\beta)
$$
since 
$(1+\varpi^{l-1}X)_r\in G_l(O_r,\beta)^{(c)}\cap K_{l-1}(O_r)$
means that
\begin{align*}
 \beta
 &\equiv(1+\varpi^{l-1}X)\beta(1+\varpi^{l-1}X)^{-1}\npmod{\frak{p}^l}\\
 &\equiv(\beta+\varpi^{l-1}X\beta)(1-\varpi^{l-1}X)\npmod{\frak{p}^l}\\
 &\equiv\beta+\varpi^{l-1}[X,\beta]\npmod{\frak{p}^l}
\end{align*}
and then $[X,\beta]\equiv 0\npmod{\frak p}$, that is 
$X\npmod{\frak p}\in\frak{g}_{\beta}(\Bbb F)$. Then 
$\pi_{\beta,\rho}(h)$ is the homothety $\psi_{\beta,\rho}(h)$ for all 
$h\in\mathcal{C}\cap K_{l-1}(O_r)$. Extend the group homomorphism 
$h\mapsto c(h)\psi_{\beta,\rho}(h)$ of $\mathcal{C}\cap K_{l-1}(O_r)$ to a group
homomorphism $\theta:\mathcal{C}\to\Bbb C^{\times}$. Then 
$g\mapsto U_{\psi}(g)=\theta(g)^{-1}U(g)$ is the required group homomorphism.
\end{proof}

Let us denote by 
$\mathcal{C}\sphat{\times}_{K_{l-1}(O_r)}\frak{g}_{\beta}(\Bbb F)\sphat$
the set of 
$(\theta,\rho)
 \in\mathcal{C}\sphat\times\frak{g}_{\beta}(\Bbb F)\sphat$ such that 
$\theta=\psi_{\beta,\rho}$ on $\mathcal{C}\cap K_{l-1}(O_r)$. Then 
$(\theta,\rho)\in
 \mathcal{C}\sphat{\times}_{K_{l-1}(O_r)}\frak{g}_{\beta}(\Bbb F)\sphat$ 
defines an irreducible representation $\sigma_{\theta,\rho}$ of 
$G(O_r,\beta)=\mathcal{C}\cdot K_{l-1}(O_r)$ by
$$
 \sigma_{\theta,\rho}(gh)
 =\theta(g)\cdot U_{\beta,\rho}(g)\circ\pi_{\beta,\rho}(h)
$$
for $g\in\mathcal{C}$ and $h\in K_{l-1}(O_r)$. Then we have

\begin{prop}\label{prop:generalized-main-result-for-odd-r}
Under Assumption
\ref{hypo:existence-of-good-abelian-subgroup-of-g1(c)}, a bijection of 
${\mathcal C}\sphat{\times}_{K_{l-1}(O_r)}\frak{g}_{\beta}(\Bbb F)\sphat$
onto $\text{\rm Irr}(G(O_r,\beta)\mid\psi_{\beta})$ is given by 
$(\theta,\rho)\mapsto\sigma_{\theta,\rho}$.
\end{prop}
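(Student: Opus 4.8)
The plan is to treat this as the standard Clifford-theoretic extension problem for the normal subgroup $K_{l-1}(O_r)\trianglelefteq G(O_r,\beta)$, whose irreducible constituents lying over $\psi_{\beta}$ are precisely the $\pi_{\beta,\rho}$ of Proposition \ref{prop:existence-of-fundamental-rep-associated-with-psi}. First I would verify that each $\sigma_{\theta,\rho}$ is a well-defined irreducible member of $\text{\rm Irr}(G(O_r,\beta)\mid\psi_{\beta})$. The only ambiguity in the formula $\sigma_{\theta,\rho}(gh)=\theta(g)\,U_{\beta,\rho}(g)\circ\pi_{\beta,\rho}(h)$ comes from an element $c\in\mathcal{C}\cap K_{l-1}(O_r)$; for such $c$ one has $U_{\beta,\rho}(c)=1$ by Proposition \ref{prop:existence-of-canonical-intertwiner-of-pi-psi}, while $\mathcal{C}\cap K_{l-1}(O_r)\subset Z(O_r,\beta)$ (established in the proof of that proposition) makes $\pi_{\beta,\rho}(c)$ the scalar $\psi_{\beta,\rho}(c)$, and the fiber-product condition gives $\theta(c)=\psi_{\beta,\rho}(c)$; these three contributions cancel, so the formula is unambiguous. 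That $\sigma_{\theta,\rho}$ is a homomorphism follows from the intertwining relation of Proposition \ref{prop:existence-of-canonical-intertwiner-of-pi-psi} together with the normality of $K_{l-1}(O_r)$: writing $h_1g_2=g_2(g_2^{-1}h_1g_2)$ and absorbing the conjugation through $U_{\beta,\rho}(g_2)$ does the job. Since $\sigma_{\theta,\rho}$ restricts to $\pi_{\beta,\rho}$ on $K_{l-1}(O_r)$ it is irreducible, and since $\pi_{\beta,\rho}$ is the scalar $\psi_{\beta}$ on $K_l(O_r)$ it lies over $\psi_{\beta}$.

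For injectivity I would restrict an isomorphism $\sigma_{\theta,\rho}\cong\sigma_{\theta',\rho'}$ to $K_{l-1}(O_r)$, obtaining $\pi_{\beta,\rho}\cong\pi_{\beta,\rho'}$; since $\pi_{\beta,\rho}$ acts on $Z(O_r,\beta)$ by the scalar $\psi_{\beta,\rho}$ and $\rho\mapsto\psi_{\beta,\rho}$ is a bijection onto $Y_{\beta}$, this yields $\rho=\rho'$. Any intertwiner is then a self-intertwiner of the irreducible $\pi_{\beta,\rho}$, hence scalar by Schur's lemma, and comparing values on $\mathcal{C}$ forces $\theta=\theta'$.

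Surjectivity is the crux. Given $\sigma\in\text{\rm Irr}(G(O_r,\beta)\mid\psi_{\beta})$, I would first show that $\sigma|_{K_{l-1}(O_r)}$ is $\pi_{\beta,\rho}$-isotypic for a single $\rho$: some irreducible constituent of the restriction still contains $\psi_{\beta}$ on $K_l(O_r)$, hence contains some $\psi\in Y_{\beta}$ on $Z(O_r,\beta)$ because $Z(O_r,\beta)/K_l(O_r)\cong\frak{g}_{\beta}(\Bbb F)$ is abelian, and so equals one of the $\pi_{\beta,\rho}$; as $\mathcal{C}\subset G_l(O_r,\beta)^{(c)}$ stabilizes $\pi_{\beta,\rho}$ (the computation preceding Proposition \ref{prop:existence-of-canonical-intertwiner-of-pi-psi}) and $K_{l-1}(O_r)$ acts by inner automorphisms, the whole group $G(O_r,\beta)$ fixes the class of $\pi_{\beta,\rho}$, and Clifford theory makes the restriction isotypic. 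I would then identify $V_{\sigma}\cong W\otimes V_{\beta,\rho}$ with $\sigma(h)=\mathrm{id}_W\otimes\pi_{\beta,\rho}(h)$ on $K_{l-1}(O_r)$; for $g\in\mathcal{C}$ the operator $(\mathrm{id}_W\otimes U_{\beta,\rho}(g))^{-1}\sigma(g)$ commutes with $\pi_{\beta,\rho}(K_{l-1}(O_r))$ and hence equals $A(g)\otimes\mathrm{id}$, so $\sigma(g)=A(g)\otimes U_{\beta,\rho}(g)$ with $A\colon\mathcal{C}\to GL(W)$ a genuine representation --- here the honest homomorphism property of $U_{\beta,\rho}$ is essential. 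Irreducibility of $\sigma$ forces $A$ irreducible, and since $\mathcal{C}$ is abelian this gives $\dim W=1$ and $A=\theta\in\mathcal{C}\sphat$; evaluating on $\mathcal{C}\cap K_{l-1}(O_r)$ shows $\theta=\psi_{\beta,\rho}$ there, so $(\theta,\rho)$ lies in the fiber product and $\sigma\cong\sigma_{\theta,\rho}$.

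The step I expect to be the main obstacle is this isotypic/tensor decomposition: one must confirm that the stabilizer computation really upgrades ``$\mathcal{C}$ fixes each class of $\pi_{\beta,\rho}$'' to a single-$\rho$ isotypic restriction, and that the splitting $U_{\beta,\rho}$ furnished by Assumption \ref{hypo:existence-of-good-abelian-subgroup-of-g1(c)} is invoked at exactly the point where a merely projective intertwiner would block writing $\sigma(g)$ as a clean tensor product. This is precisely where the triviality of the Schur multiplier enters.
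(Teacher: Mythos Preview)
Your proof is correct and covers more ground than the paper's argument, which addresses only surjectivity explicitly and leaves well-definedness and injectivity implicit. The route, however, is genuinely different. The paper argues on the \emph{induction side}: it embeds a given $\sigma$ into $\text{\rm Ind}_{K_l(O_r)}^{G(O_r,\beta)}\psi_{\beta}$ by Frobenius reciprocity, then decomposes this induced representation in stages through $Z(O_r,\beta)$ and $K_{l-1}(O_r)$, using Proposition~\ref{prop:existence-of-fundamental-rep-associated-with-psi} to write $\text{\rm Ind}_{Z(O_r,\beta)}^{K_{l-1}(O_r)}\psi_{\beta,\rho}$ as copies of $\pi_{\beta,\rho}$ and then asserting $\text{\rm Ind}_{K_{l-1}(O_r)}^{G(O_r,\beta)}\pi_{\beta,\rho}=\bigoplus_{\theta}\sigma_{\theta,\rho}$; this last equality is where the splitting $U_{\beta,\rho}$ (hence the triviality of the Schur multiplier) is silently invoked, and the conclusion $\sigma=\sigma_{\theta,\rho}$ follows. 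You instead work on the \emph{restriction side}: you show $\sigma|_{K_{l-1}(O_r)}$ is $\pi_{\beta,\rho}$-isotypic for a single $\rho$, factor $V_{\sigma}\cong W\otimes V_{\beta,\rho}$, and use the genuine homomorphism $U_{\beta,\rho}$ to strip off a linear action $A$ of $\mathcal{C}$ on $W$, which abelianness forces to be one-dimensional. Your approach makes the role of Assumption~\ref{hypo:existence-of-good-abelian-subgroup-of-g1(c)} transparent --- it is exactly what turns a projective intertwiner into a tensor factorization --- whereas the paper's induction decomposition is shorter but hides this dependence inside the unproved direct-sum identity for $\text{\rm Ind}_{K_{l-1}(O_r)}^{G(O_r,\beta)}\pi_{\beta,\rho}$.
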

\begin{proof}
Clearly 
$\pi_{\theta,\rho}\in\text{\rm Irr}(G(O_r,\beta)\mid\psi_{\beta})$ for
all 
$(\theta,\rho)\in
 {\mathcal C}\sphat{\times}_{K_{l-1}(O_r)}\frak{g}_{\beta}(\Bbb F)\sphat$. 
Take a $\sigma\in\text{\rm Irr}(G(O_r,\beta)\mid\psi_{\beta})$. Then 
\begin{align*}
 \sigma\hookrightarrow
 \text{\rm Ind}_{K_l(O_r)}^{G(O_r,\beta)}\psi_{\beta}
 &=\text{\rm Ind}_{Z(O_r,\beta)}^{G(O_r,\beta)}
   \left(\text{\rm Ind}_{K_l(O_r)}^{Z(O_r,\beta)}\psi_{\beta}
         \right)\\
 &=\bigoplus_{\rho\in\frak{g}_{\beta}(\Bbb F)\sphat}
   \text{\rm Ind}_{Z(O_r,\beta)}^{G(O_r,\beta)}\psi_{\beta,\rho}
\end{align*}
so that there exists a $\rho\in\frak{g}_{\beta}(\Bbb F)\sphat$ such
that 
\begin{align*}
 \sigma\hookrightarrow
    \text{\rm Ind}_{Z(O_r,\beta)}^{G(O_r,\beta)}\psi_{\beta,\rho}
 &=\text{\rm Ind}_{K_{l-1}(O_r)}^{G(O_r,\beta)}
   \left(\text{\rm Ind}_{Z(O_r,\beta)}^{K_{l-1}(O_r)}\psi_{\beta,\rho}
         \right)\\
 &=\bigoplus^{\dim\pi_{\beta,\rho}}
   \text{\rm Ind}_{K_{l-1}(O_r)}^{G(O_r,\beta)}\pi_{\beta,\rho}
  =\bigoplus^{\dim\pi_{\beta,\rho}}\bigoplus_{\theta}\sigma_{\theta,\psi},
\end{align*}
where $\bigoplus_{\theta}$ is the direct sum over 
$\theta\in\mathcal{C}\sphat$ such that $\theta=\psi_{\beta,\rho}$ on 
$\mathcal{C}\cap K_{l-1}(O_r)$. 
Then we have $\sigma=\sigma_{\theta,\rho}$ for some 
$(\theta,\rho)\in
 {\mathcal C}\sphat{\times}_{K_{l-1}(O_r)}\frak{g}_{\beta}(\Bbb F)\sphat$.
\end{proof}

Under the conditions of Theorem \ref{th:main-result}, we can put 
$\mathcal{C}=G_{\beta}(O_r)$. We have the following proposition
by which our Theorem \ref{th:main-result} is given as a special case
of Proposition \ref{prop:generalized-main-result-for-odd-r}.

\begin{prop}\label{prop:fundamental-bijection-of-parameter}
If $G_{\beta}$ is commutative smooth over $O$, then 
$(\theta,\rho)\mapsto\theta$ gives a bijection of 
$G_{\beta}(\Bbb F)\sphat{\times}_{K_{l-1}(O_r)}\frak{g}_{\beta}(\Bbb F)\sphat$ 
onto the set
$$
  \left\{\theta\in G_{\beta}(O_r)\sphat\;\;\;
         \text{\rm s.t. $\theta=\psi_{\beta}$ on 
               $G_{\beta}(O_r)\cap K_l(O_r)$}\right\}.
$$
\end{prop}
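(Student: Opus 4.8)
The plan is to reduce the claimed bijection to the elementary character theory of the pair of subgroups
$$
 A=G_{\beta}(O_r)\cap K_{l-1}(O_r)\supseteq B=G_{\beta}(O_r)\cap K_l(O_r)
$$
of the finite abelian group $G_{\beta}(O_r)$ (abelian because $G_{\beta}$ is commutative, so that $G_{\beta}(O_r)\sphat$ makes sense and $A,B$ are abelian). Indeed, the target set consists of the $\theta\in G_{\beta}(O_r)\sphat$ with $\theta|_{B}=\psi_{\beta}|_{B}$, while the source set consists of pairs $(\theta,\rho)$ with $\theta\in G_{\beta}(O_r)\sphat$, $\rho\in\frak{g}_{\beta}(\Bbb F)\sphat$, and $\theta|_{A}=\psi_{\beta,\rho}|_{A}$. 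So everything is governed by the restriction maps $G_{\beta}(O_r)\sphat\to A\sphat\to B\sphat$ together with the description of $\psi_{\beta,\rho}|_{A}$.

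The one non-formal ingredient I would isolate first is the structure of $A$ and $B$. The centralizer condition forces $A\subseteq Z(O_r,\beta)$: if $g=(1+\varpi^{l-1}X)_r\in G_{\beta}(O_r)\cap K_{l-1}(O_r)$ then $\text{\rm Ad}(g)\beta=\beta$ gives in particular $\text{\rm Ad}(g)\beta\equiv\beta\pmod{\frak{p}^l}$, and the computation already carried out in the proof of Proposition \ref{prop:existence-of-canonical-intertwiner-of-pi-psi} yields $\overline X=\heartsuit(g)\in\frak{g}_{\beta}(\Bbb F)$, i.e. $g\in Z(O_r,\beta)$. Hence $\heartsuit|_{A}$ is defined, has image in $\frak{g}_{\beta}(\Bbb F)$, and has kernel $A\cap K_l(O_r)=B$. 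The key claim is then
$$
 \heartsuit:A\twoheadrightarrow\frak{g}_{\beta}(\Bbb F),
 \qquad A/B\;\tilde{\to}\;\frak{g}_{\beta}(\Bbb F).
$$
Since $A=\ker(G_{\beta}(O_r)\to G_{\beta}(O_{l-1}))$ and $B=\ker(G_{\beta}(O_r)\to G_{\beta}(O_{l}))$ are the congruence subgroups of the smooth commutative $O$-group scheme $G_{\beta}$, this surjectivity is exactly the assertion that the $(l-1)$-st congruence quotient $A/B$ is canonically the reduction $\frak{g}_{\beta}(\Bbb F)$, realized through $\heartsuit$; this is the analogue for $G_{\beta}$ of condition II) (equivalently of the truncated exponential in III)), and it is where the smoothness hypothesis is genuinely used. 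I expect this identification to be the main obstacle, the remainder being formal.

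Granting $A/B\cong\frak{g}_{\beta}(\Bbb F)$ via $\heartsuit$, the three verifications are routine. For well-definedness, restricting $\theta|_{A}=\psi_{\beta,\rho}|_{A}$ to $B\subseteq A$ and using $\psi_{\beta,\rho}\in Y_{\beta}$, i.e. $\psi_{\beta,\rho}|_{K_l(O_r)}=\psi_{\beta}$, gives $\theta|_{B}=\psi_{\beta}|_{B}$, so $\theta$ lies in the target. For injectivity, if $(\theta,\rho)$ and $(\theta,\rho')$ share the image $\theta$ then $\psi_{\beta,\rho}|_{A}=\psi_{\beta,\rho'}|_{A}$; since $\psi_{\beta,\rho}/\psi_{\beta,\rho'}=(\rho/\rho')\circ\heartsuit$ on $Z(O_r,\beta)\supseteq A$ and $\heartsuit(A)=\frak{g}_{\beta}(\Bbb F)$, this forces $\rho=\rho'$.

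For surjectivity, take $\theta$ in the target and form the character $\theta\cdot\widetilde{\psi}_{\beta}^{-1}$ of $A$ (legitimate since $A\subseteq Z(O_r,\beta)$, the domain of $\widetilde{\psi}_{\beta}$). Here $\widetilde{\psi}_{\beta}|_{K_l(O_r)}=\psi_{\beta}$, because $\rho\circ\heartsuit$ is trivial on $K_l(O_r)=\ker\heartsuit$, so $\psi_{\beta,\rho}$ and $\widetilde{\psi}_{\beta}$ agree there and both equal $\psi_{\beta}$. Combined with $\theta|_{B}=\psi_{\beta}|_{B}$ this gives $(\theta\cdot\widetilde{\psi}_{\beta}^{-1})|_{B}=1$, so the character $\theta\cdot\widetilde{\psi}_{\beta}^{-1}$ of $A$ is trivial on $B=\ker(\heartsuit|_{A})$ and factors through $A/B\cong\frak{g}_{\beta}(\Bbb F)$, producing a unique $\rho\in\frak{g}_{\beta}(\Bbb F)\sphat$ with $\rho\circ\heartsuit|_{A}=(\theta\cdot\widetilde{\psi}_{\beta}^{-1})|_{A}$, i.e. $\theta|_{A}=\psi_{\beta,\rho}|_{A}$. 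Then $(\theta,\rho)$ lies in the source and maps to $\theta$, which completes the bijection.
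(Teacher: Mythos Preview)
Your argument is correct and rests on the same idea as the paper's proof, but you have packaged it more cleanly. Both hinge on the surjectivity $\heartsuit|_A:A\twoheadrightarrow\frak{g}_{\beta}(\Bbb F)$; the paper establishes this explicitly by applying the truncated exponential of condition III) to $X\in\frak{g}_{\beta}(O)$, observing that $1+\varpi^{l-1}X+2^{-1}\varpi^{2l-2}X^2\npmod{\frak{p}^r}$ lies in $G_{\beta}(O_r)\cap K_{l-1}(O_r)=A$ (it commutes with $\beta$ because $X$ does) and maps to $\overline X$ under $\heartsuit$, and then uses the smoothness of $G_{\beta}$ to get $\frak{g}_{\beta}(O)\twoheadrightarrow\frak{g}_{\beta}(\Bbb F)$. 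This is precisely what you allude to but leave as a sketch; since you flag it as the main obstacle, you should write this one line out.

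For the bijection itself, the paper derives the explicit formula $\rho(\overline X)=\tau(-\varpi^{-l}B(X,\beta))\cdot\theta(1+\varpi^{l-1}X+2^{-1}\varpi^{2l-2}X^2\npmod{\frak{p}^r})$ and then checks by hand that it is independent of the lift $X$ (via $X'=X+\varpi T$) and that it is a group homomorphism. Your route --- recognizing that this formula is nothing but $(\theta\cdot\widetilde{\psi}_{\beta}^{-1})|_A$ evaluated on the truncated exponential, and that this character is trivial on $B$ and hence factors through $A/B\cong\frak{g}_{\beta}(\Bbb F)$ --- makes both verifications automatic. This is a genuine expository simplification over the paper's computation, though mathematically equivalent.
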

\begin{proof}
Take a 
$(\theta,\rho)\in 
 G_{\beta}(\Bbb F)\sphat{\times}_{K_{l-1}(O_r)}\frak{g}_{\beta}(\Bbb F)\sphat$.
The smoothness of $G_{\beta}$ over $O$ implies that the canonical
mapping $\frak{g}_{\beta}(O)\to\frak{g}_{\beta}(\Bbb F)$ is
surjective. So Take a $\overline X\in\frak{g}_{\beta}(\Bbb F)$ with 
$X\in\frak{g}_{\beta}(O)$. Then we have
$$
 g=1+\varpi^{l-1}X+2^{-1}\varpi^{2l-2}X^2\npmod{\frak{p}^r}
 \in K_{l-1}(O_r)\cap G_{\beta}(O_r)
$$
so that
\begin{align*}
 \theta(g)=\psi_{\beta,\rho}(g)
 &=\tau\left(\varpi^{-l}B(X+2^{-1}\varpi^{l-1}X^2,\beta)
             -2^{-1}\varpi^{-1}B(X,\beta)\right)\cdot
   \rho(\overline X)\\
 &=\tau\left(\varpi^{-l}B(X,\beta)\right)\cdot\rho(\overline X).
\end{align*}
Hence we have 
$$
 \rho(\overline X)
 =\tau\left(-\varpi^{-l}B(X,\beta)\right)\cdot
  \theta\left(1+\varpi^{l-1}X+2^{-1}\varpi^{2l-2}X^2\npmod{\frak{p}^r}
              \right).
$$
This means that the mapping $(\theta,\rho)\mapsto\theta$ is
injective. Take $X, X^{\prime}\in\frak{g}_{\beta}(O)$ such that 
$X\equiv X^{\prime}\pmod{\frak p}$. Then we have 
$X^{\prime}=X+\varpi T$ with $T\in\frak{g}_{\beta}(O)$ and
\begin{align*}
 &1+\varpi^{l-1}X^{\prime}
                   +2^{-1}\varpi^{2l-2}X^{\prime 2}\pmod{\frak{p}^r}\\
 =&1+\varpi^{-1}X+2^{-1}\varpi^{2l-2}X^2+\varpi^lT\pmod{\frak{p}^r}\\
 =&(1+\varpi^{l-1}X+2^{-1}\varpi^{2l-2}X^2)(1+\varpi^lT)
         \pmod{\frak{p}^r},
\end{align*}
where $1+\varpi^lT\pmod{\frak{p}^r}\in K_l(O_r)$ and hence
$$
 \theta(1+\varpi^lT\npmod{\frak{p}^r})
 =\psi_{\beta}(1+\varpi^lT\npmod{\frak{p}^r})
 =\tau\left(\varpi^{-(l-1)}B(T,\beta)\right).
$$
This and the commutativity of $G_{\beta}$ show that
$$
 \rho(\overline X)
 =\tau\left(-\varpi^{-l}B(X,\beta)\right)\cdot
  \theta\left(1+\varpi^{l-1}X+2^{-1}\varpi^{2l-2}X^2\npmod{\frak{p}^r}
              \right)
$$
with $\overline X\in\frak{g}_{\beta}(\Bbb F)$ with
$X\in\frak{g}_{\beta}(O)$ gives an well-defined group homomorphism of 
$\frak{g}_{\beta}(\Bbb F)$ to $\Bbb C^{\times}$. Then 
$(\theta,\rho)\in 
 G_{\beta}(O_r)\sphat{\times}_{K_{l-1}(O_r)}\frak{g}_{\beta}(\Bbb F)\sphat$ 
and our mapping in question is surjective.
\end{proof}
         
\subsection[]{}
\label{subsec:suff-condition-for-smooth-commutativeness-of-g-beta}
We will give a sufficient condition on $\beta\in\frak{g}(O)$ under which 
$G_{\beta}$ is commutative and smooth over $O$. 

Let us assume that the connected $O$-group scheme $G$ is reductive,
that is, the fibers $G{\otimes}_OK$ ($K=F,\Bbb F$) are reductive
$K$-algebraic groups. In this case the dimension of the maximal torus
in $G{\otimes}_OK$ is independent of $K$ which is denoted by 
$\text{\rm rank}(G)$. For any $\beta\in\frak{g}(O)$ we have
$$
 \dim_K\frak{g}_{\beta}(K)=\dim\frak{g}_{\beta}
 \geq\dim G_{\beta}{\otimes}K\geq\text{\rm rank}(G).
$$
We say $\beta$ to be {\it smoothly regular} with respect to $G$ over
$K$ (or $\overline\beta\in\frak{g}(K)$ is smoothly regular with
respect to $G{\otimes}_OK$) 
if $\dim_K\frak{g}_{\beta}(K)=\text{\rm rank}(G)$ 
(see \cite[1.4]{Springer1971}). In this case $G_{\beta}{\otimes}_OK$
is smooth over $K$. If $\beta$ is smoothly regular with respect to $G$
over $F$ and over $\Bbb F$, then $\beta$ is said to be smoothly regular
with respect to $G$. 

We say $\beta$ to be {\it connected} with respect to $G$ if the fibers 
$G_{\beta}{\otimes}_OK$ ($K=F, \Bbb F$) are connected. 

\begin{thm}
\label{th:sufficient-condition-for-smooth-commutativeness-of-g-beta}
If $\beta\in\frak{g}(O)$ is smoothly regular and connected with
respect to $G$, then $G_{\beta}$ is commutative and smooth over $O$.
\end{thm}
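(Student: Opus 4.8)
The plan is to treat smoothness and commutativity separately: I would get smoothness from the fibrewise criterion (flat $+$ smooth fibres $=$ smooth) and commutativity from the schematic density of the generic fibre. The first observation is that both fibres $G_{\beta}\otimes_O F$ and $G_{\beta}\otimes_O\Bbb F$ are already smooth, and this is exactly what smooth regularity buys: for $K=F,\Bbb F$ the finite-type $K$-group $G_{\beta}\otimes_O K$ has $\dim_K\frak{g}_{\beta}(K)=\text{rank}(G)=:d$, so $\dim\text{Lie}=\dim$ and the fibre is smooth of dimension $d$. Granting this, since $G_{\beta}$ is finite type (hence finitely presented, $O$ being noetherian) over $O$, by the fibrewise criterion (a finitely presented flat morphism with geometrically regular fibres is smooth) it remains only to prove that $G_{\beta}$ is flat over $O$.

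For flatness I would pass to the schematic closure $H\subseteq G_{\beta}$ of the generic fibre $(G_{\beta})_F$. By construction $H$ is $O$-flat, $H\otimes_O F=(G_{\beta})_F$ has dimension $d$, and $H\hookrightarrow G_{\beta}$ is a closed immersion. As $H$ is flat and of finite type over the discrete valuation ring $O$ with $d$-dimensional generic fibre, a top-dimensional component of $H$ dominates $\text{Spec}(O)$ and $\varpi$ is a non-zero-divisor along it, so cutting by $\varpi$ shows $\dim(H\otimes_O\Bbb F)=d$. But $H\otimes_O\Bbb F\subseteq G_{\beta}\otimes_O\Bbb F$, and the latter is smooth and, by the connectedness hypothesis, connected, hence irreducible, of dimension $d$; a closed subset of dimension $d$ of an irreducible $d$-dimensional space is the whole space, so $H\otimes_O\Bbb F\to G_{\beta}\otimes_O\Bbb F$ is surjective, and since $G_{\beta}\otimes_O\Bbb F$ is reduced this closed immersion is an isomorphism. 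Finally, writing $\frak{a}$ for the ideal of $H$ in $G_{\beta}$, the equality of generic fibres makes $\frak{a}$ a $\varpi$-power-torsion ideal while the equality of special fibres gives $\frak{a}\subseteq\varpi\,\mathcal{O}(G_{\beta})$; torsion-freeness of $\mathcal{O}(H)$ then forces $\frak{a}=\varpi\frak{a}$, whence $\frak{a}=0$ and $G_{\beta}=H$ is flat. The step that genuinely uses connectedness is precisely this one: without irreducibility of the special fibre the closure of the generic fibre could miss a component living only in characteristic $p$, and $G_{\beta}$ would fail to be flat.

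For commutativity I would invoke the classical structure theory: for a regular $\beta$ (that is, $\dim\frak{g}_{\beta}=\text{rank}(G)$) the centralizer of $\beta$ in a reductive group has abelian identity component, equivalently $\frak{g}_{\beta}$ is an abelian Lie algebra. Since $\beta$ is connected with respect to $G$, the generic fibre $G_{\beta}\otimes_O F$ equals its identity component and is therefore commutative. Now the commutator morphism $c\colon G_{\beta}\times_O G_{\beta}\to G_{\beta}$ and the constant morphism to the identity section agree on the generic fibre of $G_{\beta}\times_O G_{\beta}$, which—by the flatness just established—is schematically dense in the separated (affine) $O$-scheme $G_{\beta}\times_O G_{\beta}$. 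Two morphisms to the separated scheme $G_{\beta}$ that coincide on a schematically dense subscheme are equal, so $c$ is the constant map and $G_{\beta}$ is commutative.

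The main obstacle I anticipate is the flatness step, and within it the verification that the schematic closure of the generic fibre recovers the entire special fibre; this is the one place where the connectedness hypothesis is indispensable. Everything else is comparatively routine: the fibrewise smoothness criterion, the definition of smooth regularity, and the classical abelianness of the centralizer of a regular element.
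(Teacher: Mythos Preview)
Your proof is correct, and the commutativity half is essentially the paper's own argument (abelianness of the generic fibre from the classical theorem on centralizers of regular elements---the paper cites Lou via the Jordan decomposition $\beta=\beta_s+\beta_n$ and $G_\beta\otimes_O F=Z_{\mathcal G}(\beta_n)$---then spreading by schematic density). Your route to smoothness, however, is genuinely different. The paper invokes the SGA-3 machinery of the neutral component $G_\beta^o$: by Expos\'e $\text{VI}_B$, Th.~3.10 and Cor.~4.4, the condition ``fibres smooth of constant dimension'' is equivalent to ``$G_\beta^o$ is representable as a smooth open $O$-group subscheme of $G_\beta$,'' and connectedness of the fibres then forces $G_\beta^o=G_\beta$. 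You instead establish flatness by hand via the schematic closure of the generic fibre, using a dimension count together with irreducibility of the special fibre, and then appeal to the fibrewise criterion (flat $+$ smooth fibres $\Rightarrow$ smooth). Your approach is more elementary in that it avoids the representability theorem for the neutral component over a general base, at the cost of a longer verification; the paper's approach packages the flatness inside the cited SGA-3 results. Both arguments correctly isolate connectedness of the special fibre as the essential hypothesis---for you it gives irreducibility so that the closure of the generic fibre must fill the special fibre, for the paper it is what collapses $G_\beta^o$ to $G_\beta$.
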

\begin{proof}
Let $G_{\beta}^o$ be the neutral component of $O$-group scheme
$G_{\beta}$ which is a group functor of the category of $O$-scheme 
(see $\S 3$ of Expos\'e $\text{\rm VI}_B$ in \cite{SGA-3}).
The following statements are equivalent;
\begin{enumerate}
\item $G_{\beta}^o$ is representable as an smooth open $O$-group subscheme of
      $G_{\beta}$, 
\item $G_{\beta}$ is smooth at the points of unit section,
\item each fibers $G_{\beta}{\otimes}_OK$ ($K=F,\Bbb F$) are smooth
      over $K$ and their dimensions are constant
\end{enumerate}
(see Th. 3.10 and Cor. 4.4 of \cite{SGA-3}). So if $\beta$ is
smoothly regular with respect to $G$, then $G_{\beta}^o$ is smooth
open $O$-group subscheme of $G_{\beta}$. If further $\beta$ is connected
with respect to $G$, then $G_{\beta}^o=G_{\beta}$ is smooth over
$O$. Let $\beta=\beta_s+\beta_n$ be the Jordan decomposition of 
$\beta\in\frak{g}_{\beta}(F)$. Then the identity component 
$\mathcal{G}$ of the centralizer $Z_{G{\otimes}_OF}(\beta_s)$ is a
reductive $F$-algebraic group and 
$$
 G_{\beta}{\otimes}_OF=Z_{\mathcal{G}}(\beta_n)
$$
because $G_{\beta}{\otimes}_OF$ is connected. Then \cite{Lou1968}
shows that $G_{\beta}(\overline F)$ is commutative ($\overline F$ is
the algebraic closure of $F$), ans hence $G_{\beta}$ is a commutative
$O$-scheme. 
\end{proof}

\subsection[]{}\label{subsec:conjectures-on-triviality-of-c-beta-rho}
With the detailed discussion given in the section
\ref{sec:properties-of-c-beta-rho} and the results of 
\cite{Takase2016}, the truth of the following statement is highly
probable;
\begin{quote}
Assume that $G$ is connected smooth reductive $O$-group scheme. If 
$\beta\in\frak{g}(O)$ is smoothly regular with respect to $G$ over
$\Bbb F$, then the Schur multiplier 
$[c_{\beta,\rho}]\in H^2(G_{\beta}(\Bbb F)^{(c)},\Bbb C^{\times})$ is
trivial for all $\rho\in\frak{g}_{\beta}(\Bbb F)\sphat$ 
provided that the characteristic of $\Bbb F$ is big enough.
\end{quote}

\section{Weil representations over finite field}
\label{sec:weil-representation-over-finite-field}
In this section we will use the notations of the preceding sections
and will suppose $r=2l-1\geq 3$ is odd so that $l^{\prime}=l-1>0$. 
Fix a 
$\beta\in\frak{g}(O)$ such that $\overline\beta\in\frak{g}(\Bbb F)$ is
not in the center of $\frak{g}(\Bbb F)$. 

\subsection[]{}
\label{subsec:k-l-1-as-group-extension}
A group extension
\begin{equation}
 0\to\frak{g}(O_{l-1})\xrightarrow{\diamondsuit}
     K_{l-1}(O_r)\xrightarrow{\heartsuit}
     \frak{g}(\Bbb F)\to 0
\label{eq:k-l-1-as-group-extension}
\end{equation}
is given by the canonical surjection 
\eqref{eq:canonical-surjection-of-k-l-1-to-g(f)}, whose kernel is
$K_l(O_r)$, with the group isomorphism 
$$
 \diamondsuit:\frak{g}(O_{l-1})\,\tilde{\to}\,K_l(O_r)
$$
defined by 
$S\npmod{\frak{p}^{l-1}}\mapsto(1+\varpi^lS)\npmod{\frak{p}^r}$. 

In order to determine the 2-cocycle of the group extension 
\eqref{eq:k-l-1-as-group-extension}, choose any mapping 
$\lambda:\frak{g}(\Bbb F)\to\frak{g}(O)$ such that 
$X=\lambda(X)\npmod{\frak p}$ for all $X\in\frak{g}(\Bbb F)$ and 
$\lambda(0)=0$, and define a section
$$
 l:\frak{g}(\Bbb F)\to K_{l-1}(O_r)
$$
of \eqref{eq:k-l-1-as-group-extension} by 
$X\mapsto 1+\varpi^{l-1}\lambda(X)
           +2^{-1}\varpi^{2l-2}\lambda(X)^2\npmod{\frak{p}^r}$. 
Then we have
$$
 l(X)^{-1}
 =1-\varpi^{l-1}\lambda(X)+2^{-1}\varpi^{2l-2}\lambda(X)^2
  \npmod{\frak{p}^r}
$$
for all $X\in\frak{g}(\Bbb F)$ and
$$
 l(X)(1+\varpi^lS)l(X)^{-1}
 \equiv 1+\varpi^lS\npmod{\frak{p}^r}
$$
for all $S_{l-1}\in\frak{g}(O_{l-1})$. Furthermore we have
$$
 l(X)l(Y)l(X+Y)^{-1}
 =1+\varpi^l\left(\mu(X,Y)+2^{-1}\varpi^{l-2}[\lambda(X),\lambda(Y)]
                  \right)\npmod{\frak{p}^r}
$$
for all $X,Y\in\frak{g}(\Bbb F)$ where 
$\mu:\frak{g}(\Bbb F)\times\frak{g}(\Bbb F)\to\frak{g}(O)$ 
is defined by 
$$
 \lambda(X)+\lambda(Y)-\lambda(X+Y)
 =\varpi\cdot\mu(X,Y)
$$
for all $X,Y\in\frak{g}(\Bbb F)$. Now we have two elements 
($2$-cocycle) 
$$
 \mu=[(X,Y)\mapsto\mu(X,Y)_{l-1}],
 \quad
 c=[(\overline X,\overline Y)\mapsto 2^{-1}\varpi^{l-2}[X,Y]_{l-1}]
$$
of $Z^2(\frak{g}(\Bbb F),\frak{g}(O_{l-1}))$ 
with trivial action of $\frak{g}(\Bbb F)$ on $\frak{g}(O_{l-1})$. 

We will consider two groups $\Bbb M$ and $\Bbb G$ corresponding to the two
2-cocycles $\mu$ and $c$ respectively. That is the group operation on 
$\Bbb M=\frak{g}(\Bbb F)\times\frak{g}(O_{l-1})$ is defined by
$$
 (X,S_{l-1})\cdot(Y,T_{l-1})=(X+Y,(S+T+\mu(X,Y))_{l-1})
$$
and the group operation on 
$\Bbb G=\frak{g}(\Bbb F)\times\frak{g}(O_{l-1})$ is defined by 
$$
 (\overline X,S_{l-1})\cdot(\overline Y,T_{l-1})
 =((X+Y)\sphat,(S+T+2^{-1}\varpi^{l-2}[X,Y])_{l-1}).
$$
Let $\Bbb G{\times}_{\frak{g}(\Bbb F)}\Bbb M$ be the fiber product of 
$\Bbb G$ and $\Bbb M$ with respect to the canonical projections 
onto $\frak{g}(\Bbb F)$. In other word 
$$
 \Bbb G{\times}_{\frak{g}(\Bbb F)}\Bbb M
 =\left\{(X;S,T)=((X,S),(X,T))\in\Bbb G\times\Bbb M\right\}
$$
is a subgroup of the direct product $\Bbb G\times\Bbb M$. 
We have a surjective group homomorphism 
\begin{equation}
 (\ast):\Bbb G{\times}_{\frak{g}(\Bbb F)}\Bbb M\to K_{l-1}(O_r)
\label{eq:fundamental-surjection-for-general-alg-group}
\end{equation}
defined by 
\begin{align*}
 (X;S_{l-1},T_{l-1})\mapsto&
                      l(X)\cdot(1+\varpi^l(S+T)\npmod{\frak{p}^r})\\
         &=1+\varpi^{l-1}\lambda(X)
            +2^{-1}\varpi^{2l-2}\lambda(X)^2
            +\varpi^l(S+T)\npmod{\frak{p}^r}.
\end{align*}

\subsection[]{}\label{subsec:reduction-to-intermediate-group}
A group homomorphism of additive groups 
$B_{\beta}:\frak{g}(O_{l-1})\to O_{l-1}$ ($X\mapsto B(X,\beta_{l-1})$)
induces a group homomorphism 
$B_{\beta}^{\ast}:H^2(\frak{g}(\Bbb F),\frak{g}(O_{l-1}))\to 
                  H^2(\frak{g}(\Bbb F),O_{l-1})$. Let us denote by 
$\mathcal{H}_{\beta}$ the group associated with the 2-cocycle 
$$
 c_{\beta}=B_{\beta}\circ c
 =\left[(\overline X,\overline Y)\mapsto
        2^{-1}\varpi^{l-2}B([X,Y],\beta)_{l-1}\right]
 \in Z^2(\frak{g}(\Bbb F),O_{l-1}).
$$
That is $\mathcal{H}_{\beta}=\frak{g}(\Bbb F)\times O_{l-1}$ with a
group operation 
$$
 (\overline X,s)\cdot(\overline Y,t)
 =((X+Y)\sphat,s+t+2^{-1}\varpi^{l-2}B([X,Y],\beta)_{l-1}).
$$
Then the center of $\mathcal{H}_{\beta}$ is 
$Z(\mathcal{H}_{\beta})=\frak{g}_{\beta}(\Bbb F)\times O_{l-1}$, the
direct product of two additive groups $\frak{g}_{\beta}(\Bbb F)$ and
$O_{l-1}$. 

The inverse
image of $Z(\mathcal{H}_{\beta})$ with respect to the surjective group
homomorphism 
\begin{equation}
 \spadesuit:\Bbb G{\times}_{\frak{g}(\Bbb F)}\Bbb M\to\mathcal{H}_{\beta}
 \qquad
 \left((X;S_{l-1},T_{l-1})\mapsto
       (X,B(S,\beta)_{l-1})\right)
\label{eq:fund-surjection-on-hesenberg-like-group-for-general-alg-group}
\end{equation}
is 
$\left(\Bbb G{\times}_{\frak{g}(\Bbb F)}\Bbb M\right)_{\beta}
 =\left\{(X;S,T)\in\Bbb G{\times}_{\frak{g}(\Bbb F)}\Bbb M\mid
         X\in\frak{g}(\Bbb F)_{\overline\beta}\right\}$ 
which is 
mapped onto $Z(O_r,\beta)\subset K_{l-1}(O_r)$ by the surjection 
\eqref{eq:fundamental-surjection-for-general-alg-group}. 

Take a $\rho\in\frak{g}_{\beta}(\Bbb F)\sphat$ which defines group
homomorphisms
$$
 \chi_{\rho}
 =\rho\otimes\left[x_{l-1}\mapsto\tau(\varpi^{-(l-1)}x)\right]
 :Z(\mathcal{H}_{\beta})
  =\frak{g}(\Bbb F)_{\overline\beta}\times O_{l-1}
 \to\Bbb C^{\times}
$$
and
$$
 \widetilde{\chi}_{\rho}:
  \left(\Bbb G{\times}_{\frak{g}(\Bbb F)}\Bbb M\right)_{\beta}
  \xrightarrow{\spadesuit}Z(\mathcal{H}_{\beta})
  \xrightarrow{\chi_{\beta}}\Bbb C^{\times}.
$$
On the other hand we have a group homomorphism
$$
 \widetilde{\psi}_0:
 \Bbb G{\times}_{\frak{g}(\Bbb F)}\Bbb M\to\Bbb C^{\times}
$$
defined by 
$\widetilde{\psi}_0(X;S_{l-1},T_{l-1})
 =\tau\left(\varpi^{-l}B(\lambda(X)+\varpi T,\beta)\right)$, and 
$\widetilde{\psi}_0\cdot\widetilde{\chi}_{\beta}$ is trivial on the
 kernel of the surjection
 \eqref{eq:fundamental-surjection-for-general-alg-group}
 and induces a group homomorphism $\psi_{\beta,\rho}\in Y_{\beta}$
 defined in subsection
 \ref{subsec:description-of-main-result-for-odd-r}. 

\subsection[]{}
\label{subsec:schrodinger-representation}
Fix a $\rho\in\frak{g}_{\beta}(\Bbb F)\sphat$. 

Let us determine the 2-cocycle of the group extension
\begin{equation}
 0\to Z(\mathcal{H}_{\beta})\to\mathcal{H}_{\beta}
  \xrightarrow{\clubsuit}\Bbb V_{\beta}\to 0
\label{eq:central-extension-of-h-beta}
\end{equation}
where $\clubsuit:\mathcal{H}_{\beta}\to\Bbb V_{\beta}$
is defined by 
$(X,s)\mapsto\dot X\npmod{\frak{g}_{\beta}(\Bbb F)}$. 
Fix a 
$\Bbb F$-linear section $v\mapsto[v]$ of the exact sequence
$$
 0\to\frak{g}_{\beta}(\Bbb F)\to\frak{g}(\Bbb F)
  \to\Bbb V_{\beta}\to 0
$$
$\Bbb V$-vector spaces and define a section 
$l:\Bbb V_{\beta}\to\mathcal{H}_{\beta}$ of the group
  extension \eqref{eq:central-extension-of-h-beta} by 
$l(v)=([v],0)$. Then we have
$$
 l(u)l(v)l(u+v)^{-1}
 =(0,2^{-1}\varpi^{l-2}B([X,Y],\beta)\npmod{\frak{p}^{l-1}})
$$
for 
$u=\dot{\overline X}, v=\dot{\overline Y}
 \in\Bbb V_{\beta}$ so that the 2-cocycle of the group extension 
\eqref{eq:central-extension-of-h-beta} is
$$
 \left[(\dot{\overline X},\dot{\overline Y})\mapsto
       2^{-1}\varpi^{l-2}B([X,Y],\beta)\npmod{\frak{p}^{l-1}}\right]
 \in Z^2(\Bbb V_{\beta},O_{l-1}).
$$
Define a group operation on 
$\Bbb H_{\beta}=\Bbb V_{\beta}\times Z(\mathcal{H}_{\beta})$
by 
$$
 \left(\dot{\overline X},z\right)\cdot\left(\dot{\overline Y},w\right)
 =\left(\dot{\overline{X+Y}},
    z+w+2^{-1}\varpi^{l-2}B([X,Y],\beta)\npmod{\frak{p}^{l-1}}\right).
$$
Then $\Bbb H_{\beta}$ is isomorphic to $\mathcal{H}_{\beta}$ by 
$(v,(Y,s))\mapsto([v]+Y,s)$.

 Let $H_{\beta}$ be the Heisenberg group
of the symplectic $\Bbb F$-space $\Bbb V_{\beta}$, that is 
$H_{\beta}=\Bbb V_{\beta}\times\Bbb C^1$ with a group
operation
$$
 (u,s)\cdot(v,t)
 =\left(u+v,s\cdot t\cdot
            \widehat\tau(2^{-1}\langle u,v\rangle_{\beta})
        \right).
$$
Then we have a surjective group homomorphism 
$$
 \Bbb H_{\beta}=\Bbb V_{\beta}\times Z(\mathcal{H}_{\beta})\to H_{\beta}
 \qquad
 ((v,z)\mapsto(v,\chi_{\rho}(z)).
$$
Fix a polarization 
$\Bbb V_{\beta}=\Bbb W^{\prime}\oplus\Bbb W$ of the symplectic
$\Bbb F$-space $\Bbb V_{\beta}$. Let us denote by 
$L^2(\Bbb W^{\prime})$ 
the complex vector space of the complex-valued functions $f$ on
$\Bbb W^{\prime}$ with inner product 
$(f,f^{\prime})
 =\sum_{w\in\Bbb W^{\prime}}f(w)\overline{f^{\prime}(w)}$.
The Schr\"odinger representation $(\pi^{\beta},L^2(\Bbb W^{\prime}))$ 
of $H_{\beta}$ associated with the polarization is defined for 
$(v,s)\in H_{\beta}$ and $f\in L^2(\Bbb W^{\prime})$ by
$$
 \left(\pi^{\beta}(v,s)f\right)(w)
 =s\cdot\widehat\tau\left(
   2^{-1}\langle v_-,v_+\rangle_{\overline\beta}
   +\langle w,v_+\rangle_{\overline\beta}\right)\cdot f(w+v_-)
$$
where $v=v_-+v_+\in\Bbb V_{\beta}$ with 
$v_-\in\Bbb W^{\prime}, v_+\in\Bbb W^{\prime}$. 

Now an irreducible
representation $(\pi^{\beta,\rho},L^2(\Bbb W^{\prime}))$ of 
$\Bbb H_{\beta}$ is defined by 
$\pi^{\beta,\rho}(v,z)=\pi^{\beta}(v,\chi_{\rho}(z))$, and an
irreducible representation 
$(\widetilde\pi^{\beta,\rho},L^2(\Bbb W^{\prime}))$ of 
$\Bbb G{\times}_{\frak{g}(\Bbb F)}\Bbb M$ is defined by
$$
 \widetilde\pi^{\beta,\rho}:
 \Bbb G{\times}_{\frak{g}(\Bbb F)}\Bbb M
 \xrightarrow{\spadesuit}\mathcal{H}_{\beta}
 \,\tilde{\to}\,\Bbb H_{\beta}
 \xrightarrow{\pi^{\beta,\rho}}
 GL_{\Bbb C}(L^2(\Bbb W^{\prime})).
$$
Then $\widetilde{\psi}_0\cdot\widetilde{\pi}_{\beta,\rho}$ is trivial
on the kernel of 
$(\ast):\Bbb G{\times}_{\frak{g}(\Bbb F)}\Bbb M\to K_{l-1}(O_r)$ so
that it induces an irreducible representation 
$\pi^{\beta,\rho}$ of $K_{l-1}(O_r)$ on $L^2(\Bbb W^{\prime})$. 

\begin{prop}
\label{prop:another-expression-of-pi-beta-psi}
Take a $g=1+\varpi^{l-1}T\npmod{\frak{p}^r}\in K_{l-1}(O_r)$ 
with $T\in\frak{gl}_n(O)$. Then we have 
$T\npmod{\frak{p}^{l-1}}\in\frak{g}(O_{l-1})$ and
$$
 \pi^{\beta,\rho}(g)
  =\tau\left(\varpi^{-l}B(T,\beta)
            -2^{-1}\varpi^{-1}B(T^2,\beta)\right)\cdot
   \rho(Y)\cdot\pi^{\beta}(v,1)
$$
where $\overline T=[v]+Y\in\frak{g}(\Bbb F)$ with 
$v\in\Bbb V_{\beta}$ and 
$Y\in\frak{g}_{\beta}(\Bbb F)$. In particular
$\pi^{\beta,\rho}(h)$ is the homothety $\psi_{\beta,\rho}(h)$ for all 
$h\in Z(O_r,\beta)$.
\end{prop}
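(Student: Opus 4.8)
The plan is to unwind the definition of $\pi^{\beta,\rho}$ as the representation of $K_{l-1}(O_r)$ induced from $\widetilde{\psi}_0\cdot\widetilde{\pi}^{\beta,\rho}$ through the surjection $(\ast):\Bbb G{\times}_{\frak{g}(\Bbb F)}\Bbb M\to K_{l-1}(O_r)$, and to evaluate it on a carefully chosen preimage of $g$. Since $\widetilde{\psi}_0\cdot\widetilde{\pi}^{\beta,\rho}$ is trivial on $\ker(\ast)$, the value $\pi^{\beta,\rho}(g)$ is independent of the chosen preimage, so I am free to pick the most convenient one. First, by surjectivity of $(\ast)$ write $g=(\ast)(X;S,T_0)$; comparing with $g=1+\varpi^{l-1}T$ forces $X=\overline T$ and, reducing modulo $\frak{p}^{l-1}$, shows $T\equiv\lambda(X)+\varpi(S+T_0)\npmod{\frak{p}^{l-1}}$, which already establishes the preliminary assertion $T\npmod{\frak{p}^{l-1}}\in\frak{g}(O_{l-1})$ (as $\lambda$ takes values in $\frak{g}(O)$).

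Next I would normalize the preimage. The kernel of $(\ast)$ is the anti-diagonal $\{(0;c,-c)\mid c\in\frak{g}(O_{l-1})\}$, and multiplying by such an element (whose $\frak{g}(\Bbb F)$-coordinate is $0$, so the $\Bbb G$- and $\Bbb M$-cocycles contribute nothing) merely transfers mass between the two $\frak{g}(O_{l-1})$-coordinates. Hence I can choose the representative $\xi=(X;0,b)$ whose $\Bbb G$-coordinate vanishes, where solving $(\ast)(X;0,b)=g$ gives $b\equiv\varpi^{-1}(T-\lambda(X))-2^{-1}\varpi^{l-2}\lambda(X)^2\npmod{\frak{p}^{l-1}}$. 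Putting the whole discrepancy into the $\Bbb M$-coordinate is what cleanly separates the two factors, since $\spadesuit$ reads only the $\Bbb G$-coordinate while $\widetilde{\psi}_0$ reads only the $\Bbb M$-coordinate.

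Now I would compute the two factors. Because the $\Bbb G$-coordinate of $\xi$ is $0$, $\spadesuit(\xi)=(X,0)\in\mathcal{H}_{\beta}$, which under $\mathcal{H}_{\beta}\,\tilde{\to}\,\Bbb H_{\beta}$ corresponds to $(v,(Y,0))$ for the decomposition $\overline T=[v]+Y$ with $v\in\Bbb V_{\beta}$ and $Y\in\frak{g}_{\beta}(\Bbb F)$. Then $\pi^{\beta,\rho}(v,(Y,0))=\pi^{\beta}(v,\chi_{\rho}(Y,0))=\pi^{\beta}(v,\rho(Y))=\rho(Y)\cdot\pi^{\beta}(v,1)$, which is exactly the operator part claimed. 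For the scalar factor, $\lambda(X)+\varpi b\equiv T-2^{-1}\varpi^{l-1}\lambda(X)^2$, so $\widetilde{\psi}_0(\xi)=\tau(\varpi^{-l}B(T,\beta)-2^{-1}\varpi^{-1}B(\lambda(X)^2,\beta))$; since $\tau$ has conductor $O$, the second term only sees $\lambda(X)^2\equiv T^2\npmod{\frak p}$, giving $\widetilde{\psi}_0(\xi)=\tau(\varpi^{-l}B(T,\beta)-2^{-1}\varpi^{-1}B(T^2,\beta))$. Multiplying the two factors yields the displayed formula.

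The final clause is then immediate: for $h\in Z(O_r,\beta)$ one has $\overline T\in\frak{g}_{\beta}(\Bbb F)$, hence $v=0$, $Y=\overline T$ and $\pi^{\beta}(0,1)=\mathrm{id}$, so the formula reduces to the scalar $\tau(\varpi^{-l}B(T,\beta)-2^{-1}\varpi^{-1}B(T^2,\beta))\cdot\rho(\overline T)$, which is precisely $\psi_{\beta,\rho}(h)=\widetilde{\psi}_{\beta}(h)\cdot\rho(\heartsuit(h))$ of subsection \ref{subsec:description-of-main-result-for-odd-r}. The main obstacle I anticipate is not conceptual but is the $\varpi$-adic precision bookkeeping: because negative powers of $\varpi$ sit inside the conductor-$O$ character $\tau$ and inside $\chi_{\rho}$, one must track precisely modulo which power of $\frak p$ each of $\lambda(X)$, $\lambda(X)^2$ and $b$ is pinned down, and confirm that the residual freedoms (the choice of the section $\lambda$ and the leftover ambiguity in $b$) are harmless — which is guaranteed by the triviality of $\widetilde{\psi}_0\cdot\widetilde{\pi}^{\beta,\rho}$ on $\ker(\ast)$.
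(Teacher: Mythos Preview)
Your proof is correct and follows essentially the same approach as the paper: lift $g$ to the fiber product $\Bbb G\times_{\frak{g}(\Bbb F)}\Bbb M$, solve for the preimage in terms of $T$, and evaluate $\widetilde\psi_0$ and $\widetilde\pi^{\beta,\rho}$ separately, using $\lambda(\overline T)^2\equiv T^2\pmod{\frak p}$ to simplify. The only difference is cosmetic: you put the $\frak{g}(O_{l-1})$-part entirely into the $\Bbb M$-coordinate (preimage $(X;0,b)$), so the whole scalar arises from $\widetilde\psi_0$ and $\spadesuit(\xi)=(X,0)$ is clean, whereas the paper uses the preimage $(X;S,0)$, so the scalar $\tau(\varpi^{-(l-1)}B(S,\beta))$ enters via $\chi_\rho$ on the $\Bbb H_\beta$ side instead.
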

\begin{proof}
By the definition we have
\begin{align*}
 \pi^{\beta,\psi}(&l(X)(1+\varpi^lS\npmod{\frak{p}^r}))\\
 =&\psi_0(X,0)\cdot\widetilde\pi^{\beta,\rho}([v]+Y;S_{l-1},0)\\
 =&\psi\left(l(Y)(1+\varpi^lS\npmod{\frak{p}^r})\right)\cdot
   \tau\left(\varpi^{-l}B(\lambda(X)-\lambda(Y),\beta)\right)\cdot
   \pi^{\beta}(v,1)\\
 =&\tau\left(
     \varpi^{-(l-1)}B(S,\beta)+\varpi^{-l}B(\lambda(X),\beta)\right)
      \cdot\rho(Y)\cdot\pi^{\beta}(v,1)
\end{align*}
where $X=[v]+Y\in\frak{g}(\Bbb F)$ with $v\in\Bbb V_{\beta}$
and $Y\in\frak{g}_{\beta}(\Bbb F)$. 
Put $1+\varpi^{l-1}T\equiv l(X)(1+\varpi^lS)\npmod{\frak{p}^r}$ with 
$X\in\frak{g}(\Bbb F)$ and $S\in\frak{g}(O)$. Then we have
$$
 1+\varpi^{l-1}T\equiv 1+\varpi^{l-1}\lambda(X)
 \npmod{\frak{p}^l}
$$
so that we have $T\npmod{\frak p}=X\in\frak{g}(\Bbb F)$ and 
$$
 \varpi S\equiv 
 T-\lambda(\overline T)-2^{-1}\varpi^{l-1}\lambda(\overline T)^2
 \npmod{\frak{p}^l}.
$$
Then we have
\begin{align*}
 \pi^{\beta,\rho}(g)
 =&\pi^{\beta,\rho}\left(l(\overline T)(1+\varpi^lS)_r\right)\\
 =&\tau\left(\varpi^{-(l-1)}B(S,\beta)
             +\varpi^{-l}B(\lambda(\overline T),\beta)\right)\cdot
   \rho(Y)\cdot\pi^{\beta}(v,1)\\
 =&\tau\left(\varpi^{-l}B(T,\beta)-2^{-1}\varpi^{-1}B(T^2,\beta)
             \right)\cdot\rho(Y)\cdot\pi^{\beta}(v,1).
\end{align*}
\end{proof}

This proposition shows that the irreducible representation 
$(\pi^{\beta,\rho},L^2(\Bbb W^{\prime}))$ $K_{l-1}(O_r)$ 
is exactly the irreducible representation $\pi_{\beta,\rho}$ defined
in Proposition
\ref{prop:existence-of-fundamental-rep-associated-with-psi}. 

\subsection[]{}
\label{subsec:action-of-g-o-f-beta}
Fix a $\rho\in\frak{g}_{\beta}(\Bbb F)\sphat$. 
In this subsection we will study the conjugate action of 
$g_r=g\npmod{\frak{p}^r}\in G(O_r,\beta)$ on $K_{l-1}(O_r)$ and on 
$\pi^{\beta,\rho}$. For any $X\in\frak{g}(\Bbb F)$, we have
$$
 g_r^{-1}l(X)g_r
 =l\left(\text{\rm Ad}(\overline g)^{-1}X)
         +\varpi^l\nu(X,g)\npmod{\frak{p}^r}\right)
$$
with $\nu(X,g)\in\frak{g}(O)$ such that 
$$
 \text{\rm Ad}(g)^{-1}\lambda(X)-\lambda(\text{\rm Ad}(g)^{-1}X)
 =\varpi\cdot\nu(X,g).
$$
Then we have
\begin{align*}
 &g_r^{-1}l(X)(1+\varpi^l(S+T)_r)g_r\\
 =&l(\text{\rm Ad}(\overline g)^{-1}X)
   \left(1+\varpi^l(\text{\rm Ad}(g)^{-1}S+\text{\rm Ad}(g)^{-1}T
          +\nu(X,g))\right)\npmod{\frak{p}^r}
\end{align*}
and an action of $g_r\in G(O_r,\beta)$ on 
$(X;S_{l-1},T_{l-1})
 \in\Bbb G{\times}_{\frak{g}(\Bbb F)}\Bbb M$ is defined by 
\begin{equation}
 (X;S_{l-1},T_{l-1})^{g_r}
 =\left(\text{\rm Ad}(\overline g)^{-1}X;
         (\text{\rm Ad}(g)^{-1}S)_{l-1},
         (\text{\rm Ad}(g)^{-1}T+\nu(X,g))_{l-1}\right)
\label{eq:action-on-fibre-product}.
\end{equation}
The action \eqref{eq:action-on-fibre-product} is compatible with the
action  
$$
 (X,s)^{g_r}=(\text{\rm Ad}(\overline g)^{-1}X,s)
$$ 
of $g_r\in G(O_r,\beta)$ on $(X,s)\in\mathcal{H}_{\beta}$ via
the surjection
\eqref{eq:fund-surjection-on-hesenberg-like-group-for-general-alg-group}. 
If
we put $X=[v]+Y\in\frak{g}(\Bbb F)$ with $v\in\Bbb V_{\beta}$
and $Y\in\frak{g}_{\beta}(\Bbb F)$, then we have
$$
 \text{\rm Ad}(\overline g)^{-1}X
 =[v\sigma_{\overline g}]+\gamma(v,\overline g)
                        +\text{\rm Ad}(\overline g)^{-1}Y
$$
in the notations of subsection
\ref{subsec:def-of-schur-multiplier-associated-with-alg-group}. So 
$g_r\in G(O_r,\beta)$ acts on $(v,(Y,s))\in\Bbb H_{\beta}$ by 
$$
 (v,(Y,s))^{g_r}
 =\left(v\sigma_{\overline g},
   (\text{\rm Ad}(\overline g)^{-1}Y+\gamma(v,\overline g),s)\right).
$$
In particular $g_r\in G_l(O_r,\beta)^{(c)}$ acts on 
$(v,z)\in\Bbb H_{\beta}$ by 
$$
 (v,z)^{g_r}
 =(v\sigma_{\overline g},(\gamma(v,\overline g),0)\cdot z).
$$
There exists a group homomorphism 
$T:Sp(\Bbb V_{\beta})\to GL_{\Bbb C}(L^2(\Bbb W^{\prime}))$
  such that 
$$
 \pi^{\beta}(v\sigma,s)
 =T(\sigma)^{-1}\circ\pi^{\beta}(v,s)\circ T(\sigma)
$$
for all $\sigma\in Sp(\Bbb V_{\beta})$ and $(v,s)\in
H_{\beta}$ (see \cite[Th.2.4]{Gerardin1977}). 
Then we have
\begin{align*}
 \pi^{\beta,\rho}((v,z)^{g_r})
 =&\pi^{\beta,\rho}
    \left(v\sigma_{\overline g},(\gamma(v,\overline g),0)\cdot z\right)\\
 =&\pi^{\beta}
    \left(v\sigma_{\overline g},
           \rho(\gamma(v,\overline g))\cdot\chi_{\rho}(z)\right)\\
 =&\pi^{\beta}(v\sigma_{\overline g},
    \widehat\tau\left(\langle v,v_{\overline g}\rangle_{\beta}
                       \right)\cdot\chi_{\rho}(z))\\
 =&T(\sigma_{\overline g})^{-1}\circ
   \pi^{\beta}(v,
    \widehat\tau\left(\langle v,v_{\overline g}\rangle_{\beta}
                       \right)\cdot\chi_{\rho}(z))\circ
   T(\sigma_{\overline g})\\
 =&T(\sigma_{\overline g})^{-1}\circ
   \pi^{\beta}
    \left((v_{\overline g},1)^{-1}(v,\chi_{\rho}(z))(v_{\overline g},1)
           \right)\circ
   T(\sigma_{\overline g})\\
 =&T(\sigma_{\overline g})^{-1}\circ
   \pi^{\beta}(v_{\overline g},1)^{-1}\circ
   \pi^{\beta,\rho}(v,z)\circ
   \pi^{\beta}(v_{\overline g},1)\circ
   T(\sigma_{\overline g}).
\end{align*}
If we put
$$
 U(g_r)
 =\pi^{\beta}(v_{\overline g},1)\circ T(\sigma_{\overline g})
 \in GL_{\Bbb C}(L^2(\Bbb W^{\prime}))
$$
for $g_r\in G_l(O_r,\beta)^{(c)}$ then we have
$$
 U(g_r)\circ U(h_r)
 =c_{\beta,\rho}(\overline g,\overline h)\cdot U((gh)_r)
$$
for all $g_r,h_r\in G_l(O_r,\beta)^{(c)}$, in fact 
\begin{align*}
 U(g_r)\circ U(h_r)
 &=\pi^{\beta}(v_{\overline g},1)\circ T(\sigma_{\overline g})\circ 
   \pi^{\beta}(v_{\overline h},1)\circ T(\sigma_{\overline h})\\
 &=\pi^{\beta}(v_{\overline g},1)\circ 
   \pi^{\beta}(v_{\overline h}^{{\overline g}^{-1}},1)\circ
   T(\sigma_{\overline g})\circ T(\sigma_{\overline h})\\
 &=\pi^{\beta}\left(v_{\overline g}+v_{{\overline h}^{{\overline g}^{-1}}},
    \widehat\tau\left(
      2^{-1}\langle v_{\overline g},v_{{\overline h}^{{\overline g}^{-1}}}\rangle_{\beta}
                    \right)\right)\circ
   T(\sigma_{\overline{gh}})\\
 &=c_{\beta,\rho}(\overline g,\overline h)\cdot
   \pi^{\beta}(v_{\overline{gh}},1)\circ T(\sigma_{\overline{gh}}).
\end{align*}
On the other hand 
\begin{align*}
  \widetilde\psi_0\left((X;S_{l-1},T_{l-1})^{g_r^{-1}}
                       \right)
 &=\tau\left(\varpi^{-l}B(\lambda(X)+\varpi T,\text{\rm Ad}(g)\beta)
             \right)\\
 &=\tau\left(\varpi^{-l}B(\lambda(X)+\varpi T,\beta)
             \right)
\end{align*}
for all $g_r\in G(O_r,\beta)^{(c)}$. That is $\widetilde{\psi}_0$
is invariant under the conjugate action of $G_l(O_r,\beta)^{(c)}$. 
Hence we have
$$
 \pi^{\beta,\psi}(g_r^{-1}hg_r)
 =U(g_r)^{-1}\circ\pi^{\beta,\psi}(h)\circ
  U(g_r)
$$
for all $g_r\in G_l(O_r,\beta)^{(c)}$ and $h\in K_{l-1}(O_r)$. 

\section{Properties of $c_{\beta,\rho}$}
\label{sec:properties-of-c-beta-rho}
In this section we will present several properties of the Schur
multiplier 
$[c_{\beta,\rho}]\in H^2(G_{\beta}(\Bbb F)^{(c)},\Bbb C^{\times})$
defined in subsection
\ref{subsec:def-of-schur-multiplier-associated-with-alg-group}. 
We will keep the notations and conventions of section
\ref{sec:main-results}, while 
 only the structure of algebraic groups over finite field 
$\Bbb F$ is required to define and to discuss the Schur multiplier.

\subsection[]{}
\label{subsec:scalor-extension}
Let $\Bbb K/\Bbb F$ be a finite field extension. More specifically
take the unramified extension $K/F$ with the integer ring 
$O_K\subset K$ such that $\Bbb K=O_K/\varpi O_K$. Then 
$\frak{g}{\otimes}_OO_K$ is the Lie algebra of smooth $O_K$-group
scheme $G{\otimes}_OO_K$ and 
$$
 \frak{g}(\Bbb K)=(\frak{g}{\otimes}_OO_K)(\Bbb K)
                 =\frak{g}(\Bbb F){\otimes}_{\Bbb F}\Bbb K.
$$
So $B:\frak{g}(\Bbb K)\times\frak{g}(\Bbb K)\to\Bbb K$ is
non-degenerate. 

We will assume that the degree of the extension $(\Bbb K:\Bbb F)$ is
not divisible by the characteristic of $\Bbb F$, and put 
$T^{\prime}_{\Bbb K/\Bbb F}
 =(\Bbb K:\Bbb F)^{-1}T_{\Bbb K/\Bbb F}$. Then the additive character
$$
 \widetilde\tau:\Bbb K\xrightarrow{T^{\prime}_{\Bbb K/\Bbb F}}
                \Bbb F\xrightarrow{\widehat\tau}
                \Bbb C^{\times}
$$
is an extension of $\widehat\tau:\Bbb F\to\Bbb C^{\times}$. 

Fix a $\beta\in\frak{g}(O)$ such that 
$\frak{g}_{\beta}(\Bbb F)\lvertneqq\frak{g}(\Bbb F)$. Then for any 
$\rho\in\frak{g}_{\beta}(\Bbb F)$, the additive character
$$
 \widetilde\rho:(\frak{g}{\otimes}_OO_K)_{\beta}(\Bbb K)
                =(\frak{g}_{\beta})(\Bbb F){\otimes}_{\Bbb F}\Bbb K
                \xrightarrow{1\otimes T^{\prime}_{\Bbb K/\Bbb F}}
                \frak{g}_{\beta}(\Bbb F)\xrightarrow{\rho}
                \Bbb C^{\times}
$$
is an extension of $\rho:\frak{g}_{\beta}(\Bbb F)\to\Bbb C^{\times}$. 
Then we have 

\begin{prop}
\label{prop:scalor-extension-and-schur-multiplier-associated-with-alg-group}
The Schur multiplier 
$[c_{\beta,\rho}]\in H^2(G(\Bbb F)_{\beta}^{(c)},\Bbb C^{\times})$ is
the image of 
$[c_{\beta,\widetilde\rho}]
 \in H^2(G(\Bbb K)_{\beta}^{(c)},\Bbb C^{\times})$ under the
 restriction mapping 
$$
 \text{\rm Res}:H^2(G(\Bbb K)_{\beta}^{(c)},\Bbb C^{\times})
                \to
                H^2(G(\Bbb F)_{\beta}^{(c)},\Bbb C^{\times}).
$$
\end{prop}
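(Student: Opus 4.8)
The plan is to choose the auxiliary data over $\Bbb K$ so that it restricts to the chosen data over $\Bbb F$, and then to check that the two cocycles agree on the subgroup $G(\Bbb F)_\beta^{(c)}$ outright. First I would record the inclusions that make the statement meaningful. Since $\text{\rm Ad}(g)$ is $\Bbb K$-linear for $g\in G(\Bbb F)\subset G(\Bbb K)$ and $\frak{g}_\beta(\Bbb K)=\frak{g}_\beta(\Bbb F){\otimes}_{\Bbb F}\Bbb K$, any $g$ fixing $\frak{g}_\beta(\Bbb F)$ pointwise fixes $\frak{g}_\beta(\Bbb K)$ pointwise; hence $G(\Bbb F)_\beta^{(c)}\subset G(\Bbb K)_\beta^{(c)}$ and the restriction map is defined. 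I would also note at the outset that, because $(\Bbb K:\Bbb F)$ is invertible in $\Bbb F$, the map $T'_{\Bbb K/\Bbb F}$ restricts to the identity on $\Bbb F$; consequently $\widetilde\tau$ and $\widetilde\rho$ genuinely extend $\widehat\tau$ and $\rho$, a fact used repeatedly below.

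Next I would fix an $\Bbb F$-linear section $v\mapsto[v]$ of \eqref{eq:fundamental-exact-sequence-of-lie-alg-and-symplectic-space} and use its $\Bbb K$-linear extension as the section for the $\Bbb K$-datum. By Proposition \ref{prop:schur-multiplier-associated-with-finite-symplectic-space} the classes $[c_{\beta,\rho}]$ and $[c_{\beta,\widetilde\rho}]$ do not depend on these choices, so I am free to make them compatibly. With this choice the symplectic space attached to $\beta$ over $\Bbb K$ is $\Bbb V_{\beta}{\otimes}_{\Bbb F}\Bbb K$, its form is the $\Bbb K$-bilinear extension of $\langle\,,\,\rangle_{\beta}$, and for $g\in G(\Bbb F)_\beta^{(c)}$ both $\sigma_g$ and $\gamma(\cdot,g)$ computed over $\Bbb K$ are the $\Bbb K$-linear extensions of the corresponding $\Bbb F$-maps (here $\Bbb F$-rationality of $g$ is exactly what forces agreement on the subspace $\Bbb V_\beta$).

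The heart of the argument is the identity $v_g^{\Bbb K}=v_g$ for $g\in G(\Bbb F)_\beta^{(c)}$, where $v_g^{\Bbb K}\in\Bbb V_\beta{\otimes}_{\Bbb F}\Bbb K$ is the vector defined over $\Bbb K$ from $\widetilde\rho$ and $\widetilde\tau$ and $v_g$ is identified with $v_g\otimes 1$. Both $v\mapsto\widetilde\rho(\gamma(v,g))$ and $v\mapsto\widetilde\tau(\langle v,v_g\rangle_\beta)$ are characters of the additive group $\Bbb V_\beta{\otimes}_{\Bbb F}\Bbb K$, so it suffices to compare them on the generators $u\otimes\lambda$ with $u\in\Bbb V_\beta$, $\lambda\in\Bbb K$. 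On such an element the left side becomes $\rho\bigl(T'_{\Bbb K/\Bbb F}(\lambda)\cdot\gamma(u,g)\bigr)$; using that $\gamma(\cdot,g)$ is $\Bbb F$-linear and the defining relation of $v_g$ over $\Bbb F$, this equals $\widehat\tau\bigl(T'_{\Bbb K/\Bbb F}(\lambda)\cdot\langle u,v_g\rangle_\beta\bigr)$, which is precisely the right side after unwinding $\widetilde\tau$. By uniqueness this gives $v_g^{\Bbb K}=v_g$. Feeding this into the formula for $c_{\beta,\widetilde\rho}$ and using once more that $T'_{\Bbb K/\Bbb F}$ is the identity on $\Bbb F$ (the scalar $\langle v_g,v_{gh}\rangle_\beta$ lies in $\Bbb F$), I obtain $c_{\beta,\widetilde\rho}(g,h)=c_{\beta,\rho}(g,h)$ for all $g,h\in G(\Bbb F)_\beta^{(c)}$; that is, the restricted cocycle equals $c_{\beta,\rho}$, whence $\text{\rm Res}[c_{\beta,\widetilde\rho}]=[c_{\beta,\rho}]$.

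The step I expect to be the main obstacle is the bookkeeping in the identity $v_g^{\Bbb K}=v_g$: one has to keep straight that $\widetilde\rho$ and $\widetilde\tau$ are built from $\rho$ and $\widehat\tau$ by precomposition with $T'_{\Bbb K/\Bbb F}$, that $\gamma(\cdot,g)$ and the symplectic form become $\Bbb K$-linear after extension while $\rho$ and $\widehat\tau$ remain mere characters, and that the normalization $(\Bbb K:\Bbb F)^{-1}$ is exactly what makes $T'_{\Bbb K/\Bbb F}$ act as the identity on the subfield $\Bbb F$. Once these compatibilities are in place the computation is forced, and the conclusion follows with the cocycles coinciding rather than merely being cohomologous.
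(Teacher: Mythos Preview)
Your argument is correct and follows essentially the same route as the paper: extend the $\Bbb F$-linear section $\Bbb K$-linearly, verify that for $g\in G(\Bbb F)_\beta^{(c)}$ the element $v_g$ computed over $\Bbb K$ coincides with $v_g\otimes 1$, and conclude that the cocycles agree on the nose. The only cosmetic difference is that the paper fixes an $\Bbb F$-basis $\{\alpha_\lambda\}$ of $\Bbb K$ with $T_{\Bbb K/\Bbb F}(\alpha_\lambda)=1$ and computes on $v=\sum_\lambda v_\lambda\otimes\alpha_\lambda$, whereas you test directly on simple tensors $u\otimes\lambda$; the underlying use of $\Bbb F$-linearity of $\gamma(\cdot,g)$ and of the normalized trace is identical.
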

\begin{proof}
There exists a $\Bbb F$-basis
$\{\alpha_{\lambda}\}_{\lambda\in\Lambda}$ of $\Bbb K$ such that 
$T_{\Bbb K/\Bbb F}(\alpha_{\lambda})=1$ for all $\lambda\in\Lambda$. 
Let $v\mapsto[v]$ be a $\Bbb F$-linear section of the exact sequence 
\eqref{eq:fundamental-exact-sequence-of-lie-alg-and-symplectic-space}. 
Its $\Bbb K$-linear extension gives a $\Bbb K$-linear section of the
exact sequence 
$$
 0\to\frak{g}_{\beta}(\Bbb F){\otimes}_{\Bbb F}\Bbb K
  \to\frak{g}(\Bbb F){\otimes}_{\Bbb F}\Bbb K
  \to\Bbb V_{\beta}{\otimes}_{\Bbb F}\Bbb K\to 0.
$$
Take a 
$g\in G(\Bbb F)_{\beta}^{(c)}\subset G(\Bbb K)_{\beta}^{(c)}$ and
$$
 v=\sum_{\lambda\in\Lambda}v_{\lambda}\otimes\alpha_{\lambda}
 \in\Bbb V_{\beta}{\otimes}_{\Bbb F}\Bbb K
 \qquad
 (v_{\lambda}\in\Bbb V_{\beta}). 
$$
Then we have
\begin{align*}
 \gamma(v,g)
 &=\text{\rm Ad}(g)^{-1}[v]-[v\sigma_g]\\
 &=\sum_{\lambda\in\Lambda}
    \left(\text{\rm Ad}(g)^{-1}[v_{\lambda}]-[v_{\lambda}\sigma_g]
           \right)\otimes\alpha_{\lambda}
  =\sum_{\lambda\in\Lambda}\gamma(v_{\lambda},g)\otimes\alpha_{\lambda},
\end{align*}
hence 
\begin{align*}
 \widetilde\rho\left(\gamma(v,g)\right)
 &=\prod_{\lambda\in\Lambda}\rho\left(
    \gamma((\Bbb K:\Bbb F)^{-1}v_{\lambda},g)\right)
  =\prod_{\lambda\in\Lambda}\widehat\tau\left(
    (\Bbb K:\Bbb F)^{-1}\langle
  v_{\lambda},v_g\rangle_{\beta}\right)\\
 &=\widetilde\tau\left(
    \sum_{\lambda\in\Lambda}\langle v_{\lambda},v_g\rangle_{\beta}
                        \alpha_{\lambda}\right)
  =\widetilde\tau\left(\langle v,v_g\rangle_{\beta}\right).
\end{align*}
So we have $c_{\beta,\rho}(g,h)=c_{\beta,\widetilde\rho}(g,h)$ for all 
$g,h\in G(\Bbb F)_{\beta}^{(c)}\subset G(\Bbb K)_{\beta}^{(c)}$.
\end{proof}

\subsection[]{}
\label{subsec:schur-multiplier-and-over-group}
Let us assume that there exists a closed smooth $O$-group subscheme 
$H\subset GL_n$ of which our $G$ is a closed $O$-group subscheme and
that the trace form
$$
 B:\frak{h}(\Bbb F)\times\frak{h}(\Bbb F)\to\Bbb F
$$ 
is non-degenerate where $\frak{h}$ is the Lie algebra of $H$. 
Then we have 
$\frak{h}(\Bbb F)=\frak{g}(\Bbb F)\oplus\frak{g}(\Bbb F)^{\perp}$ where 
$\frak{g}(\Bbb F)^{\perp}
 =\{X\in\frak{h}(\Bbb F)\mid\widetilde B(X,\frak{g}(\Bbb F))=0\}$ 
is the orthogonal complement of $\frak{g}(\Bbb F)$ in 
$\frak{h}(\Bbb F)$. 

Take a $\beta\in\frak{g}(O)$ such that 
$\frak{g}_{\beta}(\Bbb F)\lvertneqq\frak{g}(\Bbb F)$. Then 
$\beta\in\frak{h}(O)$ and 
$\frak{h}_{\beta}(\Bbb F)\lvertneqq\frak{h}(\Bbb F)$ where 
$\frak{h}_{\beta}=Z_{\frak h}(\beta)$ is the centralizer. We have
decompositions 
$\frak{h}_{\beta}(\Bbb F)
 =\frak{g}_{\beta}(\Bbb F)\oplus
  \left(\frak{g}(\Bbb F)^{\perp}\right)_{\beta}$ where 
$\left(\frak{g}(\Bbb F)^{\perp}\right)_{\beta}
 =\frak{h}_{\beta}(\Bbb F)\cap\frak{g}(\Bbb F)^{\perp}$, and
$$
 \widetilde{\Bbb V}_{\beta}
 =\frak{h}(\Bbb F)/\frak{h}_{\beta}(\Bbb F)
 =\Bbb V_{\beta}\oplus
  \left(\frak{g}(\Bbb F)^{\perp}/
   \left(\frak{g}(\Bbb F)^{\perp}\right)_{\beta}\right)
$$
is an orthogonal decomposition of symplectic spaces. 

Let $v\mapsto[v]$ be a $\Bbb F$-linear section of the exact sequence 
$$
 0\to\frak{h}_{\beta}(\Bbb F)\to\frak{h}(\Bbb F)
  \to\widetilde{\Bbb V}_{\beta}\to 0
$$
of $\Bbb F$-vector space such that $[\Bbb V_{\beta}]=\frak{g}(\Bbb F)$
and  
$[\frak{g}(\Bbb F)^{\perp}/\left(\frak{g}(\Bbb F)^{\perp}\right)_{\beta}]
 =\frak{g}(\Bbb F)^{\perp}$. 

Take $\rho\in\frak{g}_{\beta}(\Bbb F)\sphat$ and put
$$
 \widetilde\rho:\frak{h}_{\beta}
 =\frak{g}_{\beta}\oplus\left(\frak{g}^{\perp}\right)_{\beta}
 \xrightarrow{\text{\rm projection}}\frak{g}_{\beta}
 \xrightarrow{\rho}\Bbb C^{\times}.
$$
For any $g\in G_{\beta}(\Bbb F)\subset H_{\beta}(\Bbb F)$, there
exists uniquely a $v_g\in\Bbb V_{\beta}$ such that
$$
 \rho\left(\gamma_{\frak g}(v,g)\right)
 =\widehat\tau\left(\langle v,v_g\rangle_{\beta}\right)
$$
for all $v\in\Bbb V_{\beta}$. Then we have
$$
 \widetilde\rho\left(\gamma_{\frak{h}}(v,v_g)\right)
 =\widehat\tau\left(\langle v,v_g\rangle_{\beta}\right)
$$
for all $v\in\widetilde{\Bbb V}_{\beta}$. In fact if we put 
$v=v^{\prime}+v^{\prime\prime}$ with $v^{\prime}\in\Bbb V_{\beta}$ and 
$v^{\prime\prime}\in
 \frak{g}(\Bbb F)^{\perp}/\left(\frak{g}(\Bbb F)^{\perp}\right)_{\beta}$,
then we have 
$\gamma_{\frak{h}}(v,g)
 =\gamma_{\frak g}(v^{\prime},g)
 +\gamma_{\frak{h}}(v^{\prime\prime},g)$ with 
$\gamma_{\frak{h}}(v^{\prime\prime},g)\in
 \left(\frak{g}(\Bbb F)^{\perp}\right)_{\beta}$, since 
$$
 \text{\rm Ad}(g)\frak{g}(\Bbb F)^{\perp}=\frak{g}(\Bbb F)^{\perp},
 \qquad
 \text{\rm Ad}(g)\left(\frak{g}(\Bbb F)^{\perp}\right)_{\beta}
 =\left(\frak{g}(\Bbb F)^{\perp}\right)_{\beta}.
$$
Then we have
\begin{align*}
 \widetilde\rho\left(\gamma_{\frak{h}}(v,g)\right)
 &=\rho\left(\gamma_{\frak g}(v^{\prime},g)\right)
  =\widehat\tau\left(\langle v^{\prime},v_g\rangle_{\beta}\right)\\
 &=\widehat\tau\left(\langle v,v_g\rangle_{\beta}\right)
\end{align*}
because $\langle v^{\prime\prime},v_g\rangle_{\beta}=0$. Hence we have

\begin{prop}
\label{prop:regular-schur-multiplier-is-restriction-from-upper-group}
If $G(\Bbb F)_{\beta}^{(c)}\subset H(\Bbb F)_{\beta}^{(c)}$ then 
the Schur multiplier 
      $[c_{\beta,\rho}]
       \in H^2(G(\Bbb F)_{\beta}^{(c)},\Bbb C^{\times})$ is the image
       under the restriction mapping 
$$
 \text{\rm Res}:H^2(H(\Bbb F)_{\beta}^{(c)},\Bbb C^{\times})\to
                H^2(G(\Bbb F)_{\beta}^{(c)},\Bbb C^{\times})
$$
      of the Schur multiplier 
      $[c_{\beta,\widetilde\rho}]
       \in H^2(H(\Bbb F)_{\beta}^{(c)},\Bbb C^{\times})$.
\end{prop}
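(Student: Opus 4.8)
The plan is to prove the equality of cohomology classes by showing that, after restriction to $G(\Bbb F)_{\beta}^{(c)}$, the two $2$-cocycles coincide on the nose; no spectral-sequence or transfer argument is then needed. Concretely, I would prove that
$$
 c_{\beta,\widetilde\rho}(g,h)=c_{\beta,\rho}(g,h)
 \qquad\text{for all }g,h\in G(\Bbb F)_{\beta}^{(c)},
$$
where the left-hand side is the $H$-cocycle attached to $\widetilde\rho$, evaluated on the subgroup $G(\Bbb F)_{\beta}^{(c)}\subset H(\Bbb F)_{\beta}^{(c)}$. The inclusion is exactly the hypothesis of the proposition, and it is what makes the restriction map, hence the assertion $\text{\rm Res}\,[c_{\beta,\widetilde\rho}]=[c_{\beta,\rho}]$, meaningful.

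The heart of the matter is to identify the two distinguished vectors. For $g\in G(\Bbb F)_{\beta}^{(c)}\subset G_{\beta}(\Bbb F)$ let $v_g\in\Bbb V_{\beta}$ be the vector attached to $g$ by $\rho$, and let $\widetilde v_g\in\widetilde{\Bbb V}_{\beta}$ be the one attached to $g$ by $\widetilde\rho$. I claim $\widetilde v_g=v_g$ under the inclusion $\Bbb V_{\beta}\hookrightarrow\widetilde{\Bbb V}_{\beta}$. This is precisely the computation carried out just before the statement: there it is shown that
$$
 \widetilde\rho\left(\gamma_{\frak h}(v,g)\right)
 =\widehat\tau\left(\langle v,v_g\rangle_{\beta}\right)
 \qquad(v\in\widetilde{\Bbb V}_{\beta}),
$$
and since $\widetilde v_g$ is by definition the unique vector with this defining property, uniqueness forces $\widetilde v_g=v_g$. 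I would make explicit the two ingredients that this identity rests on: the orthogonality in $\widetilde{\Bbb V}_{\beta}=\Bbb V_{\beta}\oplus\bigl(\frak{g}(\Bbb F)^{\perp}/(\frak{g}(\Bbb F)^{\perp})_{\beta}\bigr)$ kills the cross term $\langle v'',v_g\rangle_{\beta}=0$, while the projection definition of $\widetilde\rho$ annihilates the contribution $\gamma_{\frak h}(v'',g)\in(\frak{g}(\Bbb F)^{\perp})_{\beta}$.

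With this in hand I would check the compatibility of the symplectic data and then substitute. The form $\langle\cdot,\cdot\rangle_{\beta}$ on $\widetilde{\Bbb V}_{\beta}$ restricts to $\langle\cdot,\cdot\rangle_{\beta}$ on $\Bbb V_{\beta}$, both being $B([X,Y],\overline\beta)$, and for $g\in G(\Bbb F)_{\beta}^{(c)}$ the automorphism $\sigma_g$ computed inside $H$ preserves $\Bbb V_{\beta}$ and restricts there to the $\sigma_g$ computed inside $G$, because $\text{\rm Ad}(g)$ stabilizes both $\frak{g}(\Bbb F)$ and $\frak{g}(\Bbb F)^{\perp}$. Consequently $gh\in G(\Bbb F)_{\beta}^{(c)}$ gives $\widetilde v_{gh}=v_{gh}\in\Bbb V_{\beta}$ as well, and the defining formulae yield
$$
 c_{\beta,\widetilde\rho}(g,h)
 =\widehat\tau\bigl(2^{-1}\langle\widetilde v_g,\widetilde v_{gh}\rangle_{\beta}\bigr)
 =\widehat\tau\bigl(2^{-1}\langle v_g,v_{gh}\rangle_{\beta}\bigr)
 =c_{\beta,\rho}(g,h)
$$
for all $g,h\in G(\Bbb F)_{\beta}^{(c)}$. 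Since the two cocycles literally agree on $G(\Bbb F)_{\beta}^{(c)}\times G(\Bbb F)_{\beta}^{(c)}$, their classes agree, which is the assertion.

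I do not expect a serious obstacle here: once the preceding identity $\widetilde v_g=v_g$ is available, the proof is a direct transport of it through the cocycle formula. The only point genuinely requiring care is the bookkeeping of sections, namely that the $\Bbb F$-linear section for $\frak h$ is taken compatibly with the one for $\frak g$, as is arranged in the text by imposing $[\Bbb V_{\beta}]=\frak{g}(\Bbb F)$ and $[\frak{g}(\Bbb F)^{\perp}/(\frak{g}(\Bbb F)^{\perp})_{\beta}]=\frak{g}(\Bbb F)^{\perp}$; by Proposition \ref{prop:schur-multiplier-associated-with-finite-symplectic-space} the resulting class is independent of this choice, so no generality is lost in making it.
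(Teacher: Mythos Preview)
Your proposal is correct and follows essentially the same approach as the paper: the paper's proof is the computation immediately preceding the proposition, which establishes that the $v_g$ attached to $\rho$ already satisfies the defining property of $\widetilde v_g$ on all of $\widetilde{\Bbb V}_{\beta}$, whence $\widetilde v_g=v_g$ by uniqueness; your write-up makes explicit the final substitution into the cocycle formula that the paper leaves implicit after ``Hence we have''.
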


\subsection[]{}\label{subsec:schur-multiplier-and-jordan-decomposition}
Take a $\beta\in\frak{g}(O)$ such that 
$\frak{g}_{\beta}(\Bbb F)\lvertneqq\frak{g}(\Bbb F)$. 

Let $\overline\beta=\beta_s+\beta_n$ be the Jordan decomposition of 
$\overline\beta=\beta\npmod{\frak p}\in\frak{g}(\Bbb F)$ 
($\beta_s,\beta_n\in\frak{g}(\Bbb F)$ are repectively the semisimple
part and the nilpotent part of $\overline\beta$). Put
$$
 L=Z_{G{\otimes}_O\Bbb F}(\beta_s),
 \quad
 \frak{l}=\text{\rm Lie}(L)(\Bbb F)
         =Z_{\frak{g}(\Bbb F)}(\beta_s).
$$
Let us assume the following assumption;

\begin{assu}\label{assume:b-over-l-is-non-degenerate}
$B:\frak{l}\times\frak{l}\to\Bbb F$ is non-degenerate.
\end{assu}

Then we have $\frak{g}(\Bbb F)=\frak{l}\oplus\frak{l}^{\perp}$ where
$$
 \frak{l}^{\perp}
 =\{X\in\frak{g}(\Bbb F)\mid B(X,\frak{l})=0\}
$$
is an $\text{\rm Ad}(L(\Bbb F))$-submodule of $\frak{g}(\Bbb F)$. Hence there
exists a $\Bbb F$-vector subspace 
$\frak{l}^{\perp}\subset V\subset\frak{g}(\Bbb F)$ such that 
$\frak{g}(\Bbb F)=V\oplus\frak{g}_{\beta}(\Bbb F)$. We can fix a 
$\Bbb F$-linear section $v\mapsto[v]$ 
on $\Bbb V_{\beta}$ of the exact sequence 
$$
 0\to\frak{g}_{\beta}(\Bbb F)\to\frak{g}(\Bbb F)\to\Bbb V_{\beta}
  \to 0
$$
such that $[v]\in V$ for all $v\in\Bbb V_{\beta}$. Then we have

\begin{prop}\label{prop:schur-multiplier-and-jordan-decomposition}
Fix a $\rho\in\frak{g}_{\beta}(\Bbb F)\sphat$. 
Under the assumption \ref{assume:b-over-l-is-non-degenerate}, we have
\begin{enumerate}
\item $X_g=[v_g]\in\frak{l}$ for any $g\in G_{\beta}(\Bbb F)^{(c)}$,
\item for any extension $\widetilde\rho\in\frak{l}\sphat$ of $\rho$,
  we have
$$
 c_{\beta,\rho}(g,h)
 =\widetilde\rho\left(2^{-1}\text{\rm Ad}(g)X_h\right)\cdot
  \widetilde\rho\left(2^{-1}X_{gh}\right)\cdot
  \widetilde\rho\left(2^{-1}X_g\right)
$$
  for all $g,h\in G_{\beta}(\Bbb F)^{(c)}$.
\end{enumerate}
\end{prop}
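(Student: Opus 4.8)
The plan is to convert Assumption \ref{assume:b-over-l-is-non-degenerate} into an orthogonal decomposition of the symplectic space $\Bbb V_{\beta}$ and then read off both assertions from the way the section $v\mapsto[v]$ interacts with it. Since $\beta_s,\beta_n\in\frak{l}$ and $\frak{g}_{\beta}(\Bbb F)=Z_{\frak g(\Bbb F)}(\beta_s)\cap Z_{\frak g(\Bbb F)}(\beta_n)\subseteq\frak{l}$, one has $\overline\beta\in\frak{l}$ and $\frak{g}_{\beta}(\Bbb F)\subseteq\frak{l}$. As $\frak{l}^{\perp}$ is an $\text{\rm Ad}(L(\Bbb F))$-submodule we get $[\frak{l},\frak{l}^{\perp}]\subseteq\frak{l}^{\perp}$, whence $B([\frak{l},\frak{l}^{\perp}],\overline\beta)=0$ because $\overline\beta\in\frak{l}$ and $B(\frak{l}^{\perp},\frak{l})=0$. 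Passing to $\Bbb V_{\beta}=\frak{g}(\Bbb F)/\frak{g}_{\beta}(\Bbb F)$, this says that the images $\Bbb V_{\beta}^{\frak l}=\frak{l}/\frak{g}_{\beta}(\Bbb F)$ and $\frak{l}^{\perp}$ give an orthogonal decomposition $\Bbb V_{\beta}=\Bbb V_{\beta}^{\frak l}\oplus\frak{l}^{\perp}$ into symplectic (hence non-degenerate) subspaces, and the chosen section satisfies $[\frak{l}^{\perp}]\subseteq\frak{l}^{\perp}$ and $[\Bbb V_{\beta}^{\frak l}]\subseteq\frak{l}$.

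For the first assertion I would show that $\gamma(v,g)$ vanishes on the $\frak{l}^{\perp}$-part. Any $g\in G_{\beta}(\Bbb F)^{(c)}\subseteq G_{\beta}(\Bbb F)$ fixes $\overline\beta$, hence fixes $\beta_s$, so $\text{\rm Ad}(g)$ preserves $\frak{l}$ and, being a $B$-isometry, also $\frak{l}^{\perp}$; in particular $\sigma_g$ preserves $\frak{l}^{\perp}\subset\Bbb V_{\beta}$. For $v\in\frak{l}^{\perp}$ both $\text{\rm Ad}(g)^{-1}[v]$ and $[v\sigma_g]$ then lie in $\frak{l}^{\perp}$, so $\gamma(v,g)\in\frak{l}^{\perp}\cap\frak{g}_{\beta}(\Bbb F)=0$. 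Thus $\widehat\tau(\langle v,v_g\rangle_{\beta})=\rho(\gamma(v,g))=1$ for every $v\in\frak{l}^{\perp}$; since $\widehat\tau$ is non-trivial and $v\mapsto\langle v,v_g\rangle_{\beta}$ is $\Bbb F$-linear, $v_g$ is orthogonal to $\frak{l}^{\perp}$, i.e. $v_g\in(\frak{l}^{\perp})^{\perp}=\Bbb V_{\beta}^{\frak l}$, which is exactly $X_g=[v_g]\in\frak{l}$.

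For the second assertion the strategy is to rewrite $c_{\beta,\rho}(g,h)=\widehat\tau(2^{-1}\langle v_g,v_{gh}\rangle_{\beta})$ as a single value of $\rho$. Using the multiplication formula \eqref{eq:multiplication-formula-of-v-epsilon} and the alternating property, $\langle v_g,v_{gh}\rangle_{\beta}=\langle v_g,v_h\sigma_g^{-1}\rangle_{\beta}$. Invariance of $\langle\,,\,\rangle_{\beta}$ under $Sp(\Bbb V_{\beta})$ together with the defining relation $\rho(\gamma(v,g))=\widehat\tau(\langle v,v_g\rangle_{\beta})$ and the $\Bbb F$-linearity of $\gamma(\cdot,g)$ in its first slot let me transport the pairing into the argument of $\rho$; concretely, using $v_{g^{-1}}=-v_g\sigma_g$ one obtains $\widehat\tau(2^{-1}\langle v_g,v_{gh}\rangle_{\beta})=\rho(2^{-1}\gamma(v_h,g^{-1}))$. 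It then remains to compute $\gamma(v_h,g^{-1})$: from the definition and $[v_h]=X_h$ one gets $\gamma(v_h,g^{-1})=\text{\rm Ad}(g)X_h-[v_h\sigma_g^{-1}]$, while linearity of the section applied to $v_{gh}=v_h\sigma_g^{-1}+v_g$ gives $[v_h\sigma_g^{-1}]=X_{gh}-X_g$; hence
$$
 \gamma(v_h,g^{-1})=\text{\rm Ad}(g)X_h-X_{gh}+X_g\in\frak{g}_{\beta}(\Bbb F).
$$
Feeding this back and writing $\rho=\widetilde\rho$ on $\frak{g}_{\beta}(\Bbb F)$ yields the product of three $\widetilde\rho$-factors in the statement, with the signs of the three $X$-terms dictated by this computation.

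The delicate point — and where I would spend the care — is precisely this last identity. The three terms $\text{\rm Ad}(g)X_h$, $X_{gh}$, $X_g$ individually lie only in $\frak{l}$ (which is exactly why the extension $\widetilde\rho\in\frak{l}\sphat$ is needed to evaluate them separately), yet their signed combination must collapse into $\frak{g}_{\beta}(\Bbb F)$. This collapse is what makes the right-hand side independent of the chosen extension $\widetilde\rho$ and of the section, and it is forced by the $\sigma_g$-versus-$\sigma_g^{-1}$ bookkeeping in the multiplication formula; keeping those conventions straight, and thereby pinning down the exact signs of the $X$-terms, is the main obstacle. Assertion 1 is what guarantees $X_g,X_h,X_{gh}\in\frak{l}$ in the first place, so that $\widetilde\rho$ is even defined on each of them.
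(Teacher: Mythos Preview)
Your argument is correct, and for part~2 it coincides with the paper's proof essentially line for line: both rewrite $\langle v_g,v_{gh}\rangle_\beta$ via \eqref{eq:multiplication-formula-of-v-epsilon} and $v_{g^{-1}}=-v_g\sigma_g$ as $\langle v_h,v_{g^{-1}}\rangle_\beta$, invoke the defining relation for $v_{g^{-1}}$ to get $\rho(2^{-1}\gamma(v_h,g^{-1}))$, and then compute $\gamma(v_h,g^{-1})=\text{Ad}(g)X_h-(X_{gh}-X_g)$ using linearity of the section. Your remark that the signs are ``dictated by this computation'' is apt: the paper's displayed formula in the statement and its proof in fact differ by a sign on the $X_{gh}$ term, and your derivation agrees with the proof.

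For part~1 you take a genuinely different route. Both arguments begin by showing $\gamma(v,g)=0$ for $v\in\frak{l}^{\perp}$, hence $\langle v,v_g\rangle_\beta=0$ for all such $v$. The paper then unwinds this as $B(X,[X_g,\overline\beta])=0$ for $X\in\frak{l}^\perp$, deduces $[X_g,\overline\beta]\in\frak{l}$, and runs a short Jacobi--identity chase ($[[\beta_s,X_g],\overline\beta]=[\beta_s,[X_g,\overline\beta]]=0$, then $B$-invariance) to force $[\beta_s,X_g]=0$. You instead establish up front that $\Bbb V_\beta=\Bbb V_\beta^{\frak l}\oplus\frak{l}^\perp$ is an \emph{orthogonal} symplectic decomposition (using $[\frak{l},\frak{l}^\perp]\subset\frak{l}^\perp$ and $\overline\beta\in\frak{l}$), so that $v_g\perp\frak{l}^\perp$ immediately yields $v_g\in\Bbb V_\beta^{\frak l}$ and hence $X_g=[v_g]\in\frak{l}$. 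Your approach is cleaner and more structural; the paper's is more hands-on but avoids setting up the decomposition of $\Bbb V_\beta$.
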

\begin{proof}
1) Take any $X\in\frak{l}^{\perp}$ and put 
$v=\dot X\in
 \Bbb V_{\beta}=\frak{g}(\Bbb F)/\frak{g}_{\beta}(\Bbb F)$. Then we
 have
$$
 \gamma(v,g)=\text{\rm Ad}(g)^{-1}[v]-[v\sigma_g]=0
$$
because $[v]=X$ and 
$\text{\rm Ad}(g)^{-1}X\in\frak{l}^{\perp}\subset V$ for any 
$g\in G_{\beta}(\Bbb F)^{(c)}$. Then we have
$$
 \widehat\tau(B([X,X_g],\overline\beta)
 =\widehat\tau(\langle v,v_g\rangle_{\beta})
 =\rho(\gamma(v,g))=1
$$
and hence
$$
 B(X,[X_g,\overline\beta])=B([X,X_g],\overline\beta)=0
$$
for all $X\in\frak{l}^{\perp}$. This means that 
$[X_g,\overline\beta]\in(\frak{l}^{\perp})^{\perp}=\frak{l}$. Since
$[\beta_s,\beta]=0$ and $\frak{l}=Z_{\frak{g}(\Bbb F)}(\beta_s)$, we
have
$$
 [[\beta_s,X_g],\overline\beta]=[\beta_s,[X_g,\overline\beta]]=0
$$
that is 
$[\beta_s,X_g]\in\frak{g}_{\beta}(\Bbb F)\subset\frak{l}$. Finally 
$$
 B(Y,[\beta_s,X_g])=B([Y,\beta_s],X_g)=0
$$
for all $Y\in\frak{l}$, so we have $[\beta_s,X_g]=0$, that is 
$X_g\in\frak{l}$.

2) Take $g,h\in G_{\beta}(\Bbb F)^{(c)}$. The relation 
\ref{eq:multiplication-formula-of-v-epsilon} gives 
$v_g\sigma_g=-v_{g^{-1}}$ and 
$$
 [v_h\sigma_{g^{-1}}]=X_{gh}-X_g\in\frak{l}.
$$
Then we have
\begin{align*}
 c_{\beta,\rho}(g,h)
 &=\widehat\tau\left(2^{-1}\langle v_h,v_{g^{-1}}\rangle_{\beta}\right)
  =\rho\left(2^{-1}\gamma(v_h,g^{-1})\right)\\
 &\widetilde\rho\left(2^{-1}\{\text{\rm Ad}(g)X_h-(X_{gh}-X_g)\}
                       \right).
\end{align*}
\end{proof}

\begin{rem}\label{remark:v-g-is-trivial-if-trace-form-is-non-deg-on-g-beta}
Fix a $\beta\in\frak{g}(O)$. If 
$B:\frak{g}_{\beta}(\Bbb F)\times\frak{g}_{\beta}(\Bbb F)\to\Bbb F$ is
non-degenerate, then we have 
$\frak{g}(\Bbb F)
 =\frak{g}_{\beta}(\Bbb F)^{\perp}\oplus\frak{g}_{\beta}(\Bbb F)$
 where
$$
 \frak{g}_{\beta}(\Bbb F)^{\perp}
 =\{X\in\frak{g}(\Bbb X)\mid B(X,\frak{g}_{\beta}(\Bbb F))=0\}
$$
is a $\text{\rm Ad}(G_{\beta}(\Bbb F))$-invariant 
$\Bbb F$-subspace. So if we choose a $\Bbb F$-linear section 
$v\mapsto[v]$ on 
$\Bbb V_{\beta}=\frak{g}(\Bbb F)/\frak{g}_{\beta}(\Bbb F)$ of the
canonical exact sequence of $\Bbb F$-vector space
$$
 0\to\frak{g}_{\beta}(\Bbb F)\to\frak{g}(\Bbb F)\to\Bbb V\to 0
$$
so that $[v]\in\frak{g}_{\beta}(\Bbb F)^{\perp}$ for all $v\in\Bbb V_{\beta}$,
then we have $v_g=0$ for all $g\in G_{\beta}(\Bbb F)$. 
In this case have $c_{\beta,\rho}(g,h)=1$ for all 
$g,h\in G_{\beta}(\Bbb F)^{(c)}$.
\end{rem}


\subsection[]{}\label{subsec:smoothly-regular-in-reductive-group}
In this subsection, we will consider the relation between the
regularity of $\overline\beta\in\frak{g}(\Bbb F)$ and the triviality
of the Schur multiplier 
$[c_{\beta,\rho}]\in H^2(G_{\beta}(\Bbb F)^{(c)},\Bbb C^{\times})$. 

Put $K=F$ or $K=\Bbb F$. 
Let us suppose that $G{\otimes}_O K$ is
connected reductive algebraic $ K$-group and the characteristic of
$ K$ is not bad with respect to $G{\otimes}_O K$. 
The list of the bad primes is 
\begin{center}
\begin{tabular}{c|c|c|c|c|c|c}
type of $G{\otimes}_O K$   
   &$A_r$      &$B_r, D_r$&$C_r$&$E_6, E_7, F_4$&$E_8$&$G_2$\\
\hline
bad prime&$\emptyset$&$2$       &$2$  &$2,3$        &$2,3,5$&$2,3$
\end{tabular}
\end{center}
(see \cite[p.178, I-4.3]{Borel1968}). 

Take a $\beta\in\frak{g}(O)$ and let $\overline\beta=\beta_s+\beta_n$
be the Jordan decomposition of $\overline\beta\in\frak{g}( K)$
into the semi-simple part $\beta_s\in\frak{g}( K)$ and the
nilpotent part $\beta_n\in\frak{g}( K)$
($\overline\beta\in\frak{g}(K)$ is the image of $\beta\in\frak{g}(O)$
under the canonical mapping $\frak{g}(O)\to\frak{g}(K)$). 
The identity component 
$L=Z_{G{\otimes}_O K}(\beta_s)^o$ of the centralizer of $\beta_s$ in 
$G{\otimes}_O K$ is a reductive group over $ K$ and there
exists a maximal torus $T$ of $G{\otimes}_O K$ such that
$$
 \beta_s\in\text{\rm Lie}(T)( K)
$$
(see \cite[Prop.13.19]{Borel1991} and its proof). 
Then $T\subset L$ and 
$\text{\rm rank}(L)=\text{\rm rank}(G)$. Put 
$\frak{l}=\text{\rm Lie}(L)$, then 
$\frak{l}=Z_{\frak{g}{\otimes}_O K}(\beta_s)$. 
So $\overline\beta\in\frak{g}( K)$ is smoothly regular with
respect to $G{\otimes}_O K$ if and only if 
$\beta_n\in\frak{l}( K)$ is smoothly regular with respect to $L$. 

Now fix a system of
positive roots $\Phi^+$ in the root system $\Phi(T,L)$ of
$L$ with respect to $T$ such that 
$$
 \beta_n=\sum_{\alpha\in\Phi^+}c_{\alpha}\cdot X_{\alpha}
$$
where $X_{\alpha}$ is a root vector of the root $\alpha$. Then 
the result of \cite[p.228,III-3.5]{Borel1968} implies 

\begin{prop}
\label{prop:regularity-in-terms-of-coefficient-of-root-vector}
$\overline\beta\in\frak{g}( K)$ is smoothly regular with respect
to $G$ over $K$ if and only if 
$c_{\alpha}\neq 0$ for all simple $\alpha\in\Phi^+$. 
\end{prop}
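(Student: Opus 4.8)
The plan is to combine the reduction already carried out in this subsection with the classical theory of regular nilpotent elements in good characteristic. Recall from the discussion preceding the statement that $\overline\beta$ is smoothly regular with respect to $G{\otimes}_O K$ if and only if the nilpotent part $\beta_n$ is smoothly regular with respect to $L$, and that $\text{\rm rank}(L)=\text{\rm rank}(G)$. Writing $\frak{l}=\text{\rm Lie}(L)$ and $e=\beta_n=\sum_{\alpha\in\Phi^+}c_\alpha X_\alpha$, smooth regularity of $e$ means $\dim_K\frak{l}_e=\text{\rm rank}(L)$, where $\frak{l}_e=\ker(\text{\rm ad}(e)\colon\frak{l}\to\frak{l})$. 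Since $\dim_K\frak{l}_e\geq\text{\rm rank}(L)$ always holds, the statement reduces to deciding when this lower bound is attained, i.e. when $\text{\rm ad}(e)$ has the maximal possible rank $|\Phi(T,L)|=\dim\frak{l}-\text{\rm rank}(L)$. This is exactly the characterization of regular nilpotent elements, and the hypothesis that the characteristic of $K$ is not bad for $G$ (hence for $L$) guarantees that the theory behaves as in characteristic zero (Jacobson--Morozov, the principal $\frak{sl}_2$, and the regularity of the principal nilpotent), so that \cite[p.228, III-3.5]{Borel1968} applies.

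For the direction asserting that all simple coefficients being nonzero forces regularity, I would argue by degeneration to the leading term. Choose the cocharacter $\lambda\in X_*(T)$ with $\langle\alpha_i,\lambda\rangle=1$ for every simple root $\alpha_i$, so that $\text{\rm Ad}(\lambda(s))X_\alpha=s^{h(\alpha)}X_\alpha$ with $h(\alpha)$ the height of $\alpha$; then $s^{-1}\text{\rm Ad}(\lambda(s))e\to e_1=\sum_{\alpha\ \text{simple}}c_\alpha X_\alpha$ as $s\to0$. Because $\dim\frak{l}_x=\dim\ker(\text{\rm ad}(x))$ is constant under the $\text{\rm Ad}(L)$-action and under nonzero scaling and is upper semicontinuous in $x$, this yields $\dim_K\frak{l}_e\leq\dim_K\frak{l}_{e_1}$. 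The kernel dimension of the $K$-linear map $\text{\rm ad}(e_1)$ is unchanged by base change to an algebraic closure $\overline K$, so regularity of $e_1$ may be tested over $\overline K$; there a torus element (available since the morphism $T\to\mathbb{G}_m^{\,\text{rank}}$ given by the simple roots is surjective on $\overline K$-points) conjugates $e_1$ to the standard principal nilpotent $\sum_{\alpha\ \text{simple}}X_\alpha$, which is regular. Hence $\dim_K\frak{l}_{e_1}=\text{\rm rank}(L)$, and therefore $\dim_K\frak{l}_e=\text{\rm rank}(L)$.

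The converse, that a vanishing simple coefficient prevents regularity, is the harder half, and is where I expect the main obstacle to lie. The naive strategy of showing $X_{\alpha_0}\notin\text{\rm im}(\text{\rm ad}(e))$ when $c_{\alpha_0}=0$ fails, because higher-height components of $e$ (for instance a nonzero $c_{\alpha_0+\alpha_j}$) can recover $X_{\alpha_0}$ in the image through brackets such as $[X_{\alpha_0+\alpha_j},X_{-\alpha_j}]$; thus the degeneration argument of the previous paragraph only produces an inequality in the wrong direction and cannot by itself exhibit the extra centralizer. Here I would instead invoke the classification of \cite[p.228, III-3.5]{Borel1968} directly: in good characteristic the regular nilpotent elements of $\frak{l}$ lying in the nilradical of the fixed Borel are precisely those whose simple coefficients are all nonzero, so $c_{\alpha_0}=0$ forces $\dim_K\frak{l}_e>\text{\rm rank}(L)$. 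Assembling the two directions with the opening reduction then gives the proposition.
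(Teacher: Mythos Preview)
Your proposal is correct and rests on the same source as the paper: the paper offers no proof beyond the single sentence ``Then the result of \cite[p.228, III-3.5]{Borel1968} implies'' preceding the proposition, so both you and the author ultimately defer to that reference. You go further by supplying a self-contained degeneration argument for the ``if'' direction before invoking the citation only for the converse, which is a genuine addition; the one small point to tidy is that the cocharacter $\lambda$ with $\langle\alpha_i,\lambda\rangle=1$ for all simple roots need not lie in $X_*(T)$ unless $L$ is adjoint, but this is harmless since you have already passed to $\overline K$ (or one can use $2\rho^\vee$ and scale by $s^{-2}$ instead).
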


\begin{rem}\label{remark:smoothly-regular-in-gl-n}
Let us consider the case of $GL_n$ which is a connected smooth
reductive $O$-group scheme. For $\beta\in\frak{gl}_n(O)$, the
following statements are equivalent;
\begin{enumerate}
\item $\beta\in\frak{gl}_n(O)$ is smoothly regular with
      respect to $GL_n$ over $\Bbb F$,
\item $\overline\beta\in M_n(\Bbb F)$ is 
      $GL_n(\overline{\Bbb F})$-conjugate to 
$$
 J_{n_1}(\alpha_1)\boxplus\cdots\boxplus J_{n_r}(\alpha_r)=
 \begin{bmatrix}
  J_{n_1}(\alpha_1)&      &                 \\
                   &\ddots&                 \\
                   &      &J_{n_r}(\alpha_r)
 \end{bmatrix},
$$
      where $\alpha_1,\cdots,\alpha_r$ are distinct elements of the
      algebraic closure $\overline{\Bbb F}$ of $\Bbb F$ and
$$
 J_m(\alpha)=\begin{bmatrix}
              \alpha&   1  &      &      \\
                    &\alpha&\ddots&      \\
                    &      &\ddots&   1  \\
                    &      &      &\alpha\\
             \end{bmatrix}
$$
      is a Jordan block of size $m$,
\item the characteristic polynomial 
      $\chi_{\overline\beta}(t)
       =\det(t1_n-\overline\beta)\in\Bbb F[t]$ is the
      minimal polynomial of $\overline\beta\in M_n(\Bbb F)$,
\item $\{X\in M_n(\Bbb F)\mid X\overline\beta=\overline\beta X\}
       =\Bbb F[\overline\beta]$,
\item $\{X\in M_n(O_l)\mid X\beta\equiv\beta X\pmod{\frak{p}^l}\}
       =O_l[\beta_l]$ with
       $\beta_l=\beta\pmod{\frak{p}^l}$ for all $l>0$,
\item $\{X\in M_n(O)\mid X\beta=\beta X\}=O[\beta]$.
\end{enumerate}
In this case $\beta\in\frak{gl}_n(O)$ is smoothly regular with respect
to $GL_n$ over $F$ and the centralizer $GL_{n,\beta}$ is commutative
and smooth over $O$.
\end{rem}

The remark above and Proposition
\ref{prop:schur-multiplier-and-jordan-decomposition} combined with the
results of \cite[Examples 4.6.2--4]{Takase2016}, we have

\begin{prop}\label{prop:c-beta-rho-is-trivial-for-gl-n}
Take a $\beta\in\frak{gl}_n(O)$ such that 
\begin{enumerate}
\item $\beta$ is smoothly regular with respect to $GL_n$ over $\Bbb F$,
\item the multiplicities of the roots
      of the characteristic polynomial $\chi_{\overline\beta}(t)$ of 
      $\overline\beta\in M_n(\Bbb F)$ are at most $4$. 
\end{enumerate}
Then the Schur multiplier 
$[c_{\beta,\rho}]\in H^2(GL_{n,\beta}(\Bbb F)^{(c)},\Bbb C^{\times})$ is
trivial for all $\rho\in\frak{gl}_{n,\beta}(\Bbb F)\sphat$ provided that
the characteristic of $\Bbb F$ is big enough.
\end{prop}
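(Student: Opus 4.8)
The plan is to assemble the three ingredients named just above: Remark~\ref{remark:smoothly-regular-in-gl-n}, which describes smoothly regular $\beta$ for $GL_n$; Proposition~\ref{prop:schur-multiplier-and-jordan-decomposition}, which computes $c_{\beta,\rho}$ through the Jordan decomposition; and the low-rank computations of \cite[Examples 4.6.2--4]{Takase2016}. First I would record the consequences of condition 1). By Remark~\ref{remark:smoothly-regular-in-gl-n} smooth regularity gives $\frak{gl}_{n,\beta}(\Bbb F)=\Bbb F[\overline\beta]$ and makes $GL_{n,\beta}$ commutative and smooth, so that $GL_{n,\beta}(\Bbb F)^{(c)}=GL_{n,\beta}(\Bbb F)$ is commutative by Remark~\ref{remark:remarks-on-main-result} 5); in principle one could then replace triviality of $[c_{\beta,\rho}]$ by symmetry of the cocycle via Remark~\ref{remark:remarks-on-main-result} 6), but I expect the more effective route is to compute $[c_{\beta,\rho}]$ directly from the Jordan decomposition.

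Since the Schur multiplier lives entirely over $\Bbb F$, I would first split the semisimple part. Let $\alpha_1,\dots,\alpha_s\in\overline{\Bbb F}$ be the distinct eigenvalues of $\beta_s$, with multiplicities $m_1,\dots,m_s$. After enlarging $\Bbb F$ to the splitting field of $\chi_{\overline\beta}(t)$ — a finite extension whose degree is prime to the characteristic once the latter is large — Proposition~\ref{prop:scalor-extension-and-schur-multiplier-associated-with-alg-group} lets me assume the $\alpha_i$ lie in $\Bbb F$, since the multiplier over $\Bbb F$ is the restriction of the one over the extension and it suffices to prove triviality after base change. Now $\frak{l}=Z_{\frak{gl}_n(\Bbb F)}(\beta_s)=\bigoplus_{i=1}^s\frak{gl}_{m_i}$, the trace form is non-degenerate on each $\frak{gl}_{m_i}$ and hence on $\frak{l}$, so Assumption~\ref{assume:b-over-l-is-non-degenerate} holds automatically and Proposition~\ref{prop:schur-multiplier-and-jordan-decomposition} applies. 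It yields $X_g=[v_g]\in\frak{l}$ and expresses $c_{\beta,\rho}(g,h)$ purely in terms of an extension $\widetilde\rho\in\frak{l}\sphat$ of $\rho$ and the $X_g$. Because $\beta_s$ is central in $\frak{l}$, the form $\langle X,Y\rangle_\beta=B([X,Y],\overline\beta)$ restricted to $\frak{l}$ sees only $\beta_n=\bigoplus_i\beta_n^{(i)}$, a single regular nilpotent Jordan block in each $\frak{gl}_{m_i}$.

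The heart of the proof, and the step I expect to be the main obstacle, is to factor the multiplier over these blocks. The symplectic space attached to $\frak{l}$, namely $\frak{l}/\frak{l}_{\beta_n}=\bigoplus_i\frak{gl}_{m_i}/\frak{gl}_{m_i,\beta_n^{(i)}}$, is an orthogonal direct sum, and I would choose the $\Bbb F$-linear section $v\mapsto[v]$ block-diagonally so that $\sigma_g$, $\gamma(v,g)$, $v_g$ and $X_g$ all decompose coordinatewise. Since $G_{\beta}(\Bbb F)^{(c)}=\prod_iGL_{m_i,\beta_n^{(i)}}(\Bbb F)^{(c)}$ and $\widetilde\rho=\bigoplus_i\widetilde\rho^{(i)}$, this compatibility gives the factorisation $c_{\beta,\rho}(g,h)=\prod_i c_{\beta_n^{(i)},\rho^{(i)}}(g^{(i)},h^{(i)})$, where each factor is the cocycle built intrinsically from the single block $(\frak{gl}_{m_i},\beta_n^{(i)},\rho^{(i)})$. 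The delicate point is to verify that this block cocycle really coincides with the Schur multiplier attached to $GL_{m_i}$ on its own — which is exactly what legitimises the citation — after which $[c_{\beta,\rho}]$ becomes the product of the pullbacks of the $[c_{\beta_n^{(i)},\rho^{(i)}}]$ along the projections $G_{\beta}(\Bbb F)^{(c)}\to GL_{m_i,\beta_n^{(i)}}(\Bbb F)^{(c)}$.

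Finally I would conclude. Condition 2) says every $m_i$ is at most $4$, so each factor is $GL_{m_i}$ with $m_i\in\{1,2,3,4\}$ and $\beta_n^{(i)}$ regular nilpotent, the case $m_i=1$ being trivial since then the symplectic space vanishes. For $m_i\in\{2,3,4\}$ the triviality of $[c_{\beta_n^{(i)},\rho^{(i)}}]$ for sufficiently large residue characteristic is precisely \cite[Examples 4.6.2--4]{Takase2016}. Choosing the characteristic above the finitely many bounds arising from the splitting degree and from these three examples makes every factor trivial, hence $[c_{\beta,\rho}]$ trivial for all $\rho\in\frak{gl}_{n,\beta}(\Bbb F)\sphat$, as required.
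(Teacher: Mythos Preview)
Your proposal is correct and follows essentially the same route the paper indicates: the paper gives no detailed proof beyond the sentence preceding the proposition, citing Remark~\ref{remark:smoothly-regular-in-gl-n}, Proposition~\ref{prop:schur-multiplier-and-jordan-decomposition}, and \cite[Examples 4.6.2--4]{Takase2016}, and you have filled in exactly these steps (with the natural additional use of Proposition~\ref{prop:scalor-extension-and-schur-multiplier-associated-with-alg-group} to pass to the splitting field so that the block decomposition becomes available).
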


The author does not hesitate to present the following 

\begin{conj}
\label{conjecture:schur-multiplier-is-trivial-for-all-rho-if-regular}
The Schur multiplier 
$[c_{\beta,\rho}]\in H^2(GL_{n,\beta}(\Bbb F)^{(c)},\Bbb C^{\times})$
is trivial for all $\rho:\frak{gl}_{n,\beta}(\Bbb F)\sphat$, 
if $\beta\in\frak{gl}_n(O)$ is smoothly regular with respect to 
 $GL_n$ over $\Bbb F$ and the characteristic of $\Bbb F$ is big enough. 
\end{conj}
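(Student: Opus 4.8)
The plan is to reduce the conjecture to a single reciprocity identity for one regular nilpotent Jordan block and then attack that identity head on. Since $\beta$ is smoothly regular over $\Bbb F$, Remark \ref{remark:smoothly-regular-in-gl-n} tells us that $GL_{n,\beta}$ is commutative, so $GL_{n,\beta}(\Bbb F)^{(c)}=GL_{n,\beta}(\Bbb F)$ is a finite \emph{abelian} group. By item 6 of Remark \ref{remark:remarks-on-main-result} the class $[c_{\beta,\rho}]$ is trivial if and only if the cocycle $c_{\beta,\rho}$ is symmetric. Using $v_{gh}=v_h\sigma_g^{-1}+v_g$, that $\sigma_g\in Sp(\Bbb V_\beta)$, and the relation $v_g\sigma_g=-v_{g^{-1}}$ (both consequences of \eqref{eq:multiplication-formula-of-v-epsilon}), one rewrites $c_{\beta,\rho}(g,h)=\widehat\tau(2^{-1}\langle v_h,v_{g^{-1}}\rangle_\beta)$. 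Hence the entire problem collapses to proving, for all $g,h\in GL_{n,\beta}(\Bbb F)$, the reciprocity identity $\langle v_h,v_{g^{-1}}\rangle_\beta=\langle v_g,v_{h^{-1}}\rangle_\beta$ (as elements of $\Bbb F$); this equality is the target I would aim for.

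Next I would remove the semisimple part. By Proposition \ref{prop:scalor-extension-and-schur-multiplier-associated-with-alg-group} the cocycle over $\Bbb F$ is the restriction of the corresponding cocycle over any unramified extension $\Bbb K/\Bbb F$ of degree prime to the residue characteristic; as the characteristic is assumed large, this lets me pass to a splitting field of the semisimple part $\beta_s$ of $\overline\beta$ and prove triviality there. Over such a field $L=Z_{GL_n}(\beta_s)=\prod_i GL_{n_i}$, and regularity (forcing $\chi_{\overline\beta}$ to equal the minimal polynomial, Remark \ref{remark:smoothly-regular-in-gl-n} and Proposition \ref{prop:regularity-in-terms-of-coefficient-of-root-vector}) makes $\beta_n$ a single regular nilpotent block $J_{n_i}(0)$ in each factor. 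Proposition \ref{prop:schur-multiplier-and-jordan-decomposition} applies since $B$ is non-degenerate on $\frak{l}=\prod_i\frak{gl}_{n_i}$, and its part 1 forces $X_g=[v_g]\in\frak{l}$. Choosing a block-respecting section so that the off-diagonal complement $\frak{l}^{\perp}$ maps to zero under everything entering the formula, $X_g$ decomposes blockwise and the cocycle factors as $c_{\beta,\rho}(g,h)=\prod_i c_{\beta_n^{(i)},\rho_i}(g_i,h_i)$. It therefore suffices to establish the reciprocity identity for $G=GL_m$ and $\beta=J_m(0)$, where $G_\beta(\Bbb F)=(\Bbb F[\beta])^{\times}\cong(\Bbb F[t]/t^m)^{\times}$ and $\frak{g}_\beta(\Bbb F)=\Bbb F[\beta]$.

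For the single-block case I would make the construction of $v_g$ fully explicit. Writing $\frak{gl}_m=W\oplus\frak{g}_\beta$ for a fixed complement with projection $P$ onto $\frak{g}_\beta$ along $W$, one has $\gamma(\dot X,g)=P(\text{\rm Ad}(g)^{-1}X)$ for $X\in W$, so $v_g$ is read off from $\rho\circ P\circ\text{\rm Ad}(g)^{-1}$ through the symplectic form. Representing $\rho$ by $Z\mapsto\widehat\tau(\text{\rm tr}(Z\eta))$ for a suitable $\eta\in\frak{gl}_m$, the reciprocity identity becomes $\text{\rm tr}(gX_hg^{-1}\eta)=\text{\rm tr}(hX_gh^{-1}\eta)$ for all $g,h\in(\Bbb F[\beta])^{\times}$ and all admissible $\eta$. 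The concrete job is then to obtain a closed form, or a manageable recursion in $m$, for $X_g$ as a function of $g=\sum_i a_i\beta^i$, and to verify the resulting trace identity. What makes this genuinely hard is that the trace form is extremely degenerate on $\Bbb F[\beta]$ (only the identity pairs non-trivially), so Remark \ref{remark:v-g-is-trivial-if-trace-form-is-non-deg-on-g-beta} fails and $v_g$ really does depend on both the section and $\rho$.

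The main obstacle is precisely this last step: a treatment uniform in the block size $m$. In \cite{Takase2016} the analogous computation was carried through only for $m\leq4$, which is exactly what underlies Proposition \ref{prop:c-beta-rho-is-trivial-for-gl-n}, and no pattern visibly persisting for all $m$ was isolated. I would pursue two routes. The first is to linearize: restrict $g,h$ to one-parameter families $1+s\beta^i$, where $X_g$ becomes additive and the reciprocity reduces to the symmetry of an explicit pairing assembled from $\text{\rm tr}(\beta^i\,\cdot\,\beta^j\,\cdot\,)$, and then bootstrap to the whole unit group via the cocycle relation $v_{gh}=v_h\sigma_g^{-1}+v_g$ and multiplicativity; the large-characteristic hypothesis is what permits inverting the integers occurring in the truncated exponential and in $X_g$. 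The second, more structural, route is to interpret $g\mapsto v_g$ as a logarithmic-derivative map on the unit group of the Frobenius algebra $\Bbb F[t]/t^m$ and to derive $\langle v_h,v_{g^{-1}}\rangle_\beta=\langle v_g,v_{h^{-1}}\rangle_\beta$ from a reciprocity intrinsic to that algebra together with commutativity. I expect the structural route, if it can be made to work, to deliver the clean uniform argument the conjecture ultimately requires.
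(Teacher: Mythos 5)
You have not proved the statement, and it is important to be clear about why: the statement is presented in the paper as a \emph{conjecture}, and everything in your proposal that is actually established retraces the paper's own chain of heuristic reductions rather than adding the missing step. Your first two stages are correct and coincide with section \ref{sec:properties-of-c-beta-rho}: commutativity of $GL_{n,\beta}$ for smoothly regular $\beta$ is Remark \ref{remark:smoothly-regular-in-gl-n}; the criterion that triviality of $[c_{\beta,\rho}]$ over an abelian group is equivalent to symmetry of the cocycle is item 6 of Remark \ref{remark:remarks-on-main-result}; the rewriting $c_{\beta,\rho}(g,h)=\widehat\tau\left(2^{-1}\langle v_h,v_{g^{-1}}\rangle_{\beta}\right)$ via $v_g\sigma_g=-v_{g^{-1}}$ appears verbatim in the proof of Proposition \ref{prop:schur-multiplier-and-jordan-decomposition}; scalar extension to a splitting field of $\beta_s$ is Proposition \ref{prop:scalor-extension-and-schur-multiplier-associated-with-alg-group} (and your use of it is legitimate for large characteristic, since the needed degree has all prime factors at most $n$); and the blockwise reduction to a single regular nilpotent block is exactly the Jordan-decomposition mechanism of Proposition \ref{prop:schur-multiplier-and-jordan-decomposition} together with Remark \ref{remark:smoothly-regular-in-gl-n}. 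All of this is precisely the evidence the paper itself assembles to call the conjecture ``highly probable''; none of it is the conjecture.

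The genuine gap is the step you yourself flag as the obstacle: the reciprocity identity $\langle v_h,v_{g^{-1}}\rangle_{\beta}=\langle v_g,v_{h^{-1}}\rangle_{\beta}$ for a single block $J_m(0)$, uniformly in $m$. This is exactly where the known results stop: \cite{Takase2016} verifies the triviality only for block sizes at most $4$ by direct computation, which is what yields Proposition \ref{prop:c-beta-rho-is-trivial-for-gl-n} and is why the paper must leave the general statement as Conjecture \ref{conjecture:schur-multiplier-is-trivial-for-all-rho-if-regular}. Your two proposed routes are programs, not arguments. The linearization route does not close as stated: by \eqref{eq:multiplication-formula-of-v-epsilon} the map $g\mapsto v_g$ is a $\sigma$-twisted $1$-cocycle, $v_{gh}=v_h\sigma_g^{-1}+v_g$, so symmetry of the pairing on the one-parameter families $1+s\beta^i$ does not propagate to products without controlling the twists $\sigma_g$ --- and controlling those twists on $(\Bbb F[t]/t^m)^{\times}$ for general $m$ is the whole difficulty, made worse by the fact (which you correctly note) that the trace form is maximally degenerate on $\Bbb F[\beta]$, so $v_g$ genuinely depends on $\rho$ and the section. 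The ``logarithmic derivative on the Frobenius algebra'' route is an analogy without a construction. A smaller caveat: symmetry of $c_{\beta,\rho}$ only requires equality of the $\widehat\tau$-values, so your target identity in $\Bbb F$ is sufficient but strictly stronger than necessary; if the exact identity fails for some section, triviality could still hold. In sum, your reduction is sound and matches the paper, but the essential content of the conjecture --- the single-block case for arbitrary $m$ --- remains unproven in your proposal, exactly as it does in the paper.
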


In general if $\beta\in\frak{g}(O)\subset\frak{gl}_n(O)$ is smoothly
regular with respect to $GL_n$ over $\Bbb F$, then $\beta$ is also
smoothly regular with respect to $GL_n$ over $F$, hence 
$GL_{n,\beta}$ is smooth commutative $O$-group scheme. So 
$G_{\beta}\subset GL_{n,\beta}$ is commutative, and so we have
$$
 G_{\beta}(\Bbb F)^{(c)}=G_{\beta}(\Bbb F)
 \subset GL_{n,\beta}(\Bbb F)=GL_{n,\beta}(\Bbb F)^{(c)}.
$$
Then Proposition
\ref{prop:regular-schur-multiplier-is-restriction-from-upper-group}
says that the Schur multiplier 
$[c_{\beta,\rho}]\in H^2(G_{\beta}(\Bbb F)^{(c)},\Bbb C^{\times})$
with $\rho\in\frak{g}_{\beta}(\Bbb F)\sphat$ is the image under the
restriction mapping
$$
 \text{\rm Res}:H^2(GL_{n,\beta}(\Bbb F)^{(c)},\Bbb C^{\times})
                \to
                H^2(G_{\beta}(\Bbb F)^{(c)},\Bbb C^{\times})
$$
of the Schur multiplier 
$[c_{\beta,\widetilde\rho}]\in 
 H^2(GL_{n,\beta}(\Bbb F)^{(c)},\Bbb C^{\times})$. 

If $G\subset GL_n$ is a group symplectic similitude, 
a general orthogonal group with
respect to a quadratic form of odd variables or an unitary group 
(see section \ref{sec:example}), then 
Proposition
\ref{prop:regularity-in-terms-of-coefficient-of-root-vector} implies
the following; 
$\beta\in\frak{g}(O)\in\frak{gl}_n(O)$ is smoothly regular with respect to 
$G$ over $\Bbb F$ if and only if $\beta$ is smoothly regular with 
respect to $GL_n$ over $\Bbb F$. So Conjecture 
\ref{conjecture:schur-multiplier-is-trivial-for-all-rho-if-regular}
implies the following conjectural proposition

\begin{prop}
\label{prop:conjectural-proposition-on-triviality-of-schur-multiplier}
If $G\subset GL_n$ is a group of symplectic similitudes, 
a general orthogonal group with
respect to a quadratic form of odd variables or an unitary group, then 
the Schur multiplier 
$[c_{\beta,\rho}]\in H^2(G(\Bbb F)_{\beta}^{(c)},\Bbb C^{\times})$ is
trivial for all $\rho\in\frak{g}_{\beta}(\Bbb F)\sphat$ 
if $\beta\in\frak{g}(O)$ is regular with respect to $G$ over $\Bbb F$ 
and the characteristic of $\Bbb F$ is big enough. 
\end{prop}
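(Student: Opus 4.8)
The plan is to reduce the statement entirely to the $GL_n$-case, i.e. to Conjecture \ref{conjecture:schur-multiplier-is-trivial-for-all-rho-if-regular}, by exploiting the embedding $G\subset GL_n$ together with the restriction behaviour of the Schur multiplier recorded in Proposition \ref{prop:regular-schur-multiplier-is-restriction-from-upper-group}. Concretely, the whole argument is a chain of three implications, each already prepared in the preceding subsections: (a) smooth regularity with respect to $G$ over $\Bbb F$ forces smooth regularity with respect to $GL_n$ over $\Bbb F$; (b) in that case Conjecture \ref{conjecture:schur-multiplier-is-trivial-for-all-rho-if-regular} makes $[c_{\beta,\widetilde\rho}]$ trivial on $GL_{n,\beta}(\Bbb F)^{(c)}$; (c) $[c_{\beta,\rho}]$ on $G_{\beta}(\Bbb F)^{(c)}$ is the restriction of $[c_{\beta,\widetilde\rho}]$, and restriction sends the trivial class to the trivial class.

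First I would carry out the regularity translation. Since $\beta$ is smoothly regular with respect to $G$ over $\Bbb F$, I would invoke Proposition \ref{prop:regularity-in-terms-of-coefficient-of-root-vector}, which encodes regularity as the non-vanishing of the coefficients $c_{\alpha}$ of the nilpotent part $\beta_n$ along the simple root vectors. The point at which the three families $GSp$, $GO$ in an odd number of variables and $U$ in an odd number of variables enter is that, for each of them, the simple-root coefficient criterion for $G{\otimes}_O\Bbb F$ agrees with the one for $GL_n$; hence $\beta$ is smoothly regular with respect to $GL_n$ over $\Bbb F$ as well. By Remark \ref{remark:smoothly-regular-in-gl-n} this in turn makes $GL_{n,\beta}$ smooth and commutative over $O$, so $G_{\beta}\subset GL_{n,\beta}$ is commutative and
$$
 G_{\beta}(\Bbb F)^{(c)}=G_{\beta}(\Bbb F)
 \subset GL_{n,\beta}(\Bbb F)=GL_{n,\beta}(\Bbb F)^{(c)}.
$$

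Next I would conclude. Applying Conjecture \ref{conjecture:schur-multiplier-is-trivial-for-all-rho-if-regular}, the class $[c_{\beta,\widetilde\rho}]\in H^2(GL_{n,\beta}(\Bbb F)^{(c)},\Bbb C^{\times})$ vanishes for every extension $\widetilde\rho\in\frak{gl}_{n,\beta}(\Bbb F)\sphat$ of the given $\rho$, since $\beta$ is regular with respect to $GL_n$ over $\Bbb F$ and the residue characteristic is large. Taking $H=GL_n$ in Proposition \ref{prop:regular-schur-multiplier-is-restriction-from-upper-group} — whose hypotheses hold because the trace form on $\frak{gl}_n(\Bbb F)$ is non-degenerate and the inclusion $G_{\beta}(\Bbb F)^{(c)}\subset GL_{n,\beta}(\Bbb F)^{(c)}$ is the display above — the class $[c_{\beta,\rho}]$ is the image of $[c_{\beta,\widetilde\rho}]$ under the restriction map, with $\widetilde\rho$ the specific extension of $\rho$ vanishing on $(\frak{g}(\Bbb F)^{\perp})_{\beta}$. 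As the restriction of the trivial class is trivial, $[c_{\beta,\rho}]$ is trivial, which is the assertion.

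The genuine content is thus concentrated in the regularity step, and I expect it to be the main (though essentially combinatorial) obstacle: one must verify, family by family, that the simple-root coefficient criterion of Proposition \ref{prop:regularity-in-terms-of-coefficient-of-root-vector} for $G$ coincides with that for the ambient $GL_n$. For $GSp_{2n}$ and for $GO$ and $U$ in an odd number of variables this is where the odd-variable hypothesis is used, guaranteeing that the defining involution preserves a compatible positive system and identifies the simple roots of $G$ with the relevant folding of those of $GL_n$, so that no short/long-root discrepancy can make a $\beta$ regular for $G$ but singular for $GL_n$. Everything else — non-degeneracy of the trace form, commutativity of $G_{\beta}$, and the passage through restriction — is immediate from the cited results, while the deep input is wholly absorbed into the assumed Conjecture \ref{conjecture:schur-multiplier-is-trivial-for-all-rho-if-regular}, which is why the statement is only a conditional (conjectural) proposition.
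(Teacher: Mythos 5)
Your proposal is correct and follows essentially the same route as the paper: transfer of smooth regularity from $G$ to $GL_n$ via Proposition \ref{prop:regularity-in-terms-of-coefficient-of-root-vector}, commutativity of $GL_{n,\beta}$ from Remark \ref{remark:smoothly-regular-in-gl-n}, realization of $[c_{\beta,\rho}]$ as the restriction of $[c_{\beta,\widetilde\rho}]$ via Proposition \ref{prop:regular-schur-multiplier-is-restriction-from-upper-group} with $H=GL_n$, and triviality of the latter from Conjecture \ref{conjecture:schur-multiplier-is-trivial-for-all-rho-if-regular}. The paper likewise leaves the family-by-family root-system comparison as an appeal to Proposition \ref{prop:regularity-in-terms-of-coefficient-of-root-vector}, so your treatment of that step is at the same level of detail as the original.
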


These arguments provide good reasons for the
conjectural statement of subsection 
\ref{subsec:conjectures-on-triviality-of-c-beta-rho}. 

\section{Examples}\label{sec:example}

\subsection[]{}\label{subsec:general-linear-group}
$G=GL_n$ is a connected smooth reductive $O$-group scheme which
satisfies the conditions I), II) and III) of the subsection 
\ref{subsec:fundamental-setting}. 

\begin{prop}\label{prop:congruent-beta-for-gl-n}
If 
$\beta\equiv\lambda\cdot 1_n+\beta_0\npmod{\frak{p}^{l^{\prime}}}$ 
for $\beta, \beta_0\in\frak{gl}_n(O)$ with $\lambda\in O$, then there
  exists a group homomorphism 
$\mu:O_r^{\times}\to\Bbb C^{\times}$ such that 
$$
 \pi\mapsto(\mu\circ\det)\otimes\pi
$$
gives a bijection of $\text{\rm Irr}(G(O_r)\mid\psi_{\beta_0})$ onto 
$\text{\rm Irr}(G(O_r)\mid\psi_{\beta})$.
\end{prop}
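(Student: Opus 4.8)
The plan is to produce $\mu$ so that $(\mu\circ\det)|_{K_l(O_r)}=\psi_{\beta}\cdot\psi_{\beta_0}^{-1}$; once this identity holds, the twist $\pi\mapsto(\mu\circ\det)\otimes\pi$ will carry $\text{\rm Irr}(G(O_r)\mid\psi_{\beta_0})$ bijectively onto $\text{\rm Irr}(G(O_r)\mid\psi_{\beta})$, the inverse being the twist by $\mu^{-1}\circ\det$. First I would record how the congruence $\beta\equiv\lambda\cdot1_n+\beta_0\npmod{\frak{p}^{l^{\prime}}}$ propagates to the characters. Since $\psi_{\beta}$ depends on $\beta$ only modulo $\frak{p}^{l^{\prime}}$, and $B(X,1_n)=\text{\rm tr}(X)$, for $h=(1+\varpi^lX)\nnpmod{\frak{p}^r}\in K_l(O_r)$ with $X\in\frak{gl}_n(O)$ we get
$$
 \psi_{\beta}(h)=\tau\left(\varpi^{-l^{\prime}}B(X,\beta)\right)
 =\tau\left(\varpi^{-l^{\prime}}\lambda\cdot\text{\rm tr}(X)\right)\cdot\psi_{\beta_0}(h),
$$
using $B(X,\lambda\cdot1_n+\beta_0)=\lambda\,\text{\rm tr}(X)+B(X,\beta_0)$.

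Next I would compute the restriction of $\det$ to $K_l(O_r)$. Because $r=l+l^{\prime}$ with $l^{\prime}\leq l$ we have $2l\geq r$, so every term of degree $\geq 2$ in the expansion of $\det(1+\varpi^lX)$ is divisible by $\varpi^{2l}\equiv 0\npmod{\frak{p}^r}$; hence $\det(1+\varpi^lX)\equiv 1+\varpi^l\,\text{\rm tr}(X)\npmod{\frak{p}^r}$. Thus $\det$ maps $K_l(O_r)$ into the subgroup $1+\varpi^lO_r$ of $O_r^{\times}$ by $h\mapsto 1+\varpi^l\,\text{\rm tr}(X)$.

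The construction of $\mu$ then proceeds in two steps. First I define a character $\mu_0$ of $1+\varpi^lO_r$ by $1+\varpi^lt\mapsto\tau(\varpi^{-l^{\prime}}\lambda t)$ for $t\in O$; this is well defined since both sides depend only on $t\npmod{\frak{p}^{l^{\prime}}}$, and it is a homomorphism because $(1+\varpi^ls)(1+\varpi^lt)\equiv 1+\varpi^l(s+t)\npmod{\frak{p}^r}$ (again using $2l\geq r$) while $\tau$ is additive. Second, since $\Bbb C^{\times}$ is divisible, hence injective as an abelian group, $\mu_0$ extends to a character $\mu:O_r^{\times}\to\Bbb C^{\times}$. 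By construction $(\mu\circ\det)(h)=\mu_0(1+\varpi^l\,\text{\rm tr}(X))=\tau(\varpi^{-l^{\prime}}\lambda\,\text{\rm tr}(X))$ for $h\in K_l(O_r)$, which is exactly $\psi_{\beta}\cdot\psi_{\beta_0}^{-1}$.

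Finally I would verify the bijection. For $\pi\in\text{\rm Irr}(G(O_r)\mid\psi_{\beta_0})$ the twist $(\mu\circ\det)\otimes\pi$ is again irreducible, and for $h\in K_l(O_r)$ a vector $v$ satisfies $((\mu\circ\det)\otimes\pi)(h)v=\psi_{\beta}(h)v$ if and only if $\pi(h)v=\psi_{\beta_0}(h)v$; hence $\text{\rm Hom}_{K_l(O_r)}(\psi_{\beta},(\mu\circ\det)\otimes\pi)=\text{\rm Hom}_{K_l(O_r)}(\psi_{\beta_0},\pi)\neq 0$, so $(\mu\circ\det)\otimes\pi\in\text{\rm Irr}(G(O_r)\mid\psi_{\beta})$. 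The same computation with $\mu^{-1}$ supplies the inverse map, giving bijectivity. The only non-formal point is the existence of $\mu$, which reduces to extending a character from $1+\varpi^lO_r$ to $O_r^{\times}$; this is where I would invoke divisibility of $\Bbb C^{\times}$, and it is the step I expect to be the mildest obstacle, the remainder being direct computation.
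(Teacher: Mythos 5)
Your proposal is correct and follows essentially the same route as the paper: the paper likewise takes $\mu$ to be any character of $O_r^{\times}$ with $\mu\bigl((1+\varpi^lx)_r\bigr)=\tau(\varpi^{-l^{\prime}}\lambda x)$ and deduces $\psi_{\beta}=(\mu\circ\det)\cdot\psi_{\beta_0}$ on $K_l(O_r)$ from $\det(1_n+\varpi^lX)\equiv 1+\varpi^l\,\text{\rm tr}(X)\npmod{\frak{p}^r}$. You merely make explicit two points the paper leaves tacit — the extension of $\mu_0$ from $1+\varpi^lO_r$ to $O_r^{\times}$ via divisibility of $\Bbb C^{\times}$, and the check that the twist is a bijection — both of which are handled correctly.
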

\begin{proof}
Let $\mu:O_r^{\times}\to\Bbb C^{\times}$ be a group homomorphism such
that $\mu((1+\varpi^l)_rx)=\tau(\varpi^{-l^{\prime}}\lambda x)$ for all 
$x\in O$. Then we have 
$$
 \psi_{\beta}(h)=\mu\circ\det(h)\cdot\psi_{\beta_0}(h)
$$
for all 
$h=1_n+\varpi^lX\npmod{\frak{p}^r}\in K_l(O_r)$ ($X\in
M_n(O)$), because
$$
 \det(1_n+\varpi^lX)\equiv 1+\varpi^l\text{\rm tr}(X)
 \npmod{\frak{p}^r}.
$$
\end{proof}

A similar argument shows 

\begin{prop}\label{prop:central-beta-for-gl-n}
If $\beta\in\frak{gl}_n(O)$ is central modulo $\frak{p}$, then there
exists a group homomorphism $\mu:O_r^{\times}\to\Bbb C^{\times}$ such
that, for any 
$\pi\in\text{\rm Irr}(G(O_r)\mid\psi_{\beta})$, there exists a 
$\sigma\in\text{\rm Irr}(G(O_{r-1}))$ such that 
$\pi=(\mu\circ\det)\otimes(\sigma\circ\text{\rm proj})$ where 
$\text{\rm proj}:G(O_r)\to G(O_{r-1})$ is the canonical surjection.
\end{prop}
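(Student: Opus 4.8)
The plan is to reduce at once to the case of a \emph{topologically trivial} infinitesimal character, i.e. $\overline{\beta}=0$, by the same twisting device that proves Proposition \ref{prop:congruent-beta-for-gl-n}, and then to establish the one genuinely new point: that every member of $\text{\rm Irr}(G(O_r)\mid\psi_{\beta_0})$ for such a $\beta_0$ already factors through $\text{\rm proj}:G(O_r)\to G(O_{r-1})$. Concretely, I would first pick $\lambda\in O$ lifting the scalar $\overline\lambda\in\Bbb F$ with $\overline\beta=\overline\lambda\cdot 1_n$ (possible since $\beta$ is central modulo $\frak p$), and set $\beta_0=\beta-\lambda\cdot 1_n$, so that $\beta=\lambda\cdot 1_n+\beta_0$ holds \emph{exactly} and $\beta_0\equiv 0\npmod{\frak p}$. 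Applying Proposition \ref{prop:congruent-beta-for-gl-n} to this decomposition furnishes a character $\mu:O_r^{\times}\to\Bbb C^{\times}$ and a bijection $\pi'\mapsto(\mu\circ\det)\otimes\pi'$ of $\text{\rm Irr}(G(O_r)\mid\psi_{\beta_0})$ onto $\text{\rm Irr}(G(O_r)\mid\psi_{\beta})$. Hence any $\pi\in\text{\rm Irr}(G(O_r)\mid\psi_{\beta})$ is $(\mu\circ\det)\otimes\pi'$ for a unique $\pi'\in\text{\rm Irr}(G(O_r)\mid\psi_{\beta_0})$, and it remains to factor $\pi'$.

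The core computation is that $\psi_{\beta_0}$, and in fact each of its $G(O_r)$-conjugates, restricts trivially to $K_{r-1}(O_r)$. Since $r\geq 2$ forces $l'\geq 1$, we have $r-1=l+l'-1\geq l$, so $K_{r-1}(O_r)\subset K_l(O_r)$. Writing an element of $K_{r-1}(O_r)$ as $1+\varpi^{r-1}X=1+\varpi^{l}(\varpi^{l'-1}X)\npmod{\frak p^r}$ (using $r-1-l=l'-1$) and writing $\beta_0=\varpi\gamma$ with $\gamma\in\frak{gl}_n(O)$, the defining formula of $\psi_{\beta_0}$ gives
$$
 \psi_{\beta_0}\bigl(1+\varpi^{r-1}X\npmod{\frak p^r}\bigr)
 =\tau\bigl(\varpi^{-1}B(X,\beta_0)\bigr)
 =\tau\bigl(B(X,\gamma)\bigr)=1 ,
$$
because $B(X,\gamma)\in O$ and $\tau$ is trivial on $O$. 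The conjugation rule $\psi_{\beta_0}(g_r^{-1}hg_r)=\psi_{\text{\rm Ad}(g)\beta_0}(h)$ shows that every $K_l(O_r)$-conjugate is again of the form $\psi_{\text{\rm Ad}(g)\beta_0}$, and $\text{\rm Ad}(g)\beta_0\equiv 0\npmod{\frak p}$, so the identical computation makes it trivial on $K_{r-1}(O_r)$ as well.

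To finish I would invoke Clifford's theorem for the normal abelian subgroup $K_l(O_r)$ of $G(O_r)$: the restriction $\pi'|_{K_l(O_r)}$ is a direct sum of $G(O_r)$-conjugates of $\psi_{\beta_0}$, each trivial on $K_{r-1}(O_r)$ by the previous step. Therefore $K_{r-1}(O_r)$ acts as the identity throughout the space of $\pi'$. As $K_{r-1}(O_r)$ is precisely the kernel of the surjection $\text{\rm proj}:G(O_r)\to G(O_{r-1})$, the representation $\pi'$ descends to an irreducible $\sigma\in\text{\rm Irr}(G(O_{r-1}))$ with $\pi'=\sigma\circ\text{\rm proj}$. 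Combining with the first step yields $\pi=(\mu\circ\det)\otimes(\sigma\circ\text{\rm proj})$, which is the assertion.

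The argument is short, and the only ingredient beyond Proposition \ref{prop:congruent-beta-for-gl-n} is this descent. The point that requires the (modest) care is exactly that \emph{all} conjugate characters — not merely $\psi_{\beta_0}$ itself — vanish on $K_{r-1}(O_r)$; this is what forces $K_{r-1}(O_r)$ to act trivially in the whole of $\pi'$, and it is secured by the $\text{\rm Ad}(G(O_r))$-invariance of the condition $\beta_0\equiv 0\npmod{\frak p}$. I would flag that the choice $\beta_0=\beta-\lambda\cdot 1_n$ is made modulo $\frak p$ only, so that $\overline{\beta_0}=0$ is essential here, in contrast with the level-$\frak p^{l'}$ hypothesis of Proposition \ref{prop:congruent-beta-for-gl-n}.
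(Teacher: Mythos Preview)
Your proof is correct and follows essentially the same approach as the paper: choose $\mu$ so that twisting by $(\mu\circ\det)^{-1}$ replaces $\psi_{\beta}$ by $\psi_{\beta_0}$ with $\beta_0\equiv 0\npmod{\frak p}$, then observe that the twisted representation is trivial on $K_{r-1}(O_r)$ and hence descends to $G(O_{r-1})$. The paper's proof is terser and leaves the Clifford-theoretic descent implicit, while you spell out the key computation $\psi_{\beta_0}|_{K_{r-1}(O_r)}=1$ and its stability under $G(O_r)$-conjugation; one small wording slip is that ``$K_l(O_r)$-conjugate'' should read ``$G(O_r)$-conjugate of the character on $K_l(O_r)$''.
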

\begin{proof}
Put $\beta\equiv\lambda\cdot 1_n\npmod{\frak p}$ with 
$\lambda\in O$. 
Let $\mu:O_r^{\times}\to\Bbb C^{\times}$ be a group homomorphism such
that $\mu((1+\varpi^l)_rx)=\tau(\varpi^{-l^{\prime}}\lambda x)$ for all 
$x\in O$. 
Then, for any $\pi\in\text{\rm Irr}(G(O_r)\mid\psi_{\beta})$, the
representation $(\mu\circ\det)^{-1}\otimes\pi$ of $G(O_r)$ is trivial
on $K_{r-1}(O_r)$, in other word, it factors through $G(O_{r-1})$.
\end{proof}

If $\beta\in\frak{gl}_n(O)$ is
smoothly regular with respect to $GL_n$ over $\Bbb F$, then 
$G_{\beta}$ is a smooth commutative $O$-group scheme (see Remark 
\ref{remark:smoothly-regular-in-gl-n}). If further 
$\overline\beta\in\frak{gl}_n(\Bbb F)$ is semisimple, 
then $c_{\beta,\rho}(g,h)=1$ for all $g,h\in G_{\beta}(\Bbb F)$ and 
$\rho\in\frak{gl}_{n,\beta}(\Bbb F)\sphat$ 
(see Remark
\ref{remark:v-g-is-trivial-if-trace-form-is-non-deg-on-g-beta}). 
In this case Theorem \ref{th:main-result} gives

\begin{prop}\label{prop:regular-semisimple-case-for-gl-n}
There exists a bijection 
$\theta\mapsto
 \text{\rm Ind}_{G(O_r,\beta)}^{G(O_r)}\sigma_{\beta,\theta}$ of the
 set 
$$
 \left\{\theta\in G_{\beta}(O_r)\sphat\;\;\;
         \text{\rm s.t. $\theta=\psi_{\beta}$ on 
               $G_{\beta}(O_r)\cap K_l(O_r)$}\right\}
$$
onto $\text{\rm Irr}(G(O_r)\mid\psi_{\beta})$.
\end{prop}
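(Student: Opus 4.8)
The plan is to exhibit this proposition as the composition of Theorem \ref{th:main-result} with the Clifford-theoretic bijection recorded in subsection \ref{subsec:clifford-theory-in-general}. Concretely, I would first check that the given $\beta$ satisfies the two hypotheses of Theorem \ref{th:main-result}, then invoke that theorem to parametrize $\text{\rm Irr}(G(O_r,\beta)\mid\psi_{\beta})$, and finally transport the parametrization up to $\text{\rm Irr}(G(O_r)\mid\psi_{\beta})$ by induction.

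For the first hypothesis, since $\beta$ is smoothly regular with respect to $GL_n$ over $\Bbb F$, Remark \ref{remark:smoothly-regular-in-gl-n} shows that $\beta$ is smoothly regular over $F$ as well and that $G_{\beta}=GL_{n,\beta}$ is a commutative smooth $O$-group scheme; this is exactly condition 1) of Theorem \ref{th:main-result}. For the second hypothesis I would use that $\overline\beta$ is semisimple. Because $\overline\beta$ is regular semisimple, Remark \ref{remark:smoothly-regular-in-gl-n} identifies $\frak{g}_{\beta}(\Bbb F)=\Bbb F[\overline\beta]$ as an \'etale commutative $\Bbb F$-algebra of rank $n$, realized in $M_n$ via its regular representation, so that $B$ restricts to the (nondegenerate) trace form of the \'etale algebra. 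Remark \ref{remark:v-g-is-trivial-if-trace-form-is-non-deg-on-g-beta} then yields $v_g=0$ for all $g\in G_{\beta}(\Bbb F)$, whence $c_{\beta,\rho}\equiv 1$ and in particular the Schur multiplier $[c_{\beta,\rho}]$ is trivial for every $\rho\in\frak{gl}_{n,\beta}(\Bbb F)\sphat$. This is condition 2) of Theorem \ref{th:main-result}.

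With both conditions in hand, Theorem \ref{th:main-result} supplies the bijection $\theta\mapsto\sigma_{\beta,\theta}$ of the displayed set of characters of $G_{\beta}(O_r)$ onto $\text{\rm Irr}(G(O_r,\beta)\mid\psi_{\beta})$. Composing it with the bijection $\sigma\mapsto\text{\rm Ind}_{G(O_r,\beta)}^{G(O_r)}\sigma$ of $\text{\rm Irr}(G(O_r,\beta)\mid\psi_{\beta})$ onto $\text{\rm Irr}(G(O_r)\mid\psi_{\beta})$ furnished by item 2) of subsection \ref{subsec:clifford-theory-in-general} gives precisely $\theta\mapsto\text{\rm Ind}_{G(O_r,\beta)}^{G(O_r)}\sigma_{\beta,\theta}$, the asserted bijection. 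The only genuinely substantive input is the nondegeneracy of $B$ on $\frak{g}_{\beta}(\Bbb F)=\Bbb F[\overline\beta]$ that triggers Remark \ref{remark:v-g-is-trivial-if-trace-form-is-non-deg-on-g-beta}; once that is granted the argument is a formal chaining of bijections already established, so I do not anticipate any serious obstacle here.
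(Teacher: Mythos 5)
Your proposal is correct and follows essentially the same route as the paper: the paper states this proposition as an immediate consequence of Theorem \ref{th:main-result}, after verifying condition 1) via Remark \ref{remark:smoothly-regular-in-gl-n} and condition 2) via Remark \ref{remark:v-g-is-trivial-if-trace-form-is-non-deg-on-g-beta} (nondegeneracy of $B$ on $\frak{g}_{\beta}(\Bbb F)=\Bbb F[\overline\beta]$ forcing $v_g=0$ and hence $c_{\beta,\rho}\equiv 1$), and then composing with the Clifford-theoretic induction bijection of subsection \ref{subsec:clifford-theory-in-general}. Your added justification that $\Bbb F[\overline\beta]$ is an \'etale algebra whose trace form is nondegenerate is a correct filling-in of the detail the paper leaves implicit.
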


There are $\frac 1n\sum_{d|n}\mu\left(\frak nd\right)\cdot q^d$ 
irreducible polynomials in $\Bbb F[t]$ of degree $n$ ($\mu$ is the
M\"obius function). Take a polynomial 
$$
 p(t)=t^n+a_1t^{n-1}+\cdots+a_{n-1}t+a_n\in O[t]
$$ 
such that $p(t)\npmod{\frak p}\in\Bbb F[t]$ is
irreducible. Then 
$$
 [p(t)]=\left\{\beta\in\frak{gl}_n(O)\mid 
           \chi_{\beta}(t)\equiv p(t)\npmod{\frak p}\right\}
$$
is stable under the adjoint action of $GL_n(O)$. 
Take a $\beta\in [p(t)]$. Then  
$K=F[\beta]$ is an unramified field extension of $F$ of degree $n$,
which is the splitting field of $p(t)$ over $F$, and
$O_K=O_F[\beta]$ is the integer ring of $K$ with the maximal ideal 
$\frak{p}_K=\varpi O_K$. If we identify 
$G_{\beta}(O_r)$ with $\left(O_K/\frak{p}_K^r\right)^{\times}$, then we have
$$
 G_{\beta}(O_r)\cap K_{l^{\prime}}(O_r)
 =(1+\frak{p}_K^{l^{\prime}})/(1+\frak{p}_K^r)
 \hookrightarrow
 \left(O_K/\frak{p}_K^r\right)^{\times}
$$
and 
$\psi_{\beta}\left((1+\varpi^lx)_r\right)
 =T_{K/F}(\varpi^{-l^{\prime}}x\beta)$ ($x\in O_K$). 
So Proposition \ref{prop:regular-semisimple-case-for-gl-n} gives

\begin{prop}\label{prop:shintani-gerardin-parametrization}
There exists a bijection 
$\theta\mapsto
 \text{\rm Ind}_{G(O_r,\beta)}^{G(O_r)}\sigma_{\beta,\theta}$ of the
 set 
$$
 \left\{\begin{array}{l}
        \theta:O_K^{\times}\to\left(O_K/\frak{p}_K^r\right)^{\times}
               \to\Bbb C^{\times}:\text{\rm group homomorphism}\\
        \text{\rm such that 
  $\theta(1+\varpi^lx)=T_{K/F}(\varpi^{-l^{\prime}}\beta x)
   \;\;
   \forall x\in O_K$}
         \end{array}\right\}
$$
onto $\text{\rm Irr}(G(O_r)\mid\psi_{\beta})$.
\end{prop}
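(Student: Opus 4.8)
The plan is to deduce this from Proposition~\ref{prop:regular-semisimple-case-for-gl-n} by making its abstract parameter set explicit in field-theoretic terms. First I would check that $\overline\beta$ is regular semisimple. Since $p(t)\npmod{\frak p}$ is irreducible of degree $n$ it is the characteristic polynomial of $\overline\beta$, and being irreducible it coincides with the minimal polynomial; by the equivalences in Remark~\ref{remark:smoothly-regular-in-gl-n} (item 3 $\Leftrightarrow$ item 1) this says $\overline\beta$ is smoothly regular with respect to $GL_n$ over $\Bbb F$, so that $G_{\beta}=GL_{n,\beta}$ is a smooth commutative $O$-group scheme. Moreover $\Bbb F$ is perfect, hence the irreducible polynomial $p(t)\npmod{\frak p}$ is separable and $\overline\beta$ is semisimple; consequently, by item 4 of Remark~\ref{remark:smoothly-regular-in-gl-n}, the centralizer $\frak{g}_{\beta}(\Bbb F)=\Bbb F[\overline\beta]$ is a field, on which the trace form is non-degenerate (the matrix trace restricts to the field trace of a separable extension). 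Then Remark~\ref{remark:v-g-is-trivial-if-trace-form-is-non-deg-on-g-beta} gives $c_{\beta,\rho}(g,h)=1$ for all $g,h$ and all $\rho$. Thus both hypotheses of Theorem~\ref{th:main-result} hold and Proposition~\ref{prop:regular-semisimple-case-for-gl-n} applies, producing a bijection of $\{\theta\in G_{\beta}(O_r)\sphat \mid \theta=\psi_{\beta}\ \text{on}\ G_{\beta}(O_r)\cap K_l(O_r)\}$ onto $\text{\rm Irr}(G(O_r)\mid\psi_{\beta})$.

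It then remains only to rewrite this parameter set. By item 6 of Remark~\ref{remark:smoothly-regular-in-gl-n} the centralizer algebra is $\{X\in M_n(O)\mid X\beta=\beta X\}=O[\beta]$, which is the integer ring $O_K$ of the unramified extension $K=F[\beta]$; thus $G_{\beta}$ is the torus whose points over an $O$-algebra $R$ are $(O_K\otimes_O R)^{\times}$. Since $K/F$ is unramified, $\varpi$ is a uniformizer of $O_K$ and $O_K\otimes_O O_r=O_K/\frak{p}_K^r$, so $G_{\beta}(O_r)=(O_K/\frak{p}_K^r)^{\times}$. Under this identification a character of $G_{\beta}(O_r)$ is the same as a character $\theta$ of $O_K^{\times}$ trivial on $1+\frak{p}_K^r$, that is, one factoring as $O_K^{\times}\to(O_K/\frak{p}_K^r)^{\times}\to\Bbb C^{\times}$, and $G_{\beta}(O_r)\cap K_l(O_r)$ corresponds to $(1+\frak{p}_K^l)/(1+\frak{p}_K^r)$.

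Finally I would translate the constraint into the displayed condition, using the trace dictionary already recorded before the statement. For $x\in O_K\subset M_n(O)$ the matrix trace of the regular representation equals the field trace, $\text{\rm tr}(x\beta)=T_{K/F}(x\beta)$, so for $h=(1+\varpi^l x)_r\in G_{\beta}(O_r)\cap K_l(O_r)$ the definition of $\psi_{\beta}$ gives $\psi_{\beta}(h)=\tau(\varpi^{-l^{\prime}}B(x,\beta))=\tau(\varpi^{-l^{\prime}}T_{K/F}(x\beta))$. Hence the requirement that $\theta=\psi_{\beta}$ on $G_{\beta}(O_r)\cap K_l(O_r)$ is exactly $\theta(1+\varpi^l x)=T_{K/F}(\varpi^{-l^{\prime}}\beta x)$ for all $x\in O_K$ (with $\tau$ understood), and the bijection of Proposition~\ref{prop:regular-semisimple-case-for-gl-n} becomes the asserted one.

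The computations are all routine; the only points requiring care are the reduction showing that irreducibility of $p(t)\npmod{\frak p}$ lands us in the regular semisimple case so that the Schur multiplier vanishes, and the bookkeeping identifying $G_{\beta}(O_r)$ with $(O_K/\frak{p}_K^r)^{\times}$ together with the replacement of $\text{\rm tr}$ by $T_{K/F}$. I do not expect any genuine obstacle, the proposition being essentially a specialization of Proposition~\ref{prop:regular-semisimple-case-for-gl-n} to $\beta\in[p(t)]$.
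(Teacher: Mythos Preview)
Your proposal is correct and follows the same route as the paper: the statement is obtained from Proposition~\ref{prop:regular-semisimple-case-for-gl-n} by checking that an irreducible $p(t)\npmod{\frak p}$ forces $\overline\beta$ to be regular semisimple (so that the hypotheses there are met), and then identifying $G_{\beta}(O_r)$ with $(O_K/\frak{p}_K^r)^{\times}$ and the matrix trace with $T_{K/F}$. Your write-up simply spells out in more detail the verifications that the paper records in the paragraph immediately preceding the proposition.
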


This kind of parametrization of 
$\text{\rm Irr}(G(O_r)\mid\psi_{\beta})$ is given first by Shintani 
\cite{Shintani1968} and then G\'erardin \cite{Gerardin1972}.

\subsection[]{}\label{subsec:group-of-symplectic-similitude}
Let $G=GSp_{2n}$ be the $O$-group scheme such that
$$
 GSp_{2n}(L)=\{g\in GL_{2n}(L)\mid gJ_n\,^t\!g=\nu(g)\cdot J_n
                \;\text{\rm with $\nu(g)\in L^{\times}$}\}
$$
 ($J_n=\begin{bmatrix}
        0&1_n\\
       -1_n&0
       \end{bmatrix}$) 
for all commutative $O$-algebra $L$. Then $G$ is a connected smooth
reductive $O$-group scheme. The Lie algebra $\frak{gsp}_{2n}$ of $G$ 
is an affine $O$-subscheme of $\frak{gl}_{2n}$ such that
$$
 \frak{gsp}_{2n}(L)=\{X\in\frak{gl}_{2n}(L)\mid
                        XJ_n+J_n\,^t\!X=\nu(X)\cdot J_n\;
                      \text{\rm with $\nu(X)\in L$}\}
$$
for all commutative $O$-algebra $L$. 
Assume that the characteristic of $\Bbb F$ does not divide $n$. Then
$G$ satisfies the conditions I), II) and III) of the subsection 
\ref{subsec:fundamental-setting}. The same arguments as Propositions 
\ref{prop:congruent-beta-for-gl-n} and \ref{prop:central-beta-for-gl-n} 
show

\begin{prop}\label{prop:congruent-beta-for-gsp-2n}
If 
$\beta\equiv\lambda\cdot 1_n+\beta_0\npmod{\frak{p}^{l^{\prime}}}$ 
for $\beta, \beta_0\in\frak{gsp}_{2n}(O)$ with $\lambda\in O$, then there
  exists a group homomorphism 
$\mu:O_r^{\times}\to\Bbb C^{\times}$ such that 
$$
 \pi\mapsto(\mu\circ\det)\otimes\pi
$$
gives a bijection of $\text{\rm Irr}(G(O_r)\mid\psi_{\beta_0})$ onto 
$\text{\rm Irr}(G(O_r)\mid\psi_{\beta})$.
\end{prop}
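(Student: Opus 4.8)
The plan is to produce a single one-dimensional character of $G(O_r)$ whose restriction to $K_l(O_r)$ accounts for the discrepancy between $\psi_{\beta}$ and $\psi_{\beta_0}$, and then let the Clifford-theoretic multiplicity bookkeeping finish the job, exactly as in Propositions \ref{prop:congruent-beta-for-gl-n} and \ref{prop:central-beta-for-gl-n}. The character will be $\mu\circ\det$, where $\det\colon GSp_{2n}(O_r)\to O_r^{\times}$ is the restriction of the determinant on $GL_{2n}$; since $GSp_{2n}$ is a closed subgroup scheme of $GL_{2n}$ this $\det$ is a genuine group homomorphism (in fact $\det=\nu^n$ on $GSp_{2n}$, though only its being a character is needed), so $\mu\circ\det$ is a one-dimensional representation of $G(O_r)$ for any $\mu\in O_r^{\times}\sphat$. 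To construct $\mu$, note that $r=l+l^{\prime}$ with $l^{\prime}\le l$ gives $2l\ge r$, hence $\varpi^{2l}\equiv 0\pmod{\varpi^r}$; the cross term in $(1+\varpi^l x)(1+\varpi^l y)$ therefore vanishes in $O_r$, so $(1+\varpi^l x)_r\mapsto\tau(\varpi^{-l^{\prime}}\lambda x)$ is a well-defined character of $1+\varpi^l O_r$, and I would extend it to all of $O_r^{\times}$ (possible since $\Bbb C^{\times}$ is divisible).

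The key computation is then identical to the $GL_n$ case. For $h=(1_{2n}+\varpi^l X)_r\in K_l(O_r)$ with $X\in\frak{gsp}_{2n}(O)$, the same vanishing $\varpi^{2l}\equiv 0\pmod{\varpi^r}$ yields $\det(1_{2n}+\varpi^l X)\equiv 1+\varpi^l\,\text{\rm tr}(X)\pmod{\varpi^r}$, so that $\mu\circ\det(h)=\tau(\varpi^{-l^{\prime}}\lambda\,\text{\rm tr}(X))$. Writing $\beta\equiv\lambda\cdot 1_{2n}+\beta_0\pmod{\varpi^{l^{\prime}}}$ and using $B(X,\beta)=\text{\rm tr}(X\beta)=\lambda\,\text{\rm tr}(X)+B(X,\beta_0)$ modulo $\varpi^{l^{\prime}}$, I obtain
$$
 \psi_{\beta}(h)=\tau\!\left(\varpi^{-l^{\prime}}B(X,\beta)\right)
 =\mu\circ\det(h)\cdot\psi_{\beta_0}(h)
$$
for all $h\in K_l(O_r)$; that is, $\psi_{\beta}=(\mu\circ\det)\cdot\psi_{\beta_0}$ on $K_l(O_r)$.

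Finally I would close the argument formally. Tensoring by the one-dimensional $\mu\circ\det$ is a bijection of $\text{\rm Irr}(G(O_r))$ with inverse $\pi\mapsto(\mu\circ\det)^{-1}\otimes\pi$, and on $K_l(O_r)$ it multiplies the character of a representation by $\mu\circ\det$. Hence for any irreducible $\pi$,
\begin{align*}
 \langle\psi_{\beta},(\mu\circ\det)\otimes\pi\rangle_{K_l(O_r)}
 &=\langle(\mu\circ\det)\psi_{\beta_0},(\mu\circ\det)\pi\rangle_{K_l(O_r)}\\
 &=\langle\psi_{\beta_0},\pi\rangle_{K_l(O_r)},
\end{align*}
so $\pi\mapsto(\mu\circ\det)\otimes\pi$ carries $\text{\rm Irr}(G(O_r)\mid\psi_{\beta_0})$ bijectively onto $\text{\rm Irr}(G(O_r)\mid\psi_{\beta})$. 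I expect no real obstacle: everything but the first step is the trace-form bookkeeping borrowed verbatim from the $GL_n$ arguments, and the sole place where the symplectic-similitude structure enters is the observation that $\det$ restricts to an honest character of $GSp_{2n}(O_r)$, so that $\mu\circ\det$ makes sense as a character of $G(O_r)$.
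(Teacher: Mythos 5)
Your proposal is correct and coincides with the paper's own proof: the paper disposes of this proposition by invoking verbatim the argument of Proposition \ref{prop:congruent-beta-for-gl-n}, namely choosing $\mu\in O_r^{\times}\sphat$ with $\mu((1+\varpi^lx)_r)=\tau(\varpi^{-l^{\prime}}\lambda x)$, using $\det(1+\varpi^lX)\equiv 1+\varpi^l\,\text{\rm tr}(X)\pmod{\frak{p}^r}$ to get $\psi_{\beta}=(\mu\circ\det)\cdot\psi_{\beta_0}$ on $K_l(O_r)$, and twisting. Your added remarks (well-definedness of $\mu$ via $2l\geq r$, extension by divisibility of $\Bbb C^{\times}$, and that $\det$ restricts to a character of $GSp_{2n}(O_r)$) only make explicit what the paper leaves implicit.
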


and

\begin{prop}\label{prop:central-beta-for-gsp-2n}
If $\beta\in\frak{gsp}_{2n}(O)$ is central modulo $\frak{p}$, then there
exists a group homomorphism $\mu:O_r^{\times}\to\Bbb C^{\times}$ such
that, for any 
$\pi\in\text{\rm Irr}(G(O_r)\mid\psi_{\beta})$, there exists a 
$\sigma\in\text{\rm Irr}(G(O_{r-1}))$ such that 
$\pi=(\mu\circ\det)\otimes(\sigma\circ\text{\rm proj})$ where 
$\text{\rm proj}:G(O_r)\to G(O_{r-1})$ is the canonical surjection.
\end{prop}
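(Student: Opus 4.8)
The plan is to transcribe the proof of Proposition \ref{prop:central-beta-for-gl-n}, the only genuinely new point being the description of the centre of $\frak{gsp}_{2n}$. Since the natural module of $\frak{sp}_{2n}\subset\frak{gsp}_{2n}$ is irreducible (the residue characteristic being odd), Schur's lemma shows that the centraliser of $\frak{sp}_{2n}(\Bbb F)$ in $\frak{gl}_{2n}(\Bbb F)$ is $\Bbb F\cdot 1_{2n}$, and each scalar $\lambda 1_{2n}$ does lie in $\frak{gsp}_{2n}$, satisfying the defining relation with $\nu=2\lambda$. Hence the centre of $\frak{gsp}_{2n}(\Bbb F)$ is exactly the scalars, so ``$\beta$ central modulo $\frak p$'' means $\beta\equiv\lambda\cdot 1_{2n}\npmod{\frak p}$ for some $\lambda\in O$. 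I would then take $\mu:O_r^{\times}\to\Bbb C^{\times}$ to be any extension to $O_r^{\times}$ of the character of the subgroup $1+\varpi^lO_r$ given by $(1+\varpi^lx)_r\mapsto\tau(\varpi^{-l^{\prime}}\lambda x)$; this is a well-defined homomorphism because $2l\geq r$ forces $(1+\varpi^lx)(1+\varpi^ly)\equiv 1+\varpi^l(x+y)\npmod{\frak{p}^r}$, while $\tau(\varpi^{-l^{\prime}}\lambda x)$ depends only on $x\npmod{\frak{p}^{l^{\prime}}}$.

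The core step is to show that $(\mu\circ\det)^{-1}\otimes\pi$ is trivial on $K_{r-1}(O_r)$ for every $\pi\in\text{\rm Irr}(G(O_r)\mid\psi_{\beta})$; granting this, the twist factors through $G(O_r)/K_{r-1}(O_r)=G(O_{r-1})$ and produces the desired $\sigma\in\text{\rm Irr}(G(O_{r-1}))$ with $\pi=(\mu\circ\det)\otimes(\sigma\circ\text{\rm proj})$. Writing a typical element of $K_{r-1}(O_r)$ as $h=1+\varpi^{r-1}X\npmod{\frak{p}^r}$ with $X\in\frak{gsp}_{2n}(O)$ (condition II with $l^{\prime}=1$), I would argue on the two factors separately. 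For the $\pi$-side, Clifford's theory gives that $\pi|_{K_l(O_r)}$ is a sum of conjugates $\psi_{\text{\rm Ad}(g)\beta}$ of $\psi_{\beta}$; each restricts to $K_{r-1}(O_r)\subset K_l(O_r)$ as $h\mapsto\tau(\varpi^{-1}B(X,\text{\rm Ad}(g)\beta))$, and since $\overline{\text{\rm Ad}(g)\beta}=\text{\rm Ad}(\overline g)(\lambda 1_{2n})=\lambda 1_{2n}$ is $\text{\rm Ad}$-invariant, all of these characters collapse to the single one $h\mapsto\tau(\varpi^{-1}\lambda\,\text{\rm tr}(X))$, so $\pi(h)$ is the scalar $\tau(\varpi^{-1}\lambda\,\text{\rm tr}(X))$. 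For the determinant side, $\det(1+\varpi^{r-1}X)\equiv 1+\varpi^{r-1}\text{\rm tr}(X)\npmod{\frak{p}^r}$, since the quadratic and higher terms lie in $\frak{p}^{2(r-1)}\subset\frak{p}^r$, whence $\mu(\det h)=\tau(\varpi^{-l^{\prime}}\lambda\varpi^{r-1-l}\text{\rm tr}(X))=\tau(\varpi^{-1}\lambda\,\text{\rm tr}(X))$, the exponent being $-l^{\prime}+(r-1-l)=-1$ because $l+l^{\prime}=r$. The two scalars agree, giving the claimed triviality.

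The step I expect to require the most care is the identification of the centre: it is precisely the $\text{\rm Ad}$-invariance of the scalar $\overline\beta=\lambda 1_{2n}$ that forces every conjugate $\psi_{\text{\rm Ad}(g)\beta}$ to restrict to the same character of $K_{r-1}(O_r)$, and hence makes $\pi$ act by a scalar there; without the scalar form of $\overline\beta$ the restriction of $\pi$ to $K_{r-1}(O_r)$ need not be isotypic and the twist by $\mu\circ\det$ would fail to trivialise it. Once this is in place, the remaining determinant linearisation is exactly the computation already carried out for $GL_n$, so no further difficulty arises; in particular the standing hypothesis $\text{\rm char}\,\Bbb F\nmid n$, needed only to guarantee conditions I)--III) for $GSp_{2n}$, plays no further role in the argument.
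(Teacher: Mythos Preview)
Your proposal is correct and follows exactly the approach the paper intends: the paper simply states that ``the same arguments as Propositions \ref{prop:congruent-beta-for-gl-n} and \ref{prop:central-beta-for-gl-n} show'' the result, and you have faithfully transcribed that argument, supplying the additional verification that the centre of $\frak{gsp}_{2n}(\Bbb F)$ consists of scalars. Your Clifford-theory justification that all conjugates $\psi_{\text{\rm Ad}(g)\beta}$ agree on $K_{r-1}(O_r)$, and the determinant computation, match the paper's implicit reasoning line for line.
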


If $\beta\in\frak{gsp}_{2n}(O)$ is
smoothly regular with respect to $G$ over $F$ and over $\Bbb F$, then 
$G_{\beta}$ is a smooth commutative $O$-group scheme. If further 
$\overline\beta\in\frak{gsp}_{2n}(\Bbb F)$ is semisimple, 
then $c_{\beta,\rho}(g,h)=1$ for all 
$g,h\in G_{\beta}(\Bbb F)^{(c)}=G_{\beta}(\Bbb F)$ and 
$\rho\in\frak{gsp}_{2n,\beta}(\Bbb F)\sphat$
(see Remark
\ref{remark:v-g-is-trivial-if-trace-form-is-non-deg-on-g-beta}). 
In this case Theorem \ref{th:main-result} gives

\begin{prop}\label{prop:regular-semisimple-case-for-gsp-2n}
There exists a bijection 
$\theta\mapsto
 \text{\rm Ind}_{G(O_r,\beta)}^{G(O_r)}\sigma_{\beta,\theta}$ of the
 set 
$$
 \left\{\theta\in G_{\beta}(O_r)\sphat\;\;\;
         \text{\rm s.t. $\theta=\psi_{\beta}$ on 
               $G_{\beta}(O_r)\cap K_l(O_r)$}\right\}
$$
onto $\text{\rm Irr}(G(O_r)\mid\psi_{\beta})$.
\end{prop}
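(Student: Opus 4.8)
The plan is to obtain this proposition as a direct specialization of Theorem \ref{th:main-result}, composed with the Clifford-theoretic bijection recorded in subsection \ref{subsec:clifford-theory-in-general}. Accordingly, the substantive work is to verify the two hypotheses of Theorem \ref{th:main-result} for the element $\beta$ at hand; once these are in place the statement follows by pasting two known bijections together.

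First I would check condition (1) of Theorem \ref{th:main-result}, namely that $G_\beta$ is a commutative smooth $O$-group scheme. By hypothesis $\beta$ is smoothly regular with respect to $G=GSp_{2n}$ over both $F$ and $\Bbb F$, and $\overline\beta\in\frak{gsp}_{2n}(\Bbb F)$ is semisimple. Thus $\overline\beta$ is regular semisimple, so its centralizer is a maximal torus and in particular connected, and the same holds over $F$. Hence $\beta$ is smoothly regular and connected with respect to $G$, and Theorem \ref{th:sufficient-condition-for-smooth-commutativeness-of-g-beta} yields that $G_\beta$ is commutative and smooth over $O$; in particular $G_\beta(\Bbb F)^{(c)}=G_\beta(\Bbb F)$.

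Next I would verify condition (2), the triviality of the Schur multiplier $[c_{\beta,\rho}]$ for every $\rho\in\frak{gsp}_{2n,\beta}(\Bbb F)\sphat$. Here I would use the semisimplicity of $\overline\beta$ together with the standing assumption that the residue characteristic does not divide $n$: this forces $B$ to be non-degenerate on $\frak{g}_\beta(\Bbb F)\times\frak{g}_\beta(\Bbb F)$, the centralizer of a regular semisimple element being the Lie algebra of a maximal torus. Remark \ref{remark:v-g-is-trivial-if-trace-form-is-non-deg-on-g-beta} then applies directly: choosing the $\Bbb F$-linear section $v\mapsto[v]$ so that $[v]\in\frak{g}_\beta(\Bbb F)^\perp$ for all $v$, one gets $v_g=0$ for all $g$, whence $c_{\beta,\rho}(g,h)=1$ identically and the class $[c_{\beta,\rho}]$ is trivial.

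With both hypotheses in force, Theorem \ref{th:main-result} furnishes the bijection $\theta\mapsto\sigma_{\beta,\theta}$ from the indicated set of characters of $G_\beta(O_r)$ onto $\text{\rm Irr}(G(O_r,\beta)\mid\psi_\beta)$. Finally I would compose this with the bijection $\sigma\mapsto\text{\rm Ind}_{G(O_r,\beta)}^{G(O_r)}\sigma$ of $\text{\rm Irr}(G(O_r,\beta)\mid\psi_\beta)$ onto $\text{\rm Irr}(G(O_r)\mid\psi_\beta)$ provided by part (2) of Clifford's theory in subsection \ref{subsec:clifford-theory-in-general}; the composite $\theta\mapsto\text{\rm Ind}_{G(O_r,\beta)}^{G(O_r)}\sigma_{\beta,\theta}$ is exactly the asserted bijection. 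The only genuinely non-formal point is the verification that the trace form remains non-degenerate on $\frak{g}_\beta(\Bbb F)$ in the semisimple case, which is where the hypothesis on the residue characteristic is consumed; everything else is bookkeeping with the two quoted bijections.
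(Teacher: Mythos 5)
Your proposal matches the paper's own (very terse) justification: the proposition is obtained by checking the two hypotheses of Theorem \ref{th:main-result} under the standing assumptions that $\beta$ is smoothly regular over $F$ and $\Bbb F$ and that $\overline\beta$ is semisimple --- commutativity and smoothness of $G_\beta$ via Theorem \ref{th:sufficient-condition-for-smooth-commutativeness-of-g-beta}, and vanishing of $c_{\beta,\rho}$ via Remark \ref{remark:v-g-is-trivial-if-trace-form-is-non-deg-on-g-beta} --- and then composing with the Clifford-theoretic induction bijection. Your write-up supplies the connectedness and non-degeneracy details that the paper leaves implicit, but the route is the same.
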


Let us consider a $\beta\in\frak{gsp}_{2n}(O)\subset\frak{gl}_{2n}(O)$
such that the characteristic polynomial $\chi_{\beta}(t)\in O[t]$ is
irreducible modulo $\frak{p}$. In this case $F[\beta]$ is unramified
field extension of $F$ of degree $2n$ such that $O[\beta]$ is the
integer ring of $F[\beta]$ so that $\beta\in O[\beta]^{\times}$. 

Now fix an unramified field extension $K$ of $F$ of degree $2n$. Let 
$O_K$ be the integer ring of $K$ with the maximal ideal 
$\frak{p}_K=\varpi O_K$. The finite field $\Bbb F=O/\frak{p}$ is
identified with a subfield of $\Bbb K=O_K/\frak{p}_K$. Then $K/F$ is a
Galois extension whose Galois group $\text{\rm Gal}(K/F)$ is
isomorphic to $\text{\rm Gal}(\Bbb K/\Bbb F)$ by the mapping which
sends $\sigma\in\text{\rm Gal}(K/F)$ to 
$\overline\sigma\in\text{\rm Gal}(\Bbb K/\Bbb F)$ where 
$(x\npmod{\frak{p}_K})^{\overline\sigma}=x^{\sigma}\npmod{\frak{p}_K}$. 

Let $\tau\in\text{\rm Gal}(K/F)$ be the unique element of order $2$. 
Fix an $\varepsilon\in O_K^{\times}$ such that 
$\varepsilon^{\tau}+\varepsilon=0$. Then the $F$-vector space $K$ is a
symplectic $F$-space with respect to the non-degenerate alternating
form 
$$
 D_{\varepsilon}(x,y)=T_{K/F}(\varepsilon xy^{\tau})
 \qquad
 (x,y\in K)
$$
on $K$ with a polarization $K=K_-\oplus K_+$ where 
$$
 K_{\pm}=\{x\in K\mid x^{\tau}=\pm x\},
$$
and $F\subset K_+\subset K$ is a subfield such that $(K_+:F)=n$. 
Let $\{v_1,\cdots,v_n\}$ be a $O$-basis of $O_{K_+}=O_K\cap K_+$ the
integer ring of $K_+$. Since $K/F$ is unramified, we have a $O$-basis
of $O_{K_+}$ $\{v_1^{\ast},\cdots,v_n^{\ast}\}$ such that 
$T_{K_+/F}(v_iv_j^{\ast})=\delta_{ij}$. Put 
$u_i=\varepsilon^{-1}v_i^{\ast}\subset K_-$. Then 
$\{u_1,\cdots,u_n,v_1,\cdots,v_n\}$ 
is a symplectic $F$-basis of $K=K_-\oplus K_+$ and an $O$-basis of 
$O_K=\varepsilon^{-1}O_{K_+}\oplus O_{K_+}$. 

Now our $O$-group scheme $G=GSp_{2n}$ and its Lie algebra 
$\frak{gsp}_{2n}$ are defined by
$$
 GSp_{2n}(L)=\left\{g\in GL_L(O_K{\otimes}L)\biggm|
                    \begin{array}{l}
                    D_{\varepsilon}(gx,gy)=\nu(g)D_{\varepsilon}(x,y)\\
                    \text{\rm for $\forall x,y\in O_K{\otimes}_OL$ with 
                              $\nu(g)\in L^{\times}$}
                    \end{array}\right\}
$$
and by
$$
 \frak{gsp}_{2n}(L)
  =\left\{X\in\text{\rm End}_L(O_K{\otimes}L)\biggm|
           \begin{array}{l}
            D_{\varepsilon}(Xx,y)+D_{\varepsilon}(x,Xy)
                       =\nu(X)D_{\varepsilon}(x,y)\\
            \text{\rm for $\forall x,y\in O_K{\otimes}_OL$ 
                               with $\nu(X)\in L$}
           \end{array}\right\}
$$
for all commutative $O$-algebra $L$. 

Take a $\beta\in K_-\cap O_K$ such that $O_K=O[\beta]$. Identify 
$\beta\in K$ with the element $x\mapsto x\beta$ of 
$\frak{gsp}_{2n}(O)\subset\frak{gl}_{2n}(O)$. 
Then the characteristic polynomial $\chi_{\beta}(t)\in O[t]$ of 
$\beta\in\frak{gl}_{2n}(O)$ is irreducible modulo $\frak{p}$. We have 
$$
 G_{\beta}(O_r)=G(O_r)\cap\left(O_K/\frak{p}_K^r\right)^{\times}
               =\{\gamma\npmod{\frak{p}_K^r}\mid
                    \gamma\in U_{K/F}\}
$$
where
$$
 U_{K/F}=\{\gamma\in O_K^{\times}\mid 
            \gamma\cdot\gamma^{\tau}\in O^{\times}\}
$$
is a subgroup of $O_K^{\times}$. In this case we have 
$\psi_{\beta}(h)=\tau(\varpi^{-l^{\prime}}T_{K/F}(\beta x))$ for all 
$$
 h=1+\varpi^lx\npmod{\frak{p}_K^r}\in K_l(O_r)\cap G_{\beta}(O_r)
 \subset\left(O_K/\frak{p}_K^r\right)^{\times}.
$$
Then Proposition \ref{prop:regular-semisimple-case-for-gsp-2n} gives 

\begin{prop}\label{prop:shintani-gerardin-type-parametrization-for-gsp-2n}
There exists a bijection 
$\theta\mapsto
 \text{\rm Ind}_{G(O_r,\beta)}^{G(O_r)}\sigma_{\beta,\theta}$ of the
 set 
$$
 \left\{\begin{array}{l}
        \theta:U_{K/F}\to\left(O_K/\frak{p}_K^r\right)^{\times}
               \to\Bbb C^{\times}:\text{\rm group homomorphism}\\
        \text{\rm such that 
  $\theta(\gamma)
   =\tau(\varpi^{-l^{\prime}}T_{K/F}(\varpi^{-l^{\prime}}\beta x))
   \;\;
   \forall\gamma=1+\varpi^lx\in U_{K/F}$}
         \end{array}\right\}
$$
onto $\text{\rm Irr}(G(O_r)\mid\psi_{\beta})$.
\end{prop}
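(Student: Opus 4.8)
The plan is to deduce this proposition directly from Proposition \ref{prop:regular-semisimple-case-for-gsp-2n} by verifying its hypotheses for the particular $\beta\in K_-\cap O_K$ with $O_K=O[\beta]$, and then rewriting its abstract parametrizing set in the concrete form displayed in the statement. First I would check that $\beta$ is smoothly regular and that $\overline\beta$ is semisimple. Since $O_K=O[\beta]$ and $K/F$ is unramified of degree $2n$, the characteristic polynomial $\chi_\beta(t)\in O[t]$ is irreducible modulo $\frak{p}$; as an irreducible polynomial over the finite field $\Bbb F$ is separable, $\overline\beta$ is semisimple and, by the equivalence of conditions in Remark \ref{remark:smoothly-regular-in-gl-n} (minimal polynomial equal to characteristic polynomial), smoothly regular with respect to $GL_{2n}$ over $\Bbb F$. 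By the discussion preceding Proposition \ref{prop:conjectural-proposition-on-triviality-of-schur-multiplier}, for a group of symplectic similitudes this is equivalent to smooth regularity with respect to $G=GSp_{2n}$ over $\Bbb F$; smooth regularity over $F$ follows because $\beta$ generates the maximal torus $U_{K/F}$ identified below. Thus $G_\beta$ is smooth commutative over $O$ and, $\overline\beta$ being semisimple, Remark \ref{remark:v-g-is-trivial-if-trace-form-is-non-deg-on-g-beta} gives $c_{\beta,\rho}\equiv 1$, so all the assumptions feeding Proposition \ref{prop:regular-semisimple-case-for-gsp-2n} (equivalently Theorem \ref{th:main-result}) are in force.

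Next I would make the centralizer explicit, which is the heart of the matter. Because $O_K=O[\beta]$, an element of $GSp_{2n}(L)$ commuting with multiplication by $\beta$ is precisely an $O_K{\otimes}_OL$-linear automorphism of the rank-one module $O_K{\otimes}_OL$, that is, multiplication by some $\gamma\in(O_K{\otimes}_OL)^{\times}$. Imposing the similitude relation $D_\varepsilon(\gamma x,\gamma y)=\nu(g)D_\varepsilon(x,y)$ and using $(\gamma y)^\tau=\gamma^\tau y^\tau$ turns it into
\[
 T_{K/F}\left(\varepsilon\,\gamma\gamma^\tau\,xy^\tau\right)
 =\nu(g)\,T_{K/F}\left(\varepsilon xy^\tau\right)
 \qquad(\forall x,y),
\]
which by non-degeneracy of $D_\varepsilon$ is equivalent to $\gamma\gamma^\tau=\nu(g)\in L^{\times}$. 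This identifies $G_\beta(O_r)$ with the image of $U_{K/F}=\{\gamma\in O_K^{\times}\mid\gamma\gamma^\tau\in O^{\times}\}$ in $\left(O_K/\frak{p}_K^r\right)^{\times}$, so that a character in $G_\beta(O_r)\sphat$ is the same datum as a group homomorphism $\theta:U_{K/F}\to\left(O_K/\frak{p}_K^r\right)^{\times}\to\Bbb C^{\times}$.

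Finally I would match the side condition on characters. For $X$ the multiplication by $x\in O_K$, the trace form computes as $B(X,\beta)=\text{\rm tr}(X\beta)=T_{K/F}(x\beta)$, since the trace of multiplication by $z$ on the free rank-$2n$ module $O_K$ is the field trace $T_{K/F}(z)$. Hence on $G_\beta(O_r)\cap K_l(O_r)$ the character $\psi_\beta$ sends $\gamma=1+\varpi^l x\npmod{\frak{p}_K^r}$ to $\tau\left(\varpi^{-l^{\prime}}T_{K/F}(\beta x)\right)$, so the constraint $\theta=\psi_\beta$ there becomes exactly the condition $\theta(1+\varpi^l x)=\tau\left(\varpi^{-l^{\prime}}T_{K/F}(\beta x)\right)$ appearing in the statement. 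Combining the three steps, the bijection of Proposition \ref{prop:regular-semisimple-case-for-gsp-2n} transports to the asserted bijection $\theta\mapsto\text{\rm Ind}_{G(O_r,\beta)}^{G(O_r)}\sigma_{\beta,\theta}$ onto $\text{\rm Irr}(G(O_r)\mid\psi_\beta)$.

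The step I expect to be the real obstacle is the middle one: pinning down $G_\beta$ as the unitary-type group $U_{K/F}$ with the similitude factor $\nu$ corresponding to $\gamma\mapsto\gamma\gamma^\tau$, and in particular checking that the similitude constraint collapses to $\gamma\gamma^\tau\in O^{\times}$ over an \emph{arbitrary} $O$-algebra $L$, so that the identification is an isomorphism of $O$-group schemes and not merely of point sets (this is also what underlies the smoothness and rank count needed in the first step). The regularity verification and the trace-form bookkeeping are then routine.
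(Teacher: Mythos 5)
Your proposal matches the paper's route exactly: the paper deduces this proposition from Proposition \ref{prop:regular-semisimple-case-for-gsp-2n} after identifying $G_{\beta}(O_r)$ with the image of $U_{K/F}$ in $\left(O_K/\frak{p}_K^r\right)^{\times}$ and computing $\psi_{\beta}$ on $G_{\beta}(O_r)\cap K_l(O_r)$ via $B(X,\beta)=T_{K/F}(x\beta)$, which is precisely the content of your three steps. The only difference is that you spell out the verifications (semisimplicity and smooth regularity of $\overline\beta$, the collapse of the similitude condition to $\gamma\gamma^{\tau}\in O^{\times}$) that the paper leaves implicit in the surrounding text.
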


\subsection[]{}\label{subsec:general-orthogonal-group}
Take a $S\in M_n(O)$ such that $^tS=S$ and $\det S\in O^{\times}$. 
Let $G=GO(S)$ be the $O$-group scheme such tha
$$
 G(L)=\{g\in GL_n(L)\mid gS^t\!g=\nu(g)\cdot S\;
          \text{\rm with $\nu(g)\in L^{\times}$}\}
$$
for all commutative $O$-algebra $L$. Then $G$ is a connected smooth
reductive $O$-group scheme. The Lie algebra $\frak{g}=\frak{go}(S)$ of
$G$ is an affine $O$-subscheme of $\frak{gl}_n$ such that
$$
 \frak{g}(L)=\{X\in\frak{gl}_n(L)\mid 
                 XS+S^t\!X=\nu(X)\cdot S\;
                 \text{\rm with $\nu(X)\in L$}\}
$$
for all commutative $O$-algebra $L$. 
Assume that the characteristic of $\Bbb F$ does not divide $n$. Then
$G$ satisfies the conditions I), II) and III) of the subsection 
\ref{subsec:fundamental-setting}. 
The same arguments as Propositions 
\ref{prop:congruent-beta-for-gl-n} and \ref{prop:central-beta-for-gl-n} 
show

\begin{prop}\label{prop:congruent-beta-for-go(s)}
If 
$\beta\equiv\lambda\cdot 1_n+\beta_0\npmod{\frak{p}^{l^{\prime}}}$ 
for $\beta, \beta_0\in\frak{g}(O)$ with $\lambda\in O$, then there
  exists a group homomorphism 
$\mu:O_r^{\times}\to\Bbb C^{\times}$ such that 
$$
 \pi\mapsto(\mu\circ\det)\otimes\pi
$$
gives a bijection of $\text{\rm Irr}(G(O_r)\mid\psi_{\beta_0})$ onto 
$\text{\rm Irr}(G(O_r)\mid\psi_{\beta})$.
\end{prop}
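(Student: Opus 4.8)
The plan is to repeat verbatim the argument of Proposition \ref{prop:congruent-beta-for-gl-n}. The only structural facts about $G=GO(S)$ that enter are that the form $B$ on $\frak{g}$ is the restriction of the trace form on $\frak{gl}_n$, so that $\psi_{\beta}\big((1_n+\varpi^lX)_r\big)=\tau(\varpi^{-l^{\prime}}\text{\rm tr}(X\beta))$ for $X\in\frak{g}(O)$, and that the determinant restricts to a group homomorphism $\det:G(O_r)\to O_r^{\times}$. Both hold for any closed $O$-subgroup scheme of $GL_n$, so no feature peculiar to the orthogonal similitude group is needed; in particular the running hypothesis that the characteristic of $\Bbb F$ does not divide $n$ (used only to secure conditions I)--III)) plays no role in this statement.

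First I would fix a group homomorphism $\mu:O_r^{\times}\to\Bbb C^{\times}$ with $\mu\big((1+\varpi^l)_rx\big)=\tau(\varpi^{-l^{\prime}}\lambda x)$ for all $x\in O$; such a $\mu$ exists because $x\mapsto\tau(\varpi^{-l^{\prime}}\lambda x)$ defines a character of the subgroup $(1+\varpi^lO)/(1+\varpi^rO)$ of $O_r^{\times}$, which can then be extended to all of $O_r^{\times}$. Next I would check, for $h=(1_n+\varpi^lX)_r\in K_l(O_r)$ with $X\in\frak{g}(O)$, the identity $\psi_{\beta}(h)=(\mu\circ\det)(h)\cdot\psi_{\beta_0}(h)$. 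Indeed, from $\beta\equiv\lambda\cdot 1_n+\beta_0\npmod{\frak{p}^{l^{\prime}}}$ one gets $\text{\rm tr}(X\beta)\equiv\lambda\,\text{\rm tr}(X)+\text{\rm tr}(X\beta_0)\npmod{\frak{p}^{l^{\prime}}}$, whence $\psi_{\beta}(h)=\tau(\varpi^{-l^{\prime}}\lambda\,\text{\rm tr}(X))\cdot\psi_{\beta_0}(h)$; and since $2l\geq r$ one has $\det(1_n+\varpi^lX)\equiv 1+\varpi^l\text{\rm tr}(X)\npmod{\frak{p}^r}$, so the defining property of $\mu$ makes the first factor equal to $(\mu\circ\det)(h)$.

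Finally, $\mu\circ\det$ is a one-dimensional character of $G(O_r)$, so $\pi\mapsto(\mu\circ\det)\otimes\pi$ is a bijection of $\text{\rm Irr}(G(O_r))$ onto itself with inverse $\pi\mapsto(\mu\circ\det)^{-1}\otimes\pi$. The identity just established gives $\langle\psi_{\beta_0},\pi\rangle_{K_l(O_r)}=\langle\psi_{\beta},(\mu\circ\det)\otimes\pi\rangle_{K_l(O_r)}$, so this bijection carries $\text{\rm Irr}(G(O_r)\mid\psi_{\beta_0})$ onto $\text{\rm Irr}(G(O_r)\mid\psi_{\beta})$, as required. There is essentially no serious obstacle here: the content is the reduction modulo $\frak{p}^r$ of $\det(1_n+\varpi^lX)$ together with the linearity of the trace, and the only points warranting care are the existence of the extension $\mu$ and the observation that the defining congruence of $\beta$ need hold only modulo $\frak{p}^{l^{\prime}}$ for the equality on $K_l(O_r)$ to be exact.
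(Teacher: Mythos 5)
Your proposal is correct and follows essentially the same route as the paper, which proves the $GL_n$ case (Proposition \ref{prop:congruent-beta-for-gl-n}) by exactly this computation --- the defining property of $\mu$, the congruence $\det(1_n+\varpi^lX)\equiv 1+\varpi^l\,\text{\rm tr}(X)\npmod{\frak{p}^r}$, and the resulting identity $\psi_{\beta}=(\mu\circ\det)\cdot\psi_{\beta_0}$ on $K_l(O_r)$ --- and then simply invokes ``the same arguments'' for $GO(S)$. Your added remarks on the existence of the extension $\mu$ and on why nothing specific to the orthogonal similitude group is needed are accurate but not a departure from the paper's argument.
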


and

\begin{prop}\label{prop:central-beta-for-go(s)}
If $\beta\in\frak{g}(O)$ is central modulo $\frak{p}$, then there
exists a group homomorphism $\mu:O_r^{\times}\to\Bbb C^{\times}$ such
that, for any 
$\pi\in\text{\rm Irr}(G(O_r)\mid\psi_{\beta})$, there exists a 
$\sigma\in\text{\rm Irr}(G(O_{r-1}))$ such that 
$\pi=(\mu\circ\det)\otimes(\sigma\circ\text{\rm proj})$ where 
$\text{\rm proj}:G(O_r)\to G(O_{r-1})$ is the canonical surjection.
\end{prop}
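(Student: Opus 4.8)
The plan is to transcribe the proof of Proposition \ref{prop:central-beta-for-gl-n} almost verbatim, since the only two ingredients that proof used---the determinant character and the congruence $\det(1_n+\varpi^lX)\equiv 1+\varpi^l\text{\rm tr}(X)$---survive unchanged for $G=GO(S)$. First I would record that ``central modulo $\frak{p}$'' is read, exactly as in the general linear case, as $\overline\beta\equiv\lambda\cdot 1_n\npmod{\frak{p}}$ for some $\lambda\in O$, and fix a group homomorphism $\mu:O_r^{\times}\to\Bbb{C}^{\times}$ whose restriction to the subgroup $1+\varpi^lO_r\subset O_r^{\times}$ is $\mu((1+\varpi^lx)_r)=\tau(\varpi^{-l^{\prime}}\lambda x)$. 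This is legitimate because $x\mapsto\tau(\varpi^{-l^{\prime}}\lambda x)$ factors through $O/\frak{p}^{l^{\prime}}\,\tilde{\to}\,1+\varpi^lO_r$ and any character of a subgroup of the finite abelian group $O_r^{\times}$ extends to the whole group. Since for $g\in GO(S)$ one has $(\det g)^2=\nu(g)^n$ with $\det S\in O^{\times}$, the map $\det$ sends $G(O_r)$ into $O_r^{\times}$, so $\mu\circ\det$ is a character of $G(O_r)$.

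Next I would verify that $\psi_{\beta}$ and $\mu\circ\det$ coincide on $K_{r-1}(O_r)$. For $h=(1+\varpi^{r-1}X)_r$ with $X\in\frak{g}(O)$, writing $r=l+l^{\prime}$ gives $\psi_{\beta}(h)=\tau(\varpi^{-1}B(X,\beta))=\widehat\tau(B(\overline X,\overline\beta))$, while $\det(1+\varpi^{r-1}X)\equiv 1+\varpi^{r-1}\text{\rm tr}(X)\npmod{\frak{p}^r}$ gives $\mu(\det h)=\widehat\tau(\lambda\cdot\text{\rm tr}(\overline X))$. Since $\overline\beta=\lambda\cdot 1_n$ we have $B(\overline X,\overline\beta)=\lambda\cdot\text{\rm tr}(\overline X)$, so the two characters agree on $K_{r-1}(O_r)$.

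Finally I would invoke Clifford's theory just as in the general linear case. For $\pi\in\text{\rm Irr}(G(O_r)\mid\psi_{\beta})$ the restriction $\pi|_{K_l(O_r)}$ is a sum of the $G(O_r)$-conjugates $\psi_{\text{\rm Ad}(g)\beta}$; restricting further to $K_{r-1}(O_r)\subset K_l(O_r)$, each such conjugate sends $(1+\varpi^{r-1}X)_r$ to $\widehat\tau(B(\overline X,\text{\rm Ad}(\overline g)\overline\beta))$, and the centrality of $\overline\beta$ forces $\text{\rm Ad}(\overline g)\overline\beta=\overline\beta$. Hence every constituent restricts to the single character $\psi_{\beta}=\mu\circ\det$ on the abelian group $K_{r-1}(O_r)$, so $\pi$ acts there by the scalar $\psi_{\beta}(h)=\mu(\det h)$. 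Consequently $(\mu\circ\det)^{-1}\otimes\pi$ is trivial on $K_{r-1}(O_r)=\text{\rm Ker}(G(O_r)\to G(O_{r-1}))$ and descends to an irreducible $\sigma\in\text{\rm Irr}(G(O_{r-1}))$ with $\pi=(\mu\circ\det)\otimes(\sigma\circ\text{\rm proj})$.

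There is essentially no obstacle here beyond bookkeeping, and the argument is structurally identical to the $GL_n$ case. The only orthogonal-specific points are the harmless facts that $\det$ lands in $O_r^{\times}$ and that the scalar $\lambda\cdot 1_n$ lies in $\frak{g}(\Bbb F)$. The one substantive step---the step to get right---is that the centrality of $\overline\beta$ collapses all the conjugate characters $\psi_{\text{\rm Ad}(g)\beta}$ to $\psi_{\beta}$ on $K_{r-1}(O_r)$; this is what forces $\pi$ to act by a scalar on the kernel of $G(O_r)\to G(O_{r-1})$ and thereby permits the descent, and it is immediate from the hypothesis.
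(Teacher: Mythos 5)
Your proposal is correct and follows exactly the route the paper intends: the paper gives no separate proof for $GO(S)$ but simply states that ``the same arguments as Propositions \ref{prop:congruent-beta-for-gl-n} and \ref{prop:central-beta-for-gl-n} show'' the result, and your transcription of that argument (choice of $\mu$ via the scalar $\lambda$, the congruence $\det(1+\varpi^{r-1}X)\equiv 1+\varpi^{r-1}\operatorname{tr}(X)$, and the collapse of all conjugates $\psi_{\text{\rm Ad}(g)\beta}$ to $\psi_{\beta}$ on $K_{r-1}(O_r)$ by centrality, forcing $(\mu\circ\det)^{-1}\otimes\pi$ to be trivial on the kernel of $G(O_r)\to G(O_{r-1})$) is precisely the paper's argument, with somewhat more detail than the original supplies.
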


If $\beta_s\in\frak{g}(L)$ ($L=F$ or $L=\Bbb F$) is semisimple, then the
centralizer $Z_{G{\otimes}_OL}(\beta_s)$ is connected and its center is
also connected. Hence if $\beta\in\frak{g}(O)$ is smoothly regular
over $F$ and over $\Bbb F$, then $G_{\beta}$ is a smooth commutative
$O$-group scheme. If further $\overline\beta\in\frak{g}(\Bbb F)$ is
semisimple, then $c_{\beta,\rho}(g,h)=1$ for all 
$g,h\in G_{\beta}(\Bbb F)^{(c)}=G_{\beta}(\Bbb F)$ and all 
$\rho\in\frak{g}_{\beta}(\Bbb F)\sphat$ (see Remark 
\ref{remark:v-g-is-trivial-if-trace-form-is-non-deg-on-g-beta}). 
In this case Theorem \ref{th:main-result} gives

\begin{prop}\label{prop:regular-semisimple-case-for-go(s)}
There exists a bijection 
$\theta\mapsto
 \text{\rm Ind}_{G(O_r,\beta)}^{G(O_r)}\sigma_{\beta,\theta}$ of the
 set 
$$
 \left\{\theta\in G_{\beta}(O_r)\sphat\;\;\;
         \text{\rm s.t. $\theta=\psi_{\beta}$ on 
               $G_{\beta}(O_r)\cap K_l(O_r)$}\right\}
$$
onto $\text{\rm Irr}(G(O_r)\mid\psi_{\beta})$.
\end{prop}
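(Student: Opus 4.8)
The plan is to obtain this statement as an immediate application of Theorem \ref{th:main-result} composed with the Clifford-theoretic correspondence 2) of subsection \ref{subsec:clifford-theory-in-general}; once the two hypotheses of Theorem \ref{th:main-result} are verified under the standing assumptions (that $\beta$ is smoothly regular with respect to $G$ over both $F$ and $\Bbb F$, and that $\overline\beta\in\frak{g}(\Bbb F)$ is semisimple), the remaining content is the composition of two bijections already at our disposal. This is the exact pattern already used for Propositions \ref{prop:regular-semisimple-case-for-gl-n} and \ref{prop:regular-semisimple-case-for-gsp-2n}, so I would organize the argument so that only the inputs special to $GO(S)$ carry genuine weight.

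First I would check condition 1) of Theorem \ref{th:main-result}. Smooth regularity over $F$ and over $\Bbb F$ says that each fiber $G_{\beta}{\otimes}_OK$ ($K=F,\Bbb F$) is smooth of dimension $\text{\rm rank}(G)$, whence $G_{\beta}$ is smooth over $O$ by the equivalence recalled in the proof of Theorem \ref{th:sufficient-condition-for-smooth-commutativeness-of-g-beta}. For commutativity I would use the feature of orthogonal similitude groups recorded just above the statement: for the semisimple part $\beta_s$ of $\beta$ over each of $F$ and $\Bbb F$ the centralizer $Z_{G{\otimes}_OL}(\beta_s)$ is connected with connected center, so $\beta$ is \emph{connected} with respect to $G$ in the sense of subsection \ref{subsec:suff-condition-for-smooth-commutativeness-of-g-beta}, and Theorem \ref{th:sufficient-condition-for-smooth-commutativeness-of-g-beta} then yields that $G_{\beta}$ is commutative and smooth over $O$.

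Next I would check condition 2), the triviality of $[c_{\beta,\rho}]$ for every $\rho\in\frak{g}_{\beta}(\Bbb F)\sphat$. By commutativity we may identify $G_{\beta}(\Bbb F)^{(c)}=G_{\beta}(\Bbb F)$ (Remark \ref{remark:remarks-on-main-result}, 5)). Since $\overline\beta$ is semisimple the restricted trace form $B:\frak{g}_{\beta}(\Bbb F)\times\frak{g}_{\beta}(\Bbb F)\to\Bbb F$ is non-degenerate; granting this, I would choose the $\Bbb F$-linear section $v\mapsto[v]$ with image in the $\text{\rm Ad}(G_{\beta}(\Bbb F))$-invariant complement $\frak{g}_{\beta}(\Bbb F)^{\perp}$, so that Remark \ref{remark:v-g-is-trivial-if-trace-form-is-non-deg-on-g-beta} gives $v_g=0$ for all $g$ and hence $c_{\beta,\rho}(g,h)=1$ identically. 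Thus $[c_{\beta,\rho}]$ is trivial.

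With both conditions in force, Theorem \ref{th:main-result} supplies the bijection $\theta\mapsto\sigma_{\beta,\theta}$ of the displayed parameter set---which is verbatim the set appearing in Theorem \ref{th:main-result}---onto $\text{\rm Irr}(G(O_r,\beta)\mid\psi_{\beta})$, and composing it with the bijection $\sigma\mapsto\text{\rm Ind}_{G(O_r,\beta)}^{G(O_r)}\sigma$ of 2) in subsection \ref{subsec:clifford-theory-in-general} produces the asserted bijection onto $\text{\rm Irr}(G(O_r)\mid\psi_{\beta})$. The step I expect to be the real obstacle is not the formal assembly but the two Lie-theoretic inputs peculiar to the orthogonal case---the connectedness of the centralizer (and of its center) of a semisimple element, and the non-degeneracy of $B$ on $\frak{g}_{\beta}(\Bbb F)$---each of which relies on the residue characteristic being good with respect to $G$; this is precisely where $GO(S)$ departs from the purely formal template shared with the $GL_n$ and $GSp_{2n}$ cases.
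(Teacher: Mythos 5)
Your proposal matches the paper's own treatment: the paper proves this proposition exactly by noting that smooth regularity over $F$ and $\Bbb F$ together with the connectedness of $Z_{G\otimes_OL}(\beta_s)$ and of its center gives condition 1) of Theorem \ref{th:main-result}, that semisimplicity of $\overline\beta$ gives $c_{\beta,\rho}\equiv 1$ via Remark \ref{remark:v-g-is-trivial-if-trace-form-is-non-deg-on-g-beta}, and then composing the bijection of Theorem \ref{th:main-result} with the Clifford-theoretic induction bijection. Your identification of the two genuinely $GO(S)$-specific inputs (connectedness of the centralizer and non-degeneracy of $B$ on $\frak{g}_{\beta}(\Bbb F)$) is precisely what the paper asserts without further proof in the paragraph preceding the statement.
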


Suppose $n=2m$ is even. Let $K$ be an unramified field extension of
$F$ of degree $n$ with the integer ring $O_K$. Then 
$\frak{p}_K=\varpi O_K$ is the maximal ideal of $O_K$, and 
$\Bbb F=O/\frak{p}$ is canonically identified with a subfield of 
$\Bbb K=O_K/\frak{p}_K$. Let $\tau\in\text{\rm Gal}(K/F)$ be the
unique element of order $2$. Fix an $\varepsilon\in O_K^{\times}$ such
that $\varepsilon^{\tau}=\varepsilon$ and put
$$
 S_{\varepsilon}(x,y)=T_{K/F}(\varepsilon xy^{\tau})
 \qquad
 (x,y\in K).
$$
Then $S$ is a regular $F$-quadratic form on $K$. 
Fix a $O$-basis $\{u_1,\cdots,u_n\}$  of $O_K$ and put 
$B=\left(u_i^{\sigma_j}\right)_{1\leq i,j\leq n}\in M_n(O_K)$ where 
$\text{\rm Gal}(K/F)=\{\sigma_1,\cdots,\sigma_n\}$. Then we have 
$$
 \left(S_{\varepsilon}(u_i,u_j)\right)_{1\leq i,j\leq n}
 =B\begin{bmatrix}
     \varepsilon^{\sigma_1}&      &                      \\
                           &\ddots&                      \\
                           &      &\varepsilon^{\sigma_n}
   \end{bmatrix}\,^tB^{\tau}
$$ 
so that the discriminant of the quadratic form $S_{\varepsilon}$ is
$$
 \det(S_{\varepsilon}(u_i,u_j))_{1\leq i,j\leq n}=
 (-1)^m(\det B)^2N_{K/F}(\varepsilon). 
$$
Note that $(\det B)^{\sigma}=-\det B$ for a generator $\sigma$ of 
$\text{\rm Gal}(K/F)$. Since $K/F$ is unramified extension,
its discriminant $D(K/F)=(\det B^2)$ is trivial, in other word 
$\det(S_{\varepsilon}(u_i,u_j))_{1\leq i,j\leq n}\in O^{\times}$. So
the $O$-group 
scheme $G=GO(S_{\varepsilon})$ and its Lie algebra
$\frak{g}=\frak{go}(S_{\varepsilon})$ is 
defined by 
$$
 G(L)=\left\{g\in GL_L(O_K{\otimes}L)\biggm|
                    \begin{array}{l}
                    S_{\varepsilon}(xg,yg)=\nu(g)S_{\varepsilon}(x,y)\\
                    \text{\rm for $\forall x,y\in O_K{\otimes}_OL$ with 
                              $\nu(g)\in L^{\times}$}
                    \end{array}\right\}
$$
and by 
$$
 \frak{g}(L)
  =\left\{X\in\text{\rm End}_L(O_K{\otimes}L)\biggm|
           \begin{array}{l}
            S_{\varepsilon}(xX,y)+S_{\varepsilon}(x,yX)
                       =\nu(X)S_{\varepsilon}(x,y)\\
            \text{\rm for $\forall x,y\in O_K{\otimes}_OL$ 
                               with $\nu(X)\in L$}
           \end{array}\right\}
$$
for all commutative $O$-algebra $L$. Note that $\text{\rm End}_F(K)$
acts on $K$ from the right side.

Take a $\beta\in O_K$ such that $O_K=O[\beta]$ and $\beta^{\tau}=-\beta$. 
Identify $\beta\in K$ with the element $x\mapsto x\beta$ of 
$\frak{g}(O)\subset\text{\rm End}_O(O_K)=\frak{gl}_{2n}(O)$. 
Then the characteristic polynomial $\chi_{\beta}(t)\in O[t]$ of 
$\beta\in\frak{gl}_{2n}(O)$ is irreducible modulo $\frak{p}$. We have 
$$
 G_{\beta}(O_r)=G(O_r)\cap\left(O_K/\frak{p}_K^r\right)^{\times}
               =\{\gamma\nnpmod{\frak{p}_K^r}\mid
                    \gamma\in U_{K/F}\}
$$
where
$$
 U_{K/F}=\{\gamma\in O_K^{\times}\mid 
            \gamma\cdot\gamma^{\tau}\in O^{\times}\}
$$
is a subgroup of $O_K^{\times}$. In this case we have 
$\psi_{\beta}(h)=\tau(\varpi^{-l^{\prime}}T_{K/F}(\beta x))$ for all 
$$
 h=1+\varpi^lx\npmod{\frak{p}_K^r}\in K_l(O_r)\cap G_{\beta}(O_r)
 \subset\left(O_K/\frak{p}_K^r\right)^{\times}.
$$
Then Proposition \ref{prop:regular-semisimple-case-for-go(s)} gives 

\begin{prop}
\label{prop:shintani-gerardin-type-parametrization-for-go(s)-of-even-n}
There exists a bijection 
$\theta\mapsto
 \text{\rm Ind}_{G(O_r,\beta)}^{G(O_r)}\sigma_{\beta,\theta}$ of the
 set 
$$
 \left\{\begin{array}{l}
        \theta:U_{K/F}\to\left(O_K/\frak{p}_K^r\right)^{\times}
               \to\Bbb C^{\times}:\text{\rm group homomorphism}\\
        \text{\rm such that 
  $\theta(\gamma)
   =\tau(\varpi^{-l^{\prime}}T_{K/F}(\varpi^{-l^{\prime}}\beta x))
   \;\;
   \forall\gamma=1+\varpi^lx\in U_{K/F}$}
         \end{array}\right\}
$$
onto $\text{\rm Irr}(G(O_r)\mid\psi_{\beta})$.
\end{prop}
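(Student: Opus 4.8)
The plan is to deduce the statement from Proposition \ref{prop:regular-semisimple-case-for-go(s)}, so the task splits into two parts: checking that the present $\beta$ meets the hypotheses of that proposition, and rewriting its abstract parameter set concretely in terms of $U_{K/F}$.

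First I would verify that $\overline\beta$ is semisimple and that $\beta$ is smoothly regular with respect to $G=GO(S_{\varepsilon})$ over both $F$ and $\Bbb F$. Semisimplicity is immediate: $K/F$ is unramified, hence separable, and $K=F[\beta]$, so the minimal polynomial of $\beta$ over $F$ is separable; likewise $\chi_{\beta}(t)\npmod{\frak p}$ is irreducible over the perfect field $\Bbb F$, so $\overline\beta\in\frak{g}(\Bbb F)$ is semisimple. For smooth regularity I would compute $\frak{g}_{\beta}$ directly. Since $K=F[\beta]$, the centralizer of multiplication by $\beta$ in $\text{\rm End}_F(K)$ is $K$ itself acting by multiplication, and multiplication by $\mu\in K$ lies in $\frak{go}(S_{\varepsilon})$ exactly when $S_{\varepsilon}(x\mu,y)+S_{\varepsilon}(x,y\mu)\in F\cdot S_{\varepsilon}(x,y)$ for all $x,y$, which unwinds to $\mu+\mu^{\tau}\in F$. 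Writing $K=K_{+}\oplus K_{-}$ for the eigenspaces $K_{\pm}=\{x\in K\mid x^{\tau}=\pm x\}$ of the involution $\tau$ and using that $F$ is non-dyadic, I obtain $\frak{g}_{\beta}=F\oplus K_{-}$, of dimension $1+m=m+1$ over $F$. As $\text{\rm rank}(G)=m+1$ for the even general orthogonal similitude group, this yields $\dim_F\frak{g}_{\beta}(F)=\text{\rm rank}(G)$, and the identical computation over $\Bbb F$ (using the corresponding unramified extension $\Bbb K/\Bbb F$) gives smooth regularity over $\Bbb F$ as well. This places $\beta$ in exactly the situation of Proposition \ref{prop:regular-semisimple-case-for-go(s)}: $G_{\beta}$ is a smooth commutative $O$-torus and, $\overline\beta$ being semisimple, $c_{\beta,\rho}$ is trivial by Remark \ref{remark:v-g-is-trivial-if-trace-form-is-non-deg-on-g-beta}.

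With the hypotheses in hand, I would carry out the identifications. Proposition \ref{prop:regular-semisimple-case-for-go(s)} supplies a bijection of the set of $\theta\in G_{\beta}(O_r)\sphat$ with $\theta=\psi_{\beta}$ on $G_{\beta}(O_r)\cap K_l(O_r)$ onto $\text{\rm Irr}(G(O_r)\mid\psi_{\beta})$. Using the identification $G_{\beta}(O_r)=\{\gamma\npmod{\frak{p}_K^r}\mid\gamma\in U_{K/F}\}$ recorded above, such a $\theta$ is the same datum as a homomorphism $U_{K/F}\to(O_K/\frak{p}_K^r)^{\times}\to\Bbb C^{\times}$, and the condition $\theta=\psi_{\beta}$ on $G_{\beta}(O_r)\cap K_l(O_r)$ becomes, via the explicit formula $\psi_{\beta}(1+\varpi^lx)=\tau(\varpi^{-l^{\prime}}T_{K/F}(\beta x))$ stated before the proposition, precisely the constraint displayed on $\gamma=1+\varpi^lx\in U_{K/F}$. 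Matching the two parameter sets and composing with Clifford induction $\sigma_{\beta,\theta}\mapsto\text{\rm Ind}_{G(O_r,\beta)}^{G(O_r)}\sigma_{\beta,\theta}$ then gives the asserted bijection.

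The main obstacle is the smooth-regularity verification, and in particular the dimension count $\dim_F\frak{g}_{\beta}=m+1$. One must correctly pin down the involution $\tau$ (the unique element of order two in the cyclic group $\text{\rm Gal}(K/F)$, whose fixed field has degree $m$ over $F$) and then confirm that the centralizer torus $U_{K/F}$ has dimension equal to the rank $m+1$ of the even general orthogonal group. Once this is settled, the remaining ingredients — semisimplicity, the structure of $G_{\beta}(O_r)$, and the translation of the character condition — are routine.
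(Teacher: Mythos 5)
Your proposal is correct and follows essentially the same route as the paper: the statement is obtained by specializing Proposition \ref{prop:regular-semisimple-case-for-go(s)} to this $\beta$, using the identification $G_{\beta}(O_r)=\{\gamma\npmod{\frak{p}_K^r}\mid\gamma\in U_{K/F}\}$ and the explicit formula for $\psi_{\beta}$ on $G_{\beta}(O_r)\cap K_l(O_r)$ to translate the parameter set. Your explicit verification of the hypotheses (semisimplicity of $\overline\beta$ from irreducibility of $\chi_{\beta}\npmod{\frak p}$, and smooth regularity via the computation $\frak{g}_{\beta}=F\oplus K_-$ of dimension $m+1=\text{\rm rank}(GO_{2m})$) is sound and merely makes explicit what the paper leaves implicit.
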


Let us consider the case of $n=2m+1$ being odd. In this case 
$\det\beta=0$ for all $\beta\in\frak{g}(O)$ so that the characteristic
polynomial $\chi_{\beta}(t)\in O[t]$ of $\beta$ is of the form 
$\chi_{\beta}(t)=t\cdot p(t)$ with $p(t)\in O[t]$. We will give
examples in which $p(t)$ is irreducible modulo $\frak{p}$. 

Let us use the notations used in the case of even $n$ and fix an 
$\eta\in O^{\times}$. Then $F$-quadratic form on $V=K\times F$ is
defined by 
$$
 S_{\varepsilon,\eta}((x,s),(y,t))
 =S_{\varepsilon}(x,y)+\eta\cdot st.
$$
Then the $O$-group scheme $G=GO(S_{\varepsilon,\eta})$ and its Lie
algebra $\frak{g}=\frak{go}(S_{\varepsilon,\eta})$ is defined by
$$
 G(L)=\left\{g\in GL_L((O_K\times O){\otimes}L)\biggm|
                    \begin{array}{l}
                    S_{\varepsilon,\eta}(ug,vg)
                         =\nu(g)S_{\varepsilon,\eta}(u,v)
                    \;\;\text{\rm for}\\
                    \text{\rm $\forall u,v\in(O_K\times O){\otimes}_OL$ with 
                              $\nu(g)\in L^{\times}$}
                    \end{array}\right\}
$$
and
\begin{align*}
 &\frak{g}(L)\\
 &=\left\{X\in\text{\rm End}_L((O_K\times O){\otimes}L)\biggm|
           \begin{array}{l}
            S_{\varepsilon,\eta}(uX,v)+S_{\varepsilon,\eta}(u,vX)
                       =\nu(X)S_{\varepsilon,\eta}(u,v)\\
            \text{\rm for $\forall u,v\in(O_K\times O){\otimes}_OL$ 
                               with $\nu(X)\in L$}
           \end{array}\right\}
\end{align*}
for all commutative $O$-algebra $L$. An element 
$X\in\text{\rm End}_L((O_K\times O){\otimes}_OL)$ is denoted by
$$
 X=\begin{bmatrix}
    A&B\\
    C&D
   \end{bmatrix}\;\text{\rm with}\;
 \begin{cases}
  A\in\text{\rm End}_L(O_K{\otimes}_OL),
    &B\in\text{\rm Hom}_L(O_K{\otimes}_OL,L),\\
  C\in\text{\rm Hom}_L(L,O_K{\otimes}_OL),
    &D\in\text{\rm End}_L(L)=L.
 \end{cases}
$$
Take a $\beta_0\in O_K^{\times}$ such that $O_K=O[\beta_0]$ and 
$\beta_0^{\tau}=-\beta_0$. Then the element 
$(x,s)\mapsto(x\beta_0,0)$ of $\frak{g}(O)$ is denoted by
$$
 \beta=\begin{bmatrix}
        \beta_0&0\\
           0   &0
       \end{bmatrix}\in\frak{g}(O)
      \subset\text{\rm End}_O(O_K\times O),
$$
where $\beta_0\in O_K^{\times}$ is identified with 
$x\mapsto x\beta_0$ an element of $\text{\rm End}_O(O_K)$.  Now the
characteristic polynomial of 
$\beta\in\frak{g}(O)\subset\text{\rm End}_O(O_K\times O)$ is 
$\chi_{\beta}(t)=t\cdot\chi_{\beta_0}(t)$ where 
$\chi_{\beta_0}(t)\in O[t]$ is the characteristic polynomial of 
$\beta_0\in\text{\rm End}_O(O_K)$ which is irreducible modulo
$\frak{p}$. In particular $\beta\npmod{\frak{p}}\in\frak{g}(\Bbb F)$
is semisimple. We have
\begin{align*}
 G_{\beta}(O_r)
 &=G(O_r)\cap O_r[\beta\npmod{\frak{p}^r}]^{\times}\\
 &=\left\{\begin{bmatrix}
           \gamma\npmod{\frak{p}_K^r}&0\\
           0&a\npmod{\frak{p}^r}
          \end{bmatrix}\biggm|
           \begin{array}{l}
            \gamma\in U_{K/F},\;a\in O^{\times}\\
            \gamma\gamma^{\tau}\equiv a^2\npmod{\frak{p}^r}
           \end{array}
          \right\}.
\end{align*}
In this case 
$\psi_{\beta}(h)=\tau\left(\varpi^{-l^{\prime}}T_{K/F}(\beta_0x)\right)$
for all
$$
 h=\begin{bmatrix}
    1+\varpi^lx\npmod{\frak{p}_K^r}&0\\
    0&1+\varpi^ly\npmod{\frak{p}^r}
   \end{bmatrix}
 \in K_l(O_r)\cap G_{\beta}(O_r).
$$
The Proposition \ref{prop:regular-semisimple-case-for-go(s)} gives

\begin{prop}
\label{prop:shintani-gerardin-type-parametrization-for-go(s)-of-odd-n}
There exists a bijection 
$\theta\mapsto
 \text{\rm Ind}_{G(O_r,\beta)}^{G(O_r)}\sigma_{\beta,\theta}$ of the
 set 
$$
 \left\{\begin{array}{l}
        \theta:U_{K/F}{\times}_{\frak{p}^r}O^{\times}
          \to\left(O_K/\frak{p}_K^r\right)^{\times}O_r^{\times}
               \to\Bbb C^{\times}\\
    \hphantom{\theta}:\text{\rm group homomorphism such that}\\
    \hphantom{theta:}\theta(\gamma,a)
   =\tau(\varpi^{-l^{\prime}}T_{K/F}(\varpi^{-l^{\prime}}\beta x))\\
    \hphantom{theta:}
      \text{\rm for $\forall(\gamma,a)=(1+\varpi^lx,1+\varpi^ly)
           \in U_{K/F}{\times}_{\frak{p}^r}O^{\times}$}
         \end{array}\right\}
$$
onto $\text{\rm Irr}(G(O_r)\mid\psi_{\beta})$. Here
$$
 U_{K/F}{\times}_{\frak{p}^r}O^{\times}
 =\{(\gamma,a)\in U_{K/F}\times O^{\times}\mid
     \gamma\gamma^{\tau}\equiv a^2\npmod{\frak{p}^r}\}
$$
is a subgroup of $U_{K/F}\times O^{\times}$.
\end{prop}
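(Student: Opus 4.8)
The plan is to obtain the statement as an explicit repackaging of Proposition \ref{prop:regular-semisimple-case-for-go(s)}, so the first task is to verify its hypotheses for the $\beta$ constructed here. Since $\chi_\beta(t)=t\cdot\chi_{\beta_0}(t)$ with $\chi_{\beta_0}(t)\npmod{\frak{p}}\in\Bbb F[t]$ irreducible of degree $2m$ and $\beta_0\in O_K^\times$, the roots of $\chi_{\overline\beta}(t)$ over the algebraic closure of $\Bbb F$ are $0$ together with the $2m$ Galois conjugates of $\overline\beta_0$; these $2m+1$ elements are pairwise distinct, because the conjugates of $\overline\beta_0$ are nonzero ($\overline\beta_0\in\Bbb K^\times$) and mutually distinct ($\chi_{\beta_0}\npmod{\frak{p}}$ is separable). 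Hence $\overline\beta$ is regular semisimple, its centralizer in $\frak{g}(\Bbb F)$ is a maximal torus, and by Proposition \ref{prop:regularity-in-terms-of-coefficient-of-root-vector} (in the semisimple case) $\beta$ is smoothly regular over $\Bbb F$; the identical eigenvalue argument over $F$ gives smooth regularity over $F$ as well. Therefore Proposition \ref{prop:regular-semisimple-case-for-go(s)} applies and furnishes a bijection of $\{\theta\in G_\beta(O_r)\sphat\mid\theta=\psi_\beta\text{ on }G_\beta(O_r)\cap K_l(O_r)\}$ onto $\text{\rm Irr}(G(O_r)\mid\psi_\beta)$. All that remains is to identify this abstract parameter set with the concrete set in the statement.

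Next I would make the reduction map explicit. The block description $G_\beta(O_r)=\{\mathrm{diag}(\gamma,a)\}$ recorded in the text follows from Remark \ref{remark:smoothly-regular-in-gl-n} (the centralizer of $\beta$ in $M_n(O_r)$ is $O_r[\beta]$) together with the coprimality of $\chi_{\beta_0}\npmod{\frak{p}}$ and $t$, which by the Chinese remainder theorem splits $O_r[\beta]$ as $(O_K/\frak{p}_K^r)\times O_r$, the similitude relations then cutting out precisely $\gamma\gamma^\tau\equiv a^2\npmod{\frak{p}^r}$. I would then consider the homomorphism $U_{K/F}{\times}_{\frak{p}^r}O^\times\to G_\beta(O_r)$ sending $(\gamma,a)$ to $\mathrm{diag}(\gamma\npmod{\frak{p}_K^r},a\npmod{\frak{p}^r})$. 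Because $G_\beta$ is smooth over $O$, the reduction $G_\beta(O)=U_{K/F}\to G_\beta(O_r)$ is surjective by Hensel's lemma, and lifting the $O$-component separately makes the displayed map surjective; hence pullback identifies $G_\beta(O_r)\sphat$ with exactly those homomorphisms $\theta:U_{K/F}{\times}_{\frak{p}^r}O^\times\to\Bbb C^\times$ that factor through $(O_K/\frak{p}_K^r)^\times O_r^\times$, which is the class of $\theta$ appearing in the statement.

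Finally I would transport the normalization $\theta=\psi_\beta$ on $G_\beta(O_r)\cap K_l(O_r)$ across this identification. The intersection $G_\beta(O_r)\cap K_l(O_r)$ is the image of the pairs $(\gamma,a)=(1+\varpi^l x,\,1+\varpi^l y)$, and on such an element the computation preceding the statement gives $\psi_\beta=\tau(\varpi^{-l^{\prime}}T_{K/F}(\beta_0 x))$; substituting this shows that the character condition becomes exactly the displayed requirement on $U_{K/F}{\times}_{\frak{p}^r}O^\times$, so that composing the two bijections yields $\theta\mapsto\text{\rm Ind}_{G(O_r,\beta)}^{G(O_r)}\sigma_{\beta,\theta}$ as claimed. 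The one point that needs genuine care, rather than mere bookkeeping, is the surjectivity of the reduction map with the correct kernel: a given $\overline\gamma\in(O_K/\frak{p}_K^r)^\times$ must be lifted not merely to $O_K^\times$ but to an element of $U_{K/F}$ with $\gamma\gamma^\tau\in O^\times$ — which is exactly what smoothness of $G_\beta$ guarantees — and the constraints $\gamma\gamma^\tau\equiv a^2\npmod{\frak{p}^r}$ must be matched on both sides so that no parameter is lost or double-counted.
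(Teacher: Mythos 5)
Your proposal is correct and follows essentially the same route as the paper: the surrounding text establishes that $\beta=\mathrm{diag}(\beta_0,0)$ is regular semisimple, identifies $G_{\beta}(O_r)$ via $O_r[\beta]\cong(O_K/\frak{p}_K^r)\times O_r$ and the similitude condition $\gamma\gamma^{\tau}\equiv a^2\npmod{\frak{p}^r}$, computes $\psi_{\beta}$ on $G_{\beta}(O_r)\cap K_l(O_r)$, and then invokes Proposition \ref{prop:regular-semisimple-case-for-go(s)} exactly as you do. Your write-up merely makes explicit the verification of smooth regularity and the surjectivity of $U_{K/F}{\times}_{\frak{p}^r}O^{\times}\to G_{\beta}(O_r)$ (via smoothness of $G_{\beta}$), which the paper leaves implicit.
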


\subsection[]{}\label{sebsec:unitary-group}
Let $K/F$ be the unramified quadratic field extension and put 
$\text{\rm Gal}(K/F)=\langle\tau\rangle$. Let us denote by $O_K$ the
integer ring of $K$ and $\frak{p}_K=\varpi O_K$ the maximal ideal of
$O_K$. Fix a $S\in M_n(O_K)$ such that $^tS^{\tau}=S$ and 
$\det S\in O_K^{\times}$. Then the unitary group $G=U(S)$ associated
with the Hermitian form $S$ is the $O$-group scheme defined by
$$
 G(A)=\{g\in GL_n(O_K{\otimes}_O A)\mid gS\,^t\!g^{\tau\otimes 1}=S\}
$$
for all commutative $O$-algebra $A$. Its Lie algebra
$\frak{g}=\frak{u}(S)$ is an affine $O$-scheme defined by 
$$
 \frak{g}(A)=\{X\in\frak{gl}_n(O_K{\otimes}_O A)\mid 
                XS+S\,^tX^{\tau\otimes 1}=0\}.
$$
We can take an $\varepsilon\in O^{\times}$ such that
$O_K=O[\sqrt\varepsilon]$. If there exists an $\eta\in A$ such that
$\varepsilon 1_A=\eta^2$, then we can identify $O_K{\otimes}_O A$ with 
$A\oplus A$ such a way that $(x,y)^{\tau\otimes 1}=(y,x)$. Then 
we have identifications $G(A)=GL_n(A)$ and
$\frak{g}(A)=\frak{gl}_n(A)$. If we put 
$$
 K=\left\{\begin{bmatrix}
            x&\varepsilon y\\
            y&x
           \end{bmatrix}\biggm| x,y\in F\right\},
 \quad
 O_K=\left\{\begin{bmatrix}
            x&\varepsilon y\\
            y&x
           \end{bmatrix}\biggm| x,y\in O\right\}
$$
then
$$
 M_n(O_K)=\{X\in M_{2n}(O_F)\mid I_n(\varepsilon)X=XI_n(\varepsilon)\}
$$
where
$$
 I_n(\varepsilon)=\begin{bmatrix}
                   I(\varepsilon)&      &              \\
                                 &\ddots&              \\
                                 &      &I(\varepsilon)
                  \end{bmatrix}\in M_{2n}(O_F)
 \;\text{\rm with $I(\varepsilon)=\begin{bmatrix}
                                   0&\varepsilon\\
                                   1&0
                                  \end{bmatrix}$}.
$$
Then we have 
$$
 G(A)=\{g\in GL_{2n}(A)\mid I_n(\varepsilon)g=gI_n(\varepsilon), 
                            gS^t\!g^{\tau}=S\}
$$
and
$$
 \frak{g}(A)=\{X\in\frak{gl}_{2n}(A)\mid 
                I_n(\varepsilon)X=XI_n(\varepsilon),\;
                XS+S^t\!X^{\tau}=0\}
$$
for all commutative $O$-algebra $A$. In this way $G$ is a closed
$O$-group subscheme of $GL_{2n}$. Then $G$ satisfies the conditions
I), II) and III) of the subsection \ref{subsec:fundamental-setting}. 
The center of $\frak{g}(O)$ is
$$
 Z(\frak{g}(O))=\{\lambda\cdot 1_n\in GL_n(O_K)\mid
                   \lambda\in O_K\;
                   \text{\rm s.t. $\lambda^{\tau}+\lambda=0$}\}.
$$
The similar arguments as Propositions 
\ref{prop:congruent-beta-for-gl-n} and \ref{prop:central-beta-for-gl-n} 
show

\begin{prop}\label{prop:congruent-beta-for-u(s)}
If 
$\beta\equiv\lambda\cdot 1_n+\beta_0\npmod{\frak{p}^{l^{\prime}}}$ 
for $\beta, \beta_0\in\frak{g}(O)$ with $\lambda\in O_K$, then there
  exists a group homomorphism 
$\mu:O_{K,r}^{\times}\to\Bbb C^{\times}$ such that 
$$
 \pi\mapsto(\mu\circ\det)\otimes\pi
$$
gives a bijection of $\text{\rm Irr}(G(O_r)\mid\psi_{\beta_0})$ onto 
$\text{\rm Irr}(G(O_r)\mid\psi_{\beta})$.
\end{prop}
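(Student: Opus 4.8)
The plan is to transcribe the argument of Proposition \ref{prop:congruent-beta-for-gl-n} into the unitary setting, the only genuinely new ingredient being the translation between the $O_K$-module structure of $G$ and its realization inside $\frak{gl}_{2n}(O)$ on which the trace form $B$ is defined. First I would record the two compatibility identities forced by the fact that $O_K$ is free of rank $2$ over $O$: for $X\in\frak{g}(O)\subset\frak{gl}_n(O_K)$ one has
$$
 \det(1_n+\varpi^lX)\equiv 1+\varpi^l\,\text{\rm tr}_{O_K}(X)\npmod{\frak{p}_K^r},
$$
where $\det$ and $\text{\rm tr}_{O_K}$ are taken over $O_K$ and the higher terms drop because $2l\geq r$; and, for the ambient trace form, $B(X,Y)=T_{K/F}\left(\text{\rm tr}_{O_K}(XY)\right)$, since an $O_K$-linear endomorphism of a free $O_K$-module has $O$-trace equal to the $K/F$-trace of its $O_K$-trace. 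These are the two places where the $GL_n$ proof must be reinterpreted.

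Next, using $\beta-\beta_0\equiv\lambda\cdot 1_n\npmod{\frak{p}^{l^{\prime}}}$, I would compute, for $h=(1_n+\varpi^lX)\npmod{\frak{p}^r}\in K_l(O_r)$,
$$
 \frac{\psi_{\beta}(h)}{\psi_{\beta_0}(h)}
 =\tau\left(\varpi^{-l^{\prime}}B(X,\beta-\beta_0)\right)
 =\tau\left(\varpi^{-l^{\prime}}T_{K/F}\left(\lambda\,\text{\rm tr}_{O_K}(X)\right)\right),
$$
the last equality because $\text{\rm tr}_{O_K}(X(\beta-\beta_0))\equiv\lambda\,\text{\rm tr}_{O_K}(X)\npmod{\frak{p}_K^{l^{\prime}}}$ and $T_{K/F}(\frak{p}_K^{l^{\prime}})\subset\frak{p}^{l^{\prime}}$. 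This dictates the construction of $\mu$: take a group homomorphism $\mu:O_{K,r}^{\times}\to\Bbb C^{\times}$ with $\mu\left((1+\varpi^lt)\npmod{\frak{p}_K^r}\right)=\tau\left(\varpi^{-l^{\prime}}T_{K/F}(\lambda t)\right)$ for all $t\in O_K$. One checks this prescription is well defined and multiplicative on the subgroup $1+\varpi^lO_K\subset O_{K,r}^{\times}$ (additivity of $t\mapsto T_{K/F}(\lambda t)$ together with $2l\geq r$ for multiplicativity, and $t\equiv t'\npmod{\frak{p}_K^{l^{\prime}}}\Rightarrow\varpi^{-l^{\prime}}T_{K/F}(\lambda(t-t'))\in O$ for well-definedness), and since $O_{K,r}^{\times}$ is a finite abelian group the character extends to all of it.

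Combining the two displays gives $\psi_{\beta}=(\mu\circ\det)\cdot\psi_{\beta_0}$ on $K_l(O_r)$, where $\det:G(O_r)\to O_{K,r}^{\times}$ is the determinant of the $n\times n$ matrix over $O_K$, so that $\mu\circ\det$ is a one-dimensional character of $G(O_r)$. Then $\pi\mapsto(\mu\circ\det)\otimes\pi$ is a self-bijection of $\text{\rm Irr}(G(O_r))$, with inverse the twist by $(\mu\circ\det)^{-1}$; and since
$\langle\psi_{\beta},(\mu\circ\det)\otimes\pi\rangle_{K_l(O_r)}
 =\langle(\mu\circ\det)^{-1}\cdot\psi_{\beta},\pi\rangle_{K_l(O_r)}
 =\langle\psi_{\beta_0},\pi\rangle_{K_l(O_r)}$,
it carries $\text{\rm Irr}(G(O_r)\mid\psi_{\beta_0})$ bijectively onto $\text{\rm Irr}(G(O_r)\mid\psi_{\beta})$, which is the assertion.

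I expect the only real obstacle to lie in the first paragraph's bookkeeping: one must be careful that the $\det$ in the statement is the $O_K$-determinant (so that $\mu\circ\det$ is meaningful with $\mu$ defined on $O_{K,r}^{\times}$), and that the trace form $B$ inherited from the ambient $\frak{gl}_{2n}(O)$ genuinely factors as $T_{K/F}\circ\text{\rm tr}_{O_K}$. Once these identifications are fixed the computation is formally identical to the $GL_n$ case, and the relation $\lambda^{\tau}+\lambda=0$ (which holds because $\lambda\cdot 1_n$ is forced into the center of $\frak{g}$) is not actually needed.
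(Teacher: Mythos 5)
Your proposal is correct and follows the same route the paper intends: the paper gives no separate proof here, merely asserting that ``the similar arguments as Propositions \ref{prop:congruent-beta-for-gl-n} and \ref{prop:central-beta-for-gl-n} show'' the result, and your write-up is exactly that adaptation, with the two bookkeeping identities $\det(1_n+\varpi^lX)\equiv 1+\varpi^l\,\text{\rm tr}_{O_K}(X)\pmod{\frak{p}_K^r}$ and $B=T_{K/F}\circ\text{\rm tr}_{O_K}$ correctly identified as the only points where the $GL_n$ computation needs reinterpretation. Your closing remarks (that $\det$ must be the $O_K$-determinant so that $\mu\circ\det$ makes sense on $O_{K,r}^{\times}$, and that $\lambda^{\tau}+\lambda\equiv 0$ is forced but not used) are both accurate.
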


and

\begin{prop}\label{prop:central-beta-for-u(s)}
If $\beta\in\frak{g}(O)$ is central modulo $\frak{p}$, then there
exists a group homomorphism $\mu:O_{K,r}^{\times}\to\Bbb C^{\times}$ such
that, for any 
$\pi\in\text{\rm Irr}(G(O_r)\mid\psi_{\beta})$, there exists a 
$\sigma\in\text{\rm Irr}(G(O_{r-1}))$ such that 
$\pi=(\mu\circ\det)\otimes(\sigma\circ\text{\rm proj})$ where 
$\text{\rm proj}:G(O_r)\to G(O_{r-1})$ is the canonical surjection.
\end{prop}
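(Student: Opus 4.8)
The plan is to reproduce the proof of Proposition~\ref{prop:central-beta-for-gl-n} verbatim in structure, the only genuinely new ingredient being the translation between the $O_F$-structure that defines the trace form $B$ and the $O_K$-structure carried by $\frak{u}(S)$. Using the description of the center $Z(\frak{g}(O))$ above, write $\beta\equiv\lambda\cdot 1_n\npmod{\frak p}$ with $\lambda\in O_K$ such that $\lambda^{\tau}+\lambda=0$. Because $G$ is realized inside $GL_{2n}$ through the $O_K$-module structure on $O_K^n\cong O_F^{2n}$, the form $B$ is the $O_F$-trace form, so for $X,Y\in\frak{g}(O)\subset M_n(O_K)$ one has $B(X,Y)=\text{\rm tr}(XY)=T_{K/F}\left(\text{\rm tr}_{O_K}(XY)\right)$, where $\text{\rm tr}_{O_K}$ denotes the trace in $M_n(O_K)$. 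In particular $B(X,\lambda\cdot 1_n)=T_{K/F}\left(\lambda\cdot\text{\rm tr}_{O_K}(X)\right)$.

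First I would construct $\mu$. On the subgroup of $O_{K,r}^{\times}$ consisting of the elements $1+\varpi^l z$ ($z\in O_K$), define $1+\varpi^l z\mapsto\tau\left(\varpi^{-l^{\prime}}T_{K/F}(\lambda z)\right)$; this is additive in $z$ and factors through $z\npmod{\frak{p}_K^{l^{\prime}}}$, hence is a well-defined character of that subgroup, and it extends to a homomorphism $\mu:O_{K,r}^{\times}\to\Bbb C^{\times}$ since $O_{K,r}^{\times}$ is finite abelian and $\Bbb C^{\times}$ is divisible. Then $\mu\circ\det$ is a character of $G(O_r)$: the $O_K$-valued determinant gives a homomorphism $\det:G(O_r)\to O_{K,r}^{\times}$, and $\det(1+\varpi^lX)\equiv 1+\varpi^l\text{\rm tr}_{O_K}(X)\npmod{\frak{p}_K^r}$ because the remaining elementary symmetric terms carry a factor $\varpi^{2l}$ and $2l\geq r$.

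The heart of the argument is that $\mu\circ\det$ and $\psi_{\beta}$ agree on $K_{r-1}(O_r)$. For $h=1+\varpi^{r-1}X\npmod{\frak{p}^r}$ with $X\in\frak{g}(O)$, viewing $h\in K_l(O_r)$ through $h=1+\varpi^l\left(\varpi^{l^{\prime}-1}X\right)$ gives $\psi_{\beta}(h)=\tau\left(\varpi^{-1}B(X,\beta)\right)$, and since this value depends only on $\overline\beta$ and $\beta\equiv\lambda\cdot 1_n\npmod{\frak p}$, we obtain $\psi_{\beta}(h)=\tau\left(\varpi^{-1}T_{K/F}(\lambda\cdot\text{\rm tr}_{O_K}(X))\right)$; on the other hand $\mu\circ\det(h)=\mu\left(1+\varpi^{r-1}\text{\rm tr}_{O_K}(X)\right)$ equals the same expression by the defining formula for $\mu$ together with $r-1=l+(l^{\prime}-1)$. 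The centrality of $\overline\beta$ also shows that $\psi_{\beta}|_{K_{r-1}(O_r)}$ is $G(O_r)$-invariant, since conjugation by $g_r$ sends it to $\psi_{\text{\rm Ad}(g)\beta}|_{K_{r-1}(O_r)}$ and this restriction depends only on $\text{\rm Ad}(\overline g)\overline\beta=\overline\beta$.

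To conclude, take $\pi\in\text{\rm Irr}(G(O_r)\mid\psi_{\beta})$. As $\pi$ contains $\psi_{\beta}$ on $K_l(O_r)\supset K_{r-1}(O_r)$ and $\psi_{\beta}|_{K_{r-1}(O_r)}$ is a single $G(O_r)$-invariant character of the abelian normal subgroup $K_{r-1}(O_r)$, Clifford theory forces $\pi|_{K_{r-1}(O_r)}$ to be a multiple of $\psi_{\beta}|_{K_{r-1}(O_r)}$. Hence $(\mu\circ\det)^{-1}\otimes\pi$ is trivial on $K_{r-1}(O_r)=\text{\rm Ker}\left(G(O_r)\to G(O_{r-1})\right)$, so it is of the form $\sigma\circ\text{\rm proj}$ with $\sigma\in\text{\rm Irr}(G(O_{r-1}))$, giving $\pi=(\mu\circ\det)\otimes(\sigma\circ\text{\rm proj})$. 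I expect the only delicate point to be the two identifications in the first two paragraphs, namely $B=T_{K/F}\circ\text{\rm tr}_{O_K}$ and the reduction of $\det(1+\varpi^lX)$; once these are in place the remainder is formal and identical to the $GL_n$ case.
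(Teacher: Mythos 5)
Your proof is correct and follows the same route the paper intends: the paper gives no separate proof here, saying only that the argument of Proposition \ref{prop:central-beta-for-gl-n} carries over, and your adaptation (the $O_K$-valued determinant, the identity $B=T_{K/F}\circ\text{\rm tr}_{O_K}$ on $M_n(O_K)\subset M_{2n}(O)$, and the agreement of $\mu\circ\det$ with $\psi_{\beta}$ on $K_{r-1}(O_r)$) is exactly that translation. Your Clifford-theory justification that $\pi|_{K_{r-1}(O_r)}$ is a multiple of $\psi_{\beta}$ is in fact more explicit than what the paper records for the $GL_n$ case.
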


If $\beta_s\in\frak{g}(L)$ ($L=F$ or $L=\Bbb F$) is semisimple, then the
centralizer $Z_{G{\otimes}_OL}(\beta_s)$ is connected and its center is
also connected. Hence if $\beta\in\frak{g}(O)$ is smoothly regular
over $F$ and over $\Bbb F$, then $G_{\beta}$ is a smooth commutative
$O$-group scheme. If further $\overline\beta\in\frak{g}(\Bbb F)$ is
semisimple, then $c_{\beta,\rho}(g,h)=1$ for all 
$g,h\in G_{\beta}(\Bbb F)^{(c)}=G_{\beta}(\Bbb F)$ and all 
$\rho\in\frak{g}_{\beta}(\Bbb F)\sphat$ (see Remark 
\ref{remark:v-g-is-trivial-if-trace-form-is-non-deg-on-g-beta}). 
In this case Theorem \ref{th:main-result} gives

\begin{prop}\label{prop:regular-semisimple-case-for-u(s)}
There exists a bijection 
$\theta\mapsto
 \text{\rm Ind}_{G(O_r,\beta)}^{G(O_r)}\sigma_{\beta,\theta}$ of the
 set 
$$
 \left\{\theta\in G_{\beta}(O_r)\sphat\;\;\;
         \text{\rm s.t. $\theta=\psi_{\beta}$ on 
               $G_{\beta}(O_r)\cap K_l(O_r)$}\right\}
$$
onto $\text{\rm Irr}(G(O_r)\mid\psi_{\beta})$.
\end{prop}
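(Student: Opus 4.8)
The plan is to recognize this proposition as a direct specialization of Theorem \ref{th:main-result} combined with Clifford theory, so that the genuine work is confined to checking that the standing hypotheses of the preceding paragraph (namely that $\beta$ is smoothly regular with respect to $G$ over both $F$ and $\Bbb F$, with $\overline\beta\in\frak{g}(\Bbb F)$ semisimple) place us in the situation covered by that theorem. First I would verify the two conditions of Theorem \ref{th:main-result}. The first, that $G_{\beta}$ be commutative and smooth over $O$, holds by Theorem \ref{th:sufficient-condition-for-smooth-commutativeness-of-g-beta}: since $\beta$ is smoothly regular over $F$ and over $\Bbb F$, and since for the unitary group $U(S)$ the centralizer $Z_{G\otimes_O L}(\beta_s)$ of the semisimple part is connected with connected center, $\beta$ is both smoothly regular and connected with respect to $G$. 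The second condition, triviality of the Schur multiplier $[c_{\beta,\rho}]$ for all $\rho\in\frak{g}_{\beta}(\Bbb F)\sphat$, follows from the semisimplicity of $\overline\beta$: semisimplicity forces the trace form $B$ to restrict non-degenerately to $\frak{g}_{\beta}(\Bbb F)$, so Remark \ref{remark:v-g-is-trivial-if-trace-form-is-non-deg-on-g-beta} lets me choose a linear section with image in $\frak{g}_{\beta}(\Bbb F)^{\perp}$, whence $v_g=0$ and $c_{\beta,\rho}(g,h)=1$ identically. Commutativity of $G_{\beta}$ moreover gives $G_{\beta}(\Bbb F)^{(c)}=G_{\beta}(\Bbb F)$, so this is precisely the triviality the theorem demands.

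With both hypotheses in hand, Theorem \ref{th:main-result} furnishes a bijection $\theta\mapsto\sigma_{\beta,\theta}$ from the displayed set of characters of $G_{\beta}(O_r)$ extending $\psi_{\beta}$ on $G_{\beta}(O_r)\cap K_l(O_r)$ onto $\text{\rm Irr}(G(O_r,\beta)\mid\psi_{\beta})$. The final step is to postcompose with the second bijection of the Clifford theory recorded in subsection \ref{subsec:clifford-theory-in-general}, namely $\sigma\mapsto\text{\rm Ind}_{G(O_r,\beta)}^{G(O_r)}\sigma$ mapping $\text{\rm Irr}(G(O_r,\beta)\mid\psi_{\beta})$ bijectively onto $\text{\rm Irr}(G(O_r)\mid\psi_{\beta})$. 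The composite $\theta\mapsto\text{\rm Ind}_{G(O_r,\beta)}^{G(O_r)}\sigma_{\beta,\theta}$ is then exactly the map asserted in the statement.

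The only nonformal point is the non-degeneracy of $B$ on $\frak{g}_{\beta}(\Bbb F)$ in the semisimple case; everything else is a bookkeeping composition of bijections already established. I expect this step to be routine here, since for semisimple $\overline\beta$ the centralizer is reductive and the invariant trace form restricts non-degenerately to its Lie algebra, which is the decomposition $\frak{g}(\Bbb F)=\frak{g}_{\beta}(\Bbb F)^{\perp}\oplus\frak{g}_{\beta}(\Bbb F)$ used in Remark \ref{remark:v-g-is-trivial-if-trace-form-is-non-deg-on-g-beta}. This matches the situation handled identically in Propositions \ref{prop:regular-semisimple-case-for-gl-n}, \ref{prop:regular-semisimple-case-for-gsp-2n}, and \ref{prop:regular-semisimple-case-for-go(s)}, so the unitary case presents no additional obstacle beyond the structural facts about $U(S)$ already assembled above.
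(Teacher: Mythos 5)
Your proposal is correct and follows essentially the same route as the paper: the paper's justification is exactly the paragraph preceding the proposition, which verifies the two hypotheses of Theorem \ref{th:main-result} (commutativity and smoothness of $G_{\beta}$ from smooth regularity plus connectedness of the centralizer of the semisimple part, and triviality of $c_{\beta,\rho}$ from semisimplicity of $\overline\beta$ via Remark \ref{remark:v-g-is-trivial-if-trace-form-is-non-deg-on-g-beta}) and then composes with the Clifford-theoretic induction bijection of subsection \ref{subsec:clifford-theory-in-general}. Your explicit flagging of the non-degeneracy of $B$ on $\frak{g}_{\beta}(\Bbb F)$ as the one substantive input is a fair reading of what the paper leaves implicit.
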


Assume that $n$ is odd. Let $L/F$ be the unramified field extension of
degree $2n$ so that $K$ is a subfield of $L$. There exists a 
$\tau\in\text{\rm Gal}(L/F)$ of order $2$ and 
$\text{\rm Gal}(K/F)=\langle\tau|_K\rangle$. Fix an 
$\varepsilon\in O_L^{\times}$ such that
$\varepsilon^{\tau}=\varepsilon$. Then 
$$
 S_{\varepsilon}(x,y)=T_{L/K}(\varepsilon xy^{\tau})
 \quad
 (x,y\in L)
$$
is a non-degenerate Hermitian form over $K$ on $L$. Let 
$\{u_1,\cdots,u_n\}$ be a $O_K$-basis of $O_L$. Then we have
$$
 (S_{\varepsilon}(u_i,u_j))_{1\leq i,j\leq n}
 =B\begin{bmatrix}
    \varepsilon^{\sigma_1}&      &                      \\
                          &\ddots&                      \\
                          &      &\varepsilon^{\sigma_n}
   \end{bmatrix}^t\!B^{\tau}
$$
where $B=(u_i^{\sigma_k})_{1\leq i,k\leq n}\in M_n(O_L)$ with 
$\text{\rm Gal}(L/K)=\{\sigma_1,\cdots,\sigma_n\}$. So the
discriminant of the Hermitian form $S_{\varepsilon}$ is 
$$
 \det(S_{\varepsilon}(u_i,u_j))_{1\leq i,j\leq n}
 =N_{L/K}(\varepsilon)\det B(\det B)^{\tau}\in O_K^{\times}
$$
because $L/K$ is unramified. Then the $O$-group scheme 
$G=U(S_{\varepsilon})$ and its Lie algebra
$\frak{g}=\frak{u}(S_{\varepsilon})$ are defined by
$$
 G(A)=\{g\in GL_A(O_L{\otimes}_OA)\mid 
          S_{\varepsilon}(gx,gy)=S_{\varepsilon}(x,y)\;
          \text{\rm for $\forall x,y\in L$}\}
$$
and
$$
 \frak{g}(A)=\{X\in\text{\rm End}_A(O_L{\otimes}_OA)\mid
                S_{\varepsilon}(Xx,y)+S_{\varepsilon}(x,Xy)=0\;
                \text{\rm for $\forall x,y\in L$}\}
$$
for all commutative $O$-algebra $A$. 

Take a $\beta\in O_L$ such that $\beta^{\tau}+\beta=0$ and 
$O_L=O[\beta]$. Identify $\beta\in L$ with the element 
$x\mapsto x\beta$ of 
$\frak{g}(O)\subset\text{\rm End}_O(O_L)=\frak{gl}_{2n}(O)$. Then the
characteristic polynomial $\chi_{\beta}(t)\in O[t]$ is irreducible
modulo $\frak{p}$. We have
$$
 G_{\beta}(O_r)=G(O_r)\cap\left(O_L/\frak{p}_L^r\right)^{\times}
  =\{\gamma\nnpmod{\frak{p}_L^r}\mid\gamma\in U_{L/F}\}
$$
where
$$
 U_{L/F}
 =\{\gamma\in O_L^{\times}\mid\gamma\gamma^{\tau}\in O^{\times}\}.
$$
In this case 
$\psi_{\beta}(h)=\tau(\varpi^{-l^{\prime}}T_{L/F}(\beta x))$ for all
$$
 h=1+\varpi^lx\npmod{\frak{p}_L^r}\in K_l(O_r)\cap G_{\beta}(O_r)
  \subset\left(O_L/\frak{p}_L^r\right)^{\times}.
$$
Then Proposition \ref{prop:regular-semisimple-case-for-u(s)} gives 

\begin{prop}\label{prop:shintani-gerardin-type-parametrization-for-u(s)}
There exists a bijection 
$\theta\mapsto
 \text{\rm Ind}_{G(O_r,\beta)}^{G(O_r)}\sigma_{\beta,\theta}$ of the
 set 
$$
 \left\{\begin{array}{l}
        \theta:U_{L/F}\to\left(O_L/\frak{p}_L^r\right)^{\times}
               \to\Bbb C^{\times}:\text{\rm group homomorphism}\\
        \text{\rm such that 
  $\theta(\gamma)
   =\tau(\varpi^{-l^{\prime}}T_{L/F}(\varpi^{-l^{\prime}}\beta x))
   \;\;
   \forall\gamma=1+\varpi^lx\in U_{K/F}$}
         \end{array}\right\}
$$
onto $\text{\rm Irr}(G(O_r)\mid\psi_{\beta})$.
\end{prop}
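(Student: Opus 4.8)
The plan is to obtain this proposition as the concrete form of Proposition \ref{prop:regular-semisimple-case-for-u(s)}: I would first check that the chosen $\beta$ satisfies the hypotheses under which that proposition is valid (smooth regularity over $F$ and over $\Bbb F$, together with semisimplicity of $\overline\beta$), and then transcribe the abstract parameter set $\{\theta\in G_\beta(O_r)\sphat\mid \theta=\psi_\beta\text{ on }G_\beta(O_r)\cap K_l(O_r)\}$ into the explicit shape stated, using the identifications already recorded in the paragraphs immediately preceding the proposition. Almost all of the geometric content is set up there; what is left is the regularity input and the dictionary between characters of $G_\beta(O_r)$ and of $U_{L/F}$.

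First I would verify the hypotheses. Since $\chi_\beta(t)\in O[t]$ is irreducible modulo $\frak p$, its reduction $\overline{\chi_\beta}(t)\in\Bbb F[t]$ is irreducible of degree $2n$ over the perfect field $\Bbb F$, hence separable; it is therefore the minimal polynomial of $\overline\beta\in M_{2n}(\Bbb F)$, so $\overline\beta$ is regular semisimple. By Remark \ref{remark:smoothly-regular-in-gl-n} this makes $\beta$ smoothly regular with respect to $GL_{2n}$ over $\Bbb F$, and by the equivalence for unitary groups recorded in subsection \ref{subsec:smoothly-regular-in-reductive-group} (a consequence of Proposition \ref{prop:regularity-in-terms-of-coefficient-of-root-vector}) it is then smoothly regular with respect to $G=U(S_\varepsilon)$ over $\Bbb F$. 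Irreducibility of $\chi_\beta$ over $F$ gives likewise that $\beta$ is regular semisimple, hence smoothly regular, over $F$. Consequently $G_\beta$ is smooth commutative over $O$, and since $\overline\beta$ is semisimple the computation recorded before Proposition \ref{prop:regular-semisimple-case-for-u(s)} (via Remark \ref{remark:v-g-is-trivial-if-trace-form-is-non-deg-on-g-beta}) gives $c_{\beta,\rho}\equiv 1$; thus both hypotheses of Theorem \ref{th:main-result}, and hence the conditions for Proposition \ref{prop:regular-semisimple-case-for-u(s)}, hold.

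Next I would carry out the transcription. The preceding discussion identifies $G_\beta(O_r)$ with the image $\{\gamma\npmod{\frak p_L^r}\mid\gamma\in U_{L/F}\}$ of $U_{L/F}$ in $(O_L/\frak p_L^r)^\times$; since this reduction map $U_{L/F}\to G_\beta(O_r)$ is surjective with kernel $U_{L/F}\cap(1+\frak p_L^r)$, pulling back characters yields a bijection between $G_\beta(O_r)\sphat$ and the homomorphisms $\theta:U_{L/F}\to\Bbb C^\times$ that factor through $(O_L/\frak p_L^r)^\times$. Under this identification the constraint ``$\theta=\psi_\beta$ on $G_\beta(O_r)\cap K_l(O_r)$'' becomes $\theta(1+\varpi^l x)=\psi_\beta(1+\varpi^l x\npmod{\frak p_L^r})$ for those $x\in O_L$ with $1+\varpi^l x\in U_{L/F}$. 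Finally, because $\beta$ acts on $O_L$ by multiplication and the matrix trace of multiplication by $c\in L$ is the field trace $T_{L/F}(c)$, the trace form satisfies $B(X,\beta)=T_{L/F}(x\beta)$ when $X$ is multiplication by $x$, whence $\psi_\beta(1+\varpi^l x\npmod{\frak p_L^r})=\tau(\varpi^{-l^{\prime}}T_{L/F}(\beta x))$; substituting this recovers exactly the parameter set in the statement, and Proposition \ref{prop:regular-semisimple-case-for-u(s)} supplies the bijection onto $\text{\rm Irr}(G(O_r)\mid\psi_\beta)$.

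The only genuinely non-formal step is the regularity verification in the second paragraph: smooth regularity is an assertion about the unitary group $G$, not about the ambient $GL_{2n}$, so it cannot be read off from irreducibility of $\chi_\beta$ alone and one must invoke the equivalence, valid for unitary groups, between smooth regularity with respect to $G$ and with respect to $GL_{2n}$. I expect the remaining points, namely the surjectivity of $U_{L/F}\to G_\beta(O_r)$ and the trace-form computation, to be routine, since they are essentially already established in the setup preceding the proposition.
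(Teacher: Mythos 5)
Your proposal is correct and follows essentially the same route as the paper, which states this proposition as an immediate consequence of Proposition \ref{prop:regular-semisimple-case-for-u(s)} after the identifications of $G_{\beta}(O_r)$ with the image of $U_{L/F}$ and of $\psi_{\beta}$ via the trace form $T_{L/F}$ set up in the preceding paragraphs. The regularity verification you supply (irreducibility of $\chi_{\beta}$ mod $\frak{p}$ giving regular semisimplicity, transferred from $GL_{2n}$ to $U(S_{\varepsilon})$ via the equivalence recorded in subsection \ref{subsec:smoothly-regular-in-reductive-group}) is exactly the step the paper leaves implicit, and it is a correct filling-in rather than a different argument.
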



Sendai 980-0845, Japan\\
Miyagi University of Education\\
Department of Mathematics

\begin{thebibliography}{99}

\bibitem{Borel1991}
A.Borel : Linear Algebraic Groups (Second Enlarged Edition,
          Springer-Verlag, 1991)

\bibitem{Borel1968}
A.Borel, et alii : Seminar on Algebraic Groups and Related Finite
                   Groups (Lecture Notes in Math. 131,
                   Springer-Verlag, 1970)


\bibitem{Deligne-Lusztig1976}
P.Deligne, G.Lusztig : {\it Representations of reductive groups over
                        finite field}
                       (Ann. of Math. 103 (1976), 103--161

\bibitem{Demazure-Gabriel1970}
M.Demazure, P.Gabriel : Groupes Alg\'ebriques 
                       (Masson, 1970)

\bibitem{SGA-3}
M.Demazure, A.Grothendieck : Sch\'emas en groupes 
                   (Lecture Notes in Math. 151 (1970))

\bibitem{Gerardin1972}
P.G\'erardin : {\it Sur les repr\'esentations du groupe lin\'eaire
              g\'en\'eral sur un corp $p$-adique} 
             (S\'em. Delange-Pisot-Poitou, 14 (1972-1973), exp. 12

\bibitem{Gerardin1977}
P.G\'erardin : {\it Weil representations associated to finite fields} 
               (J.of Algebra, 46 (1977), 54--101)

\bibitem{Green1955}
J.A.Green : {\it The characters of the finite general linear groups} 
            (Trans. Amer. Math. Soc. 80 (1955), 402-447)

\bibitem{Hill1993}
G.Hill : {\it A Jordan decomposition of representations for 
          $GL_n(\mathcal{O})$}
         (Comm. Algebra 21 (1993), 3529-3534)

\bibitem{Hill1994}
G.Hill : {\it On the nilpotent representations of $GL_n(\mathcal{O})$} 
         (Manuscripta Math. 82 (1994), 293-311)

\bibitem{Hill1995-1}
G.Hill : {\it Semisimple and cuspidal characters of
           $GL_n(\mathcal{O})$}
          (Comm. Algebra 23 (1995), 7-25)

\bibitem{Hill1995-2}
G.Hill : {\it Regular elements and regular characters of
          $GL_n(\mathcal{O})$} 
          (J. Algebra 174 (1995), 610-635)


\bibitem{Lou1968}
B.Lou : {\it The centralizer of a regular unipotent element in a
           semisimple algebraic group} 
          (Bull.Amer.Math.Soc. 74 (1968), 1144--1146)



\bibitem{Shalika2004}
A.J.Shalika : {\it Representation of the two by two unimodular group
               over local fields} 
              (Thesis (Ph.D), The Johns Hopkins Univ. 1966, 
               in Contributions to Automorphic Forms, Geometry, and
                Number Theory. Johns Hopkins Univ. Press, 
                (2004),1-38)

\bibitem{Shintani1968}
T.Shintani : {\it On certain square integrable irreducible unitary
              representations of some $\frak{p}$-adic linear groups} 
            (J. Math. Soc. Japan, 20 (1968), 522--565

\bibitem{Silberger1970}
A.J.Silberger : $PGL_2$ over the $p$-adics: its Representations,
                Spherical Functions, and Fourier Analysis 
                (Lecture Notes in Math. vol. 166 (1970), Springer-Verlag)

\bibitem{Springer1971}
T.A.Springer : {\it Generalization of Green's Polynomials}
               (P.S.P.M. vol. 21 (1971),149-153, Amer. Math. Soc.)

\bibitem{Takase2016}
K.Takase : {\it Regular characters of $GL_n(O)$ and Weil
            representations over finite fields} 
           (Journal of Algebra 449 (2016), 184--213)

\bibitem{Tits1979}
J.Tits : {\it Reductive groups over local fields}
         (P.S.P.M. vol. 33 (1979), part I, 29--69, Amer.Math.Soc.)

\end{thebibliography}
\end{document}